\newcommand{\bZ}{\mathbb{Z}}
\newcommand{\bR}{\mathbb{R}}
\newcommand{\bC}{\mathbb{C}}
\newcommand{\bQ}{\mathbb{Q}}
\newcommand{\Fg}{\mathfrak{g}}
\newcommand{\Fh}{\mathfrak{h}}
\newcommand{\be}{\mathbf{e}}
\newcommand{\cO}{\mathcal{O}}
\newcommand{\A}{\mathcal{A}}
\newcommand{\sB}{\mathsf{B}}
\newcommand{\sQ}{\mathsf{Q}}
\newcommand{\vpi}{\varpi}
\newcommand{\ve}{\varepsilon}
\newcommand{\af}{\mathrm{af}}
\DeclareMathOperator{\wend}{end}
\DeclareMathOperator{\wt}{wt}
\DeclareMathOperator{\dn}{down}
\DeclareMathOperator{\Gr}{Gr}
\DeclareMathOperator{\IG}{IG}
\DeclareMathOperator{\Fl}{Fl}
\newcommand{\pair}[2]{\langle #1,\,#2 \rangle}
\newcommand{\edge}[1]{ \xrightarrow{\hspace{2pt}#1\hspace{2pt}} }
\newcommand{\mcr}[1]{\lfloor #1 \rfloor}
\DeclareMathOperator{\QB}{QB}
\newcommand{\vtl}{\vartriangleleft}
\newtheorem{thm}{Theorem}
\newtheorem{conj}[thm]{Conjecture}
\newtheorem{prop}[thm]{Proposition}
\newtheorem{cor}[thm]{Corollary}
\newtheorem{lem}[thm]{Lemma}
\theoremstyle{definition}
\newtheorem{dfn}[thm]{Definition}
\newtheorem{exa}[thm]{Example}
\newtheorem{rema}[thm]{Remark}
\newcommand{\bp}{\textbf{p}}
\DeclareMathOperator{\sign}{sign}
\let\choose\@@choose
\let\@wraptoccontribs\wraptoccontribs
\begin{document}

\title[Quantum $K$-theory Chevalley formulas in the parabolic case]{Quantum $K$-theory Chevalley formulas\\ in the parabolic case}

\author[T.~Kouno]{Takafumi Kouno}
\address[Takafumi Kouno]{Department of Mathematics, Faculty of Science and Engineering, Waseda University,
3-4-1 Okubo, Shinjuku-ku, Tokyo 169-8555, Japan}
\email{t.kouno@aoni.waseda.jp}

\author[C.~Lenart]{Cristian Lenart}
\address[Cristian Lenart]{Department of Mathematics and Statistics, State University of New York at Albany, 
Albany, NY 12222, U.S.A.}
\email{clenart@albany.edu}

\author[S.~Naito]{Satoshi Naito}
\address[Satoshi Naito]{Department of Mathematics, Tokyo Institute of Technology,
2-12-1 Oh-Okayama, Meguro-ku, Tokyo 152-8551, Japan}
\email{naito@math.titech.ac.jp}

\author[D.~Sagaki]{Daisuke Sagaki}
\address[Daisuke Sagaki]{Department of Mathematics, Faculty of Pure and Applied Sciences, University of Tsukuba,
1-1-1 Tennodai, Tsukuba, Ibaraki 305-8571, Japan}
\email{sagaki@math.tsukuba.ac.jp}

\address[W.~Xu]{Department of Mathematics, 460 McBryde Hall, Virginia Tech, 225 Stanger Street, Blacksburg, VA 24061-1026}
\email{weihong@vt.edu}

\contrib[with an appendix by]{Takafumi Kouno, Cristian Lenart, Satoshi Naito, Daisuke Sagaki, and Weihong Xu}

\begin{abstract} We derive cancellation-free Chevalley-type multiplication formulas for the $T$-equivariant quantum $K$-theory ring of Grassmannians of type $A$ and $C$, and also those of two-step flag manifolds of type $A$. They are obtained based on the uniform Chevalley formula  in the $T$-equivariant quantum $K$-theory ring of arbitrary flag manifolds $G/B$, which was derived earlier in terms of the quantum alcove model, by the last three authors.
\end{abstract}

\keywords{Quantum $K$-theory, Chevalley formula, quantum Bruhat graph, quantum alcove model.}
\subjclass[2010]{Primary 05E10; Secondary 14N15, 14M15.}

\maketitle

\section{Introduction}

Y.-P.~Lee defined the (small) {quantum $K$-theory} of a smooth projective variety $X$, denoted by $QK(X)$ (see \cite{leeqkt}, and also \cite{givqkt}). This is a deformation of the ordinary $K$-ring of $X$, analogous to the relation between quantum cohomology and ordinary cohomology. The deformed product is defined in terms of certain generalizations of {Gromov-Witten invariants} (i.e., the structure constants in quantum cohomology), called {quantum $K$-invariants of Gromov-Witten type}. 

Given a simply-connected simple algebraic group $G$ over $\mathbb{C}$, with Borel subgroup $B$, and maximal torus $T \subset B$, we consider the corresponding flag manifold $G/B$, the $T$-equivariant $K$-theory $K_{T}(G/B)$, and the $T$-equivariant quantum $K$-ring $QK_{T}(G/B) := K_T(G/B) \otimes_{{\mathbb Z}[\Lambda]} \bZ[\Lambda][\![Q]\!]$, where $\bZ[\Lambda][\![Q]\!]$ is the ring of formal power series with coefficients in $\bZ[\Lambda]$ in the (Novikov) variables $Q_{i} = Q^{\alpha_i^{\vee}}$, $i \in I$, with $I$ the index set for the simple roots $\alpha_i$ of $G$; $QK_{T}(G/B)$ has a $\bZ[\Lambda][\![Q]\!]$-basis given by the classes $[{\mathcal O}^w]$ of the structure sheaves of the (opposite) Schubert varieties $X^{w} \subset G/B$ indexed by the elements $w$ of the Weyl group $W = \langle s_i := s_{\alpha_i} \mid i \in I \rangle$ of $G$. 
Also, given a (standard) parabolic subgroup $P_{J} \supset B$ corresponding to a subset $J$, we also consider the partial flag manifold $G/P_{J}$, the $T$-equivariant $K$-theory $K_{T}(G/P_{J})$, and the $T$-equivariant quantum $K$-ring $QK_{T}(G/P_{J}) := K_T(G/P_{J}) \otimes_{{\mathbb Z}[\Lambda]} \bZ[\Lambda][\![Q_{K}]\!]$, where $\bZ[\Lambda][\![Q_{K}]\!]$ is the ring of formal power series with coefficients in $\bZ[\Lambda]$ in the (Novikov) variables $Q_{k}$, $k \in K := I \setminus J$; $QK_{T}(G/P_{J})$ has a $\bZ[\Lambda][\![Q_{K}]\!]$-basis given by the (opposite) Schubert classes $[\cO^{y}_{J}]$, for $y \in W^{J}$, where $W^{J}$ denotes the set of minimal-length coset representatives for the cosets in $W/W_{J}$, where $W_{J} := \langle s_j \mid j \in J \rangle \subset W$. 
A Chevalley formula (in cohomology, $K$-theory, or their quantum versions) expresses the Schubert basis expansion of the product between an arbitrary Schubert class and the class of a line bundle, or a Schubert class indexed by a simple reflection (i.e., a divisor class). Having an explicit Chevalley formula in the quantum $K$-ring of an arbitrary flag manifold is important because this algebra is uniquely determined by products with divisor classes~\cite{bcmcfe}, together with its $K_{T}(\mathrm{pt})$-module structure; 
here, $K_{T}(\mathrm{pt}) = R(T)$, the representation ring of $T$, is identified with the group algebra $\bZ[\Lambda]$ of the weight lattice $\Lambda$ of $G$.

A cancellation-free Chevalley formula in the $T$-equivariant quantum $K$-theory of $G/B$ was recently given in~\cite{lnscfs} (see also \cite{lnsccf}); cf. the related conjecture in~\cite{lapawg}. This formula is expressed in terms of the so-called quantum alcove model, which was introduced in~\cite{lalgam}. It generalizes the formula in the $T$-equivariant $K$-theory of $G/B$ in~\cite{lapawg}, which can easily be restricted to the partial flag manifold $G/P_{J}$ for $J \subset I$. However, such a restriction does not work in quantum $K$-theory, because of the lack of functoriality. 
In contrast, we know (see \cite{katqkg}) that for a subset $J \subset I$, the ($\bZ[\Lambda]$-linear) push-forward $(\pi_{J})_{*} : K_{T}(G/B) \to K_{T}(G/P_{J})$, induced by the natural proection $\pi_{J} : G/B \to G/P_{J}$ with $P_{J}$ the (standard) parabolic subgroup of $G$ corresponding to $J$, yields a surjective $\bZ[\Lambda]$-module homomorphism from $QK_{T}^{\mathrm{poly}}(G/B) := K_T(G/B) \otimes_{{\mathbb Z}[\Lambda]} \bZ[\Lambda][Q] \subset QK_{T}(G/B)$ onto $QK_{T}^{\mathrm{poly}}(G/P_{J}) := K_T(G/P_{J}) \otimes_{{\mathbb Z}[\Lambda]} \bZ[\Lambda][Q_{K}]$ such that 
\begin{equation}
\Phi_{J}([\mathcal{O}^{w}] \cdot [\mathcal{O}_{G/B}(-\varpi_k)]) = [\mathcal{O}_{J}^{\lfloor w \rfloor}] \cdot [\mathcal{O}_{G/P_{J}}(-\varpi_k)]
\end{equation}
for $w \in W$ and $k \in K := I \setminus J$, where $\lfloor w \rfloor$ denotes the minimal-length coset representative for the coset $w W_{J}$ in $W/W_{J}$; 
here, $\bZ[\Lambda][Q]$ (resp., $\bZ[\Lambda][Q_{K}]$) is the polynomial ring with coefficients in $\bZ[\Lambda]$ in the (Novikov) variables $Q_i$, $i \in I$, (resp., $Q_k$, $k \in K = I \setminus J$).

Originally, in \cite{katqkg}, the fact above was proved by using the relationship between the $T$-equivariant $K$-group of a (full or partial) semi-infinite flag manifold and the $T$-equivariant quantum $K$-theory of a (full or partial) flag manifold.
Here we should mention that the existence of the surjective $\mathbb{Z}[\Lambda]$-algebra homomorphism $\Phi_{J}$ can also be verified by using the $K$-theoretic analog, conjectured in \cite{llmscnj}, of the Peterson homomorphism ($K$-Peterson homomorphism for short), which is a homomorphism of $\bZ[\Lambda]$-algebras from the $K$-homology of the affine Grassmannian to (the localization, with respect to the positive part $Q^{\vee,+}$ of the coroot lattice $Q^{\vee}$, of) the quantum $K$-ring of $G/P_{J}$; a (new) proof of the existence of the $K$-Peterson homomorphism has been given recently by \cite{newprf}. 
Indeed, as stated in the proof of \cite[Lemma~2.12]{newprf}, under the $K$-Peterson homomorphism (which is a $\mathbb{Z}[\Lambda]$-algebra homomorphism) in the case of the Borel subgroup $B$, the classes of the structure sheaves of Schubert varieties in the affine Grassmannian indexed by the minimal-length coset representatives for $W_{\mathrm{af}}/W$, with $W_{\mathrm{af}}$ the affine Weyl group and $W$ the finite Weyl group, are sent injectively to the corresponding (opposite) Schubert classes in $QK_{T}(G/B)$ multiplied by explicit monomials in the Novikov variables correspondong to anti-dominant coroots in $- Q^{\vee,+}$. Hence, by composing the inverse of the $K$-Peterson homomorphism in the case of $B$ with the $K$-Peterson homomorphism (which is also a $\mathbb{Z}[\Lambda]$-algebra homomorphism) in the case of $P_{J} \supset B$, we obtain the desired surjective $\bZ[\Lambda]$-algebra homomorphism $\Phi_{J}$; 
here we use the fact that all the (opposite) Schubert classes will lie in the image of the $K$-Peterson homomorphism in the case of $B$ if we multiply them by a monomial in the Novikov variables corresponding to a (fixed) regular anti-dominant coroot in $- Q^{\vee,+}$.
The details of these arguments are explained in Appendix~\ref{sec:A}. 

In this paper, on the basis of the fact above, we derive cancellation-free Chevalley formulas in the $T$-equivariant quantum $K$-ring $QK_T(G/P_{J})$ of the partial flag manifold $G/P_{J}$, where $P_{J} \supset B$ is the (standard) parabolic subgroup of $G$ corresponding to $J \subset I$ in the following two cases: (i) $G$ is of type $A$ or $C$ and $J = I \setminus \{ k \}$ for $k \in I$; (ii) $G$ is of type $A$ and $J = I \setminus \{ k_1, k_2 \}$ for $k_1, k_2 \in I$ with $k_1 \not= k_2$.
More precisely, the mentioned Chevalley formulas express the quantum multiplication in $QK_T(G/P_J)$ with the class of the line bundle associated to the anti-dominant fundamental weight $-\varpi_k$ for $k \in I \setminus J$. 
Our strategy is the following: start with the Chevalley formula for $QK_{T}^{\mathrm{poly}}(G/B) \subset QK_T(G/B)$ in~\cite{lnscfs}; apply the $\bZ[\Lambda]$-module surjection $\Phi_{J} : QK_{T}^{\mathrm{poly}}(G/B) \rightarrow QK_{T}^{\mathrm{poly}}(G/P_J)$ (which respects quantum multiplications) above; perform all cancellations, which arise via a sign-reversing involution. 
In addition, as an application of our Chevalley formulas, we prove the positivity property of certain structure constants of the quantum $K$-ring of a Grassmannian of type $C$ and that of a two-step flag manifold of type $A$, 
as well as that for an arbitrary full flag manifold. 

The resulting Chevalley formulas for Grassmannians of types $A$ and $C$ and also those for two-step flag manifolds of type $A$ are no longer uniform, and they might also involve several cases.
This fact validates our approach of deriving them from the uniform formula for $G/B$. Note that, in many cases, the opposite approach works better, namely the formulas for Grassmannians are obtained first, because they are easier. 

We now compare our work with two related papers. In~\cite{knscfa}, a quantum $K$-theory Chevalley formula is given in $QK_T(G/P_J)$, where $J=I\setminus\{k\}$, for the line bundle associated to $-\varpi_k$, assuming that $\varpi_k$ is a minuscule fundamental weight in type $A$, $D$, $E$, or $B$. The formulas are expressed in terms of the quantum Bruhat graph (on which the quantum alcove model is based). The approach in the present paper is simpler, and has the advantage of being easier to be extended to other partial flag manifolds; 
in fact, we also obtain a quantum $K$-theory Chevalley formula for two-step flag manifolds of type $A$. 
On another hand, the Chevalley formulas in~\cite{bcmcfe} for cominuscule varieties are of a different nature than the corresponding cases of the formulas in this paper. Indeed, the role of the quantum Bruhat graph is not transparent in~\cite{bcmcfe}.

\subsection*{Acknowledgements.}
T.K. was partly supported by JSPS Grant-in-Aid for Scientific Research 20J12058 and 22J00874. 
C.L. was partly supported by the NSF grant DMS-1855592.
S.N. was partly supported by JSPS Grant-in-Aid for Scientific Research (B) 16H03920. 
D.S. was partially supported by JSPS Grant-in-Aid for Scientific Research (C) 15K04803 and 19K03415.

\section{Background}

Consider a simply-connected simple algebraic group $G$ over $\mathbb{C}$, with Borel subgroup $B$, and maximal torus $T$. 
Let $\mathfrak{g}$ be the corresponding finite-dimensional simple Lie algebra over $\mathbb{C}$, and $W$ its Weyl group, with length function denoted by $\ell(\cdot)$. 
Let $\Phi$, $\Phi^{+}$, and $\Phi^{-}$ be 
the set of roots, positive roots, and negative roots of $\mathfrak{g}$, respectively, and let $\Lambda$ be the corresponding weight lattice. Let $\alpha_i$, $i\in I$, be the simple roots, $\Delta := \{\alpha_{i} \mid i \in I\}$ the set of all simple roots, $\theta$ the highest root, and $\alpha^\vee$ the coroot associated with the root $\alpha$. The reflection corresponding to $\alpha$ is denoted, as usual, by $s_\alpha$, and we let $s_i:=s_{\alpha_i}$, $i\in I$, be  the simple reflections. Set $\rho:=(1/2) \sum_{\alpha \in \Phi^{+}} \alpha$. 

Let $J$ be a subset of $I$. 
We denote by $W_J:= \langle s_{i} \mid i \in J \rangle$ the parabolic subgroup of $W$ corresponding to $J$, 
and we identify $W/W_J$ with the corresponding set of minimal coset representatives, denoted by $W^J$; 
note that if $J=\emptyset$, then $W^{J}=W^{\emptyset}$ is identical to $W$. 
For $w \in W$, we denote by $\mcr{w}=\mcr{w}^{J} \in W^{J}$ 
the minimal coset representative for the coset $w W_{J}$ in $W/W_{J}$.

\subsection{The quantum Bruhat graph}\label{sec:qbg}
We start with the definition of this graph, which plays a fundamental role in our combinatorial model.

\begin{dfn}
The quantum Bruhat graph $\QB(W)$ is 
the $\Phi^{+}$-labeled
directed graph whose vertices are the elements of $W$, and 
whose directed edges are of the form: $w \edge{\beta} v$ 
for $w,v \in W$ and $\beta \in \Phi^{+}$ 
such that $v= {ws_{\beta}}$, and such that either of 
the following holds: 
(i) $\ell(v) = \ell (w) + 1$; 
(ii) $\ell(v) = \ell (w) + 1 - 2 \pair{\rho}{\beta^{\vee}}$.
An edge satisfying (i) (resp., (ii)) is called a Bruhat (resp., quantum) edge. 
\end{dfn}

In \cite{bfpmbo}, it is proved that the quantum Bruhat graph $\QB(W)$ has the following property (called the \emph{shellability}): for all $x, y \in W$, there exists a unique directed path from in $\QB(W)$ from $x$ to $y$ whose edge labels are increasing with respect to an arbitrarily fixed reflection order on $\Phi^{+}$. 

We recall an explicit description of the edges of the quantum Bruhat graphs of types $A$ and $C$. These results generalize the well-known criteria for covers of the Bruhat order in these cases~\cite{babccg}. 

In type $A_{n-1}$, the Weyl group elements (i.e., permutations) $w\in W(A_{n-1})=S_n$ are written in one-line notation $w=[w(1), \ldots, w(n)]$. For simplicity, we use the same notation $(i,j)$ with $1\le i<j\le n$ for the root $\alpha_{ij}$ and the reflection $s_{\alpha_{ij}}$, which is the transposition $t_{ij}$ of $i$ and $j$. We have $\theta=(1,n)$. We recall a criterion for the edges of the type $A_{n-1}$ quantum Bruhat graph. We need the circular order $\prec_i$ on $[n]$ starting at $i$, namely $i\prec_i i+1\prec_i\cdots \prec_i n\prec_i 1\prec_i\cdots\prec_i i-1$. It is convenient to think of this order in terms of the numbers $1,\ldots,n$ arranged clockwise on a circle, in this order. We make the convention that, whenever we write $a\prec b\prec c\prec\cdots$; i.e., the leftmost of the chain $a\prec b\prec c\prec\cdots$ we are writing is $a$, we refer to the circular order $\prec=\prec_a$.
	  
	\begin{prop}[\cite{Lenart}]
		\label{prop:quantum_bruhat_order_type_A}
	For $w\in S_n$ and $1\leq i<j\leq n$, we have an edge $w \stackrel{(i,j)}{\longrightarrow} w(i,j)$ if and only if 
	there is no $k$ such that $i<k<j$ and $w(i) \prec w(k) \prec w(j)$.
	\end{prop}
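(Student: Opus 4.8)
The plan is to reduce the quantum Bruhat graph edge criterion to two separate criteria — one for Bruhat edges and one for quantum edges — and then verify the circular-order condition in each case. First I would recall the classical fact (from~\cite{babccg}) that in $S_n$, a Bruhat cover $w \lessdot w(i,j)$ occurs if and only if $w(i) < w(j)$ and there is no $k$ with $i < k < j$ and $w(i) < w(k) < w(j)$; equivalently, $\ell(w(i,j)) = \ell(w) + 1$ exactly under this condition. Observe that when $w(i) < w(j)$, the circular order $\prec_{w(i)}$ starting at $w(i)$ restricts to the usual linear order on the interval $[w(i), w(j)]$, so the condition ``no $k$ with $i<k<j$ and $w(i) \prec w(k) \prec w(j)$'' is literally the classical no-$k$ condition. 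This handles the Bruhat edges.

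Next I would treat the quantum edges. For a transposition $\beta = (i,j)$ with $i < j$, one computes $\pair{\rho}{\beta^\vee} = j - i$, so condition (ii) in the definition reads $\ell(w(i,j)) = \ell(w) + 1 - 2(j-i)$. I would use the standard inversion count: $\ell(w(i,j)) - \ell(w) = \sgn$-weighted count of indices $k$ strictly between $i$ and $j$ together with the pair $(i,j)$ itself, and show that the quantum condition (ii) is equivalent to $w(i) > w(j)$ together with: there is no $k$ with $i < k < j$ such that $w(k)$ lies \emph{outside} the interval $[w(j), w(i)]$ — i.e., either $w(k) > w(i)$ or $w(k) < w(j)$. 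The key combinatorial identity is that, writing $m$ for the number of such ``outside'' indices $k$, one has $\ell(w(i,j)) - \ell(w) = 2m - 2(j - i) + 1$, so (ii) holds iff $m = 0$. Finally, I would observe that when $w(i) > w(j)$, the circular order $\prec_{w(i)}$ makes the chain $w(i) \prec \cdots \prec n \prec 1 \prec \cdots \prec w(j)$, so the condition ``$w(i) \prec w(k) \prec w(j)$'' for $i < k < j$ says precisely that $w(k)$ is one of these ``outside'' values; thus $m = 0$ is exactly the stated no-$k$ condition. Combining the two cases — noting that $w(i) < w(j)$ forces a Bruhat edge and $w(i) > w(j)$ forces (if anything) a quantum edge, and the two circular-order statements glue into the single uniform statement in the proposition — completes the argument.

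The main obstacle I anticipate is the bookkeeping in the quantum case: one must carefully verify the inversion-count identity $\ell(w(i,j)) - \ell(w) = 2m - 2(j-i) + 1$ by classifying each intermediate index $k \in \{i+1, \dots, j-1\}$ according to where $w(k)$ falls relative to $w(j)$ and $w(i)$ (three zones: below $w(j)$, between $w(j)$ and $w(i)$, above $w(i)$), tracking how the transposition $(i,j)$ changes the number of inversions involving $k$, and confirming the contribution of the pair $(i,j)$ itself. This is a finite case check but must be done with care about strict versus non-strict inequalities. An alternative, cleaner route would be to cite the type-$A$ specialization directly from the reference~\cite{Lenart} already invoked in the statement, but since the proposition \emph{is} that reference's result, I would instead give the self-contained inversion-count argument sketched above, as it also serves as a model for the type-$C$ analogue used later in the paper.
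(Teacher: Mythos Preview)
Your proof proposal is correct. The inversion-count identity $\ell(w(i,j))-\ell(w)=2m-2(j-i)+1$ in the quantum case (with $w(i)>w(j)$ and $m$ the number of intermediate $k$ with $w(k)\notin[w(j),w(i)]$) is exactly right, and the translation into the circular-order condition is accurate in both the Bruhat and quantum cases.

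As for comparison with the paper: there is nothing to compare against. The paper does \emph{not} prove this proposition; it is stated with attribution to~\cite{Lenart} and used as a black box throughout. Your self-contained argument via the three-zone classification of intermediate values and the resulting inversion count is the standard direct verification, and it is precisely the kind of argument one finds in the original reference. Your closing remark that this computation ``serves as a model for the type-$C$ analogue'' is apt: the paper likewise imports Proposition~\ref{prop:quantum_bruhat_order_type_C} from the same source without proof.
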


If there is a position $k$ as above, we say that the transposition of $w(i)$ and $w(j)$ {straddles} $w(k)$. We also let $w[i,j]:=[w(i), w(i+1), \cdots, w(j)]$. We continue to use this terminology and notation for the other classical types. 

The Weyl group of type $C_n$ is the group of signed permutations. These are bijections $w$ from 
$[\overline{n} ]:= 
\{ 1 < 2 < \dots < n < \overline{n} < \overline{n-1} < \dots < \overline{1} \}$ to 
$[\overline{n}]$ satisfying $w(\overline{\imath}) = \overline{w(i)}$.
Here $\overline{\imath}$ is viewed as $-i$, so $\overline{\overline{\imath}}= i$,
$|\overline{\imath}| = i$, and $\sign(\overline{\imath})=-1$.
We use both the window notation $w=[w(1), \ldots, w(n)]$ and the full one-line notation
$w=[w(1), \ldots, w(n), w(\overline{n}), \ldots, w(\overline{1})]$ for signed permutations.
For simplicity, given $1 \leq i < j \leq n$, we denote by $(i,j)$ the root 
$\varepsilon_i - \varepsilon_j$ and the corresponding reflection, which is identified
with the composition of transpositions $t_{ij}t_{\overline{\jmath \imath}}$.
Similarly,  for $1\leq i<j \leq n$, we denote by $(i,\overline{\jmath})=(j,\overline{\imath})$ the root 
$\varepsilon_i + \varepsilon_j$ and the corresponding reflection, which is 
identified with the composition of transpositions 
$t_{i\overline{\jmath}} t_{j\overline{\imath}}$.
Finally, we denote by $(i,\overline{\imath})$ the root $2\varepsilon_i$ and the 
corresponding reflection, which is identified with the transposition $t_{i\overline{\imath}}$. We have $\theta=(1,\overline{1})$. 

We now recall the criterion for the edges of the type $C_n$ quantum Bruhat graph. We need the circular order $\prec_i$ on $[\overline{n}]$ starting at $i$, which is 
defined similarly to the circular order on $[n]$, by thinking of the numbers $1,2,\ldots,n,\overline{n},\overline{n-1},\ldots,\overline{1}$ arranged clockwise on a circle, in this order. 
We make the same convention as above that, whenever we write 
$a \prec b \prec c \prec \cdots,$  we refer to the circular order $\prec = \prec_a$.

\begin{prop}[\cite{Lenart}] Let $w\in W(C_n)$ be a signed permutation. \begin{enumerate}
		\item[{\rm (1)}] 
			 Given $1 \leq i < j \leq n$, we have an edge
			$w \stackrel{(i,j)}{\longrightarrow} w(i,j)$ if and only if
			there is no $k$ such that 
			$i < k < j$ and $w(i) \prec w(k) \prec w(j)$.
		\item[{\rm (2)}] 
			 Given $1 \leq i < j \leq n$, we have an edge 
			$w\stackrel{(i,\overline{\jmath})}{\longrightarrow}$ if and
			only if $w(i) < w(\overline{\jmath})$,  
			$\sign(w(i)) = \sign(w(\overline{\jmath}))$, 
			and there is no $k$ such that $i<k<\overline{\jmath}$ and 
			$w(i) < w(k) < w(\overline{\jmath})$.
		\item[{\rm (3)}] 
			 Given  $1 \leq i \leq n$, we have an edge 
			$w \stackrel{(i,\overline{\imath})}{\longrightarrow} 
			w(i,\overline{\imath})$ if and only if there is no $k$ such
			that $i < k < \overline{\imath}$ (or, equivalently, $i<k\leq n$) and 
			$w(i) \prec w(k) \prec w(\overline{\imath})$.
	\end{enumerate}
	\label{prop:quantum_bruhat_order_type_C}
\end{prop}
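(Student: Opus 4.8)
The plan is to verify the two defining conditions of $\QB(W(C_n))$ directly from the combinatorics of signed permutations, handling the three families of roots $\beta=(i,j)$, $\beta=(i,\overline\jmath)$, and $\beta=(i,\overline\imath)$ separately. I will use the standard reduction that, for any $w$ and any $\beta\in\Phi^+$, the quantity $\ell(ws_\beta)-\ell(w)$ is odd and lies between $1-2\pair{\rho}{\beta^\vee}$ and $2\pair{\rho}{\beta^\vee}-1$; consequently $w\edge{\beta}ws_\beta$ is an edge of $\QB(W)$ exactly when $\ell(ws_\beta)-\ell(w)$ equals $1$ (a Bruhat edge) or equals its minimal possible value $1-2\pair{\rho}{\beta^\vee}$ (a quantum edge). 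Thus everything reduces to an exact evaluation of $\ell(ws_\beta)-\ell(w)$, for which I would use the classical length formula for $W(C_n)$ written in terms of the full one-line notation $u_w=w(1)\cdots w(n)w(\overline n)\cdots w(\overline 1)$, namely $\ell(w)=\tfrac12\bigl(\mathrm{Inv}(u_w)+\mathrm{neg}(w)\bigr)$, where $\mathrm{Inv}$ counts inversions with respect to the total order on $[\overline n]$ and $\mathrm{neg}(w)=|\{a\in[n]:w(a)<0\}|$. The relevant values of $\pair{\rho}{\beta^\vee}=\hgt(\beta^\vee)$ are $j-i$, $2n+2-i-j$, and $n-i+1$ in the three cases, respectively.

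The computation is organized around the observation that right multiplication by $s_\beta$ acts on $u_w$ as a product of transpositions of entries: for $\beta=(i,j)$ it transposes positions $i\leftrightarrow j$ and $\overline\imath\leftrightarrow\overline\jmath$; for $\beta=(i,\overline\imath)$ it is the single transposition $i\leftrightarrow\overline\imath$; and for $\beta=(i,\overline\jmath)$ it transposes $i\leftrightarrow\overline\jmath$ and $j\leftrightarrow\overline\imath$. For the transposition of two positions of $u_w$ with some positions strictly between them, the change in $\mathrm{Inv}$ is a base value $\pm1$ plus twice a count of the in-between positions $k$ whose value $u_w(k)$ falls in a prescribed arc determined by the two transposed values, and the change in $\mathrm{neg}$ (which records the sign flips at window positions) is controlled by the same data. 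Carrying this through, one finds in each case that $\ell(ws_\beta)-\ell(w)$ equals either $1$ (the ``increasing'' regime) or $1-2\pair{\rho}{\beta^\vee}$ (the ``decreasing'' regime) plus a positive multiple of the nonnegative integer $m=m(w,\beta)$ counting the positions $k$ that straddle the move in the precise sense appearing in the proposition; hence $w\edge{\beta}ws_\beta$ is an edge if and only if $m=0$, which is exactly the stated non-straddling condition. In case $\beta=(i,j)$ the term $\mathrm{neg}$ is unchanged and the barred and unbarred halves of $u_w$ contribute equally to the change in $\mathrm{Inv}$, so the whole computation collapses to the one underlying Proposition \ref{prop:quantum_bruhat_order_type_A}; there the role of the circular order $\prec_{w(i)}$ becomes transparent, as it is precisely the device that merges the ``increasing'' interval $(w(i),w(j))$ and its complementary ``decreasing'' version into one uniform condition. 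Case $\beta=(i,\overline\imath)$ is analogous, with $\overline\imath$ playing the role of the second index and $u_w(\overline\imath)=\overline{w(i)}$; here the symmetry $u_w(\overline a)=\overline{u_w(a)}$ lets one restrict the straddling test to the unbarred positions $i<k\le n$, as in the statement.

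The case $\beta=(i,\overline\jmath)$ --- part (2) --- is where the real work lies, and I expect it to be the main obstacle. Now $s_\beta$ transposes $w(i)\leftrightarrow\overline{w(j)}$ and $w(j)\leftrightarrow\overline{w(i)}$, so both the inversion count and the negativity count change in a way that genuinely mixes the positive and negative halves of $u_w$, and the two transpositions interact through the positions lying strictly between $i$ and $\overline\jmath$; in particular the naive ``count the positions in between'' bound on the length change is not attained, and one must instead partition those positions --- both the unbarred ones $i<k\le n$ and the barred ones $\overline n,\ldots,\overline{j+1}$ --- according to the sign and magnitude of $w(k)$ relative to $w(i)$ and $w(j)$, and separately account for the four ``corner'' pairs among the positions $i,j,\overline\jmath,\overline\imath$ together with the $\mathrm{neg}$ contributions of the (up to two) positions whose sign flips. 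It is exactly this accounting that forces the extra conditions ``$w(i)<w(\overline\jmath)$ and $\sign(w(i))=\sign(w(\overline\jmath))$'': they single out the increasing regime for this root and rule out the spurious near-quantum configurations, and they are the reason a plain linear order, rather than a circular one, appears in this case. Checking that no sign is misplaced in this bookkeeping, and that the degenerate sub-cases (equal magnitudes $|w(i)|=|w(j)|$, or test positions coinciding with $i$ or $j$) are handled correctly, is the delicate point; once it is settled, comparison with the reduction of the first paragraph completes the proof.
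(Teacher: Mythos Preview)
The paper does not supply a proof of this proposition: it is quoted verbatim from \cite{Lenart} and used as a black box. So there is no ``paper's own proof'' to compare against; your task here is really to reproduce (a version of) the argument in the cited source.

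Your plan is the natural one and is essentially how the result is established in \cite{Lenart}: compute $\ell(ws_\beta)-\ell(w)$ exactly via a length formula for $W(C_n)$, identify the extremal values $1$ and $1-2\pair{\rho}{\beta^\vee}$, and show that the deviation from an extremum is a positive multiple of the ``straddling count'' $m$. The coroot heights you list are correct, and the type $C$ length formula $\ell(w)=\tfrac12\bigl(\mathrm{Inv}(u_w)+\mathrm{neg}(w)\bigr)$ is a valid starting point. Cases (1) and (3) go through as you indicate, essentially by the same mechanism as in type $A$.

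For case (2) your outline is right in spirit, but be aware of two things. First, the ``degenerate sub-case $|w(i)|=|w(j)|$'' you flag cannot occur for $i\ne j$, since a signed permutation has distinct absolute values in the window; you can drop that worry. Second, the statement in (2) characterizes only the \emph{Bruhat} edges for the root $\varepsilon_i+\varepsilon_j$ (indeed $w(i)<w(\overline\jmath)$ forces $\ell$ to increase), so your bookkeeping should not aim to detect quantum edges here at all: one checks that the quantum extremum $1-2\pair{\rho}{\beta^\vee}$ is never attained for this root, which is exactly why the hypotheses $w(i)<w(\overline\jmath)$ and $\sign(w(i))=\sign(w(\overline\jmath))$ replace the circular-order device. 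Once you organize the inversion count around the partition of the intermediate positions by the sign and the relative size of $w(k)$, the computation closes cleanly; this is the same case analysis carried out in \cite{Lenart}.
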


\subsection{The quantum alcove model}\label{sec:lamch}
We need basic notions related to the combinatorial model known as the \emph{alcove model}, which was defined in~\cite{lapawg}. In particular, we need the notion of a \emph{$\lambda$-chain of roots}, 
where $\lambda$ is a weight. 
In this section, we recall definitions of these notions from \cite{lapawg}. 

Let $\Lambda$ be the weight lattice of $G$ and set $\Fh^{\ast}_{\bR} := \Lambda \otimes_{\bZ} \bR$. 
For $\alpha \in \Phi$ and $k \in \bZ$, we define a hyperplane $H_{\alpha, k}$ by $H_{\alpha, k} := \{ \xi \in \Fh^{\ast}_{\bR} \mid \pair{\xi}{\alpha^{\vee}} = k \}$. We denote by $s_{\beta, k}$, $\beta \in \Phi$ and $k \in \bZ$, the reflection with respect to $H_{\alpha, k}$. 
Then, an \emph{alcove} is defined to be a connected component of the space
\begin{equation}
\Fh^{\ast}_{\bR} \setminus \bigcup_{\alpha \in \Phi, \ k \in \bZ} H_{\alpha, k}. 
\end{equation}
If two alcoves $A$ and $B$ have a common wall, then $A$ and $B$ are said to be \emph{adjacent}. 
Let us take adjacent alcoves $A$ and $B$. If the common wall of $A$ and $B$ is contained in a hyperplane $H_{\alpha, k}$ for some $\alpha \in \Phi$ and $k \in \bZ$, and the vector $\alpha$ points a direction from $A$ to $B$, then we write $A \xrightarrow{\alpha} B$. 
We define a specific alcove $A_{\circ}$, called \emph{the fundamental alcove}, by 
\begin{equation}
A_{\circ} := \{ \xi \in \Fh^{\ast}_{\bR} \mid \text{$\pair{\xi}{\alpha^{\vee}} \ge 0$ for all $\alpha \in \Phi^{+}$} \}. 
\end{equation}
In addition, for $\lambda \in \Lambda$, we define an alcove $A_{\lambda}$ by $A_{\lambda} := A_{\circ} + \lambda = \{ \xi + \lambda \mid \xi \in A_{\circ} \}$. 

\begin{dfn}[{\cite[Definitions~5.2, 5.4]{lapawg}}]
\begin{enumerate}
\item An \emph{alcove path} is a sequence $(A_{0}, A_{1}, \ldots, A_{m})$ of alcoves such that for each $0 \le k \le m-1$, $A_{k}$ and $A_{k+1}$ are adjacent. If an alcove path $\Pi = (A_{0}, \ldots, A_{m})$ is shortest among all alcove paths from $A_{0}$ to $A_{m}$, we say that $\Pi$ is \emph{reduced}. 
\item Let $\lambda \in \Lambda$. A \emph{$\lambda$-chain of roots} is a sequence $\Gamma = (\beta_{1}, \ldots, \beta_{m})$ of roots such that there exists an alcove path $\Pi = (A_{\circ} = A_{0}, \ldots, A_{m} = A_{-\lambda})$ such that 
\begin{equation}
A_{0} \xrightarrow{-\beta_{1}} A_{1} \xrightarrow{-\beta_{2}} \cdots \xrightarrow{-\beta_{m}} A_{m}. 
\end{equation}
If $\Pi$ is reduced, then we also say that $\Gamma$ is \emph{reduced}. 
\end{enumerate}
\end{dfn}

Let $\lambda \in \Lambda$. Take a $\lambda$-chain $\Gamma = (\beta_{1}, \ldots, \beta_{m})$ and corresponding alcove path $(A_{0}, \ldots, A_{m})$. Set $r_i:=s_{\beta_i}$, $i = 1, \ldots, m$. Below, we present an explicit description of the chains of roots corresponding to the anti-dominant fundamental weights in the classical types, i.e., $\lambda=-\varpi_k$. 

We also need to recall the more general \emph{quantum alcove model}~\cite{lalgam}. We refer to~\cite[Section~3.2]{lnscfs} for more details. 
In the next definition, we use the following notation: for $\beta \in \Phi$, 
\begin{equation}
|\beta| := \begin{cases}
\beta & \text{if $\beta \in \Phi^{+}$}, \\ 
-\beta & \text{if $\beta \in \Phi^{-}$}. 
\end{cases}
\end{equation}
\begin{dfn}[\cite{lalgam}]
\label{def:admissible}
	A subset 
	$A=\left\{ j_1 < j_2 < \cdots < j_s \right\}$ of $[m]:=\{1,\ldots,m\}$ (possibly empty)
 	is a $w$-\emph{admissible subset} if
	we have the following directed path in the quantum Bruhat graph $\QB(W)$:
	\begin{equation}
	\label{eqn:admissible}
	 \Pi(w,A):\;\;\;\;w\xrightarrow{|\beta_{j_1}|} w r_{{j_1}} 
	\xrightarrow{|\beta_{j_2}|}  wr_{{j_1}}r_{{j_2}} 
	\xrightarrow{|\beta_{j_3}|}  \cdots 
	\xrightarrow{|\beta_{j_s}|}  wr_{{j_1}}r_{{j_2}} \cdots r_{{j_s}}=:\wend(w,A)\,.
	\end{equation}
\end{dfn}

We denote by $A^-$ the subset of $A$ corresponding to quantum steps in $\Pi(w,A)$. Let $\A(w,\Gamma)$ be the collection of all $w$-admissible subsets corresponding to the $\lambda$-chain $\Gamma$, and $\A_{\lessdot}(w,\Gamma)$ its subset consisting of all those $A$ with $A^-=\emptyset$ (i.e., $\Pi(w,A)$ is a saturated chain in Bruhat order). For convenience, we identify an admissible subset $J=\{j_1<\cdots<j_s\}$ with the corresponding sequence of roots $\{\beta_{j_1},\ldots,\beta_{j_s}\}$ in the $\lambda$-chain $\Gamma$ (in case of multiple occurrences of a root in $\Gamma$, we specify which one is considered). 
Also, we define statistics $\dn(w, A)$ for $A \in \A(w, \Gamma)$ as follows: 
\begin{equation}
\dn(w, A) := \sum_{j \in A^{-}} |\beta_{j}|^{\vee}. 
\end{equation}
In addition, let $H_{\beta_{j}, -l_{j}}$, $j = 1 \ldots m$, be the hyperplane containing the common wall of $A_{j-1}$ and $A_{j}$.
Then we define $\wt(w, A)$ by 
\begin{equation} 
\wt(w, A) := -ws_{\beta_{j_{1}}, -l_{j_{1}}} \cdots s_{j_{s}, -l_{j_{s}}} (-\lambda). 
\end{equation}

We use the same notation as in Section~\ref{sec:qbg}, and we start with type $A_{n-1}$. It is proved in \cite[Corollary~15.4]{lapawg} that, for any $k=1,\ldots,n-1$,
we have the following reduced $(-\varpi_k)$-chain of roots, denoted by $\Gamma(k)$
(note that all the roots in this $(-\varpi_k)$-chain are negated for simplicity of notation, and hence they are all positive roots): 
\begin{equation}\label{omegakchain}\begin{array}{lllll}
(&\!\!\!\!(1,n),&(1,n-1),&\ldots,&(1,k+1)\,,\\
&\!\!\!\!(2,n),&(2,n-1),&\ldots,&(2,k+1)\,,\\
&&&\ldots\\
&\!\!\!\!(k,n),&(k,n-1),&\ldots,&(k,k+1)\,\,)\,.
\end{array}\end{equation}

In type $A_{n-1}$, we have the (Dynkin) diagram automorphism 
\begin{equation}
\omega: [n-1] \rightarrow [n-1], \quad l \mapsto n - l. 
\end{equation}
By applying the diagram automorphism $\omega$ to $\Gamma(n-k)$, we obtain another reduced $(-\varpi_{k})$-chain (with all the roots negated), denoted by $\Gamma^{\ast}(k)$: 
\begin{equation}\label{omegakchain*}\begin{array}{lllll}
(&\!\!\!\!(1,n),&(2,n),&\ldots,&(k,n)\,,\\
&\!\!\!\!(1,n-1),&(2,n-1),&\ldots,&(k,n-1)\,,\\
&&&\ldots\\
&\!\!\!\!(1,k+1),&(2,k+1),&\ldots,&(k,k+1)\,\,)\,.
\end{array}\end{equation}

In type $C_n$, let 
\begin{equation}\label{omegachain-c}\Gamma(k):=\Gamma_2'\cdots\Gamma_k'\Gamma_1(k)\cdots\Gamma_k(k)\,,\end{equation}
where
\begin{align}
&\;\:\Gamma_j':=((1,\overline{\jmath}),(2,\overline{\jmath}),\ldots,(j-1,\overline{\jmath}))\,,\nonumber \\
&\begin{array}{llllll}\Gamma_j(k):=(\!\!\!\!\!&(1,\overline{\jmath}),&(2,\overline{\jmath}),&\ldots,&(j-1,\overline{\jmath}),\\
&(j,\overline{k+1}),&(j,\overline{k+2}),&\ldots,&(j,\overline{n}),\\&(j,\overline{\jmath}),\\ 
&(j,n),&(j,n-1),&\ldots,&(j,k+1)\,)\,.\end{array}\label{omegachain2-c}\end{align}
It is proved in \cite[Lemma~4.1]{lenhhl} that $\Gamma(k)$ is a reduced $(-\varpi_k)$-chain (with all the roots negated), for $1\le k\le n$.

\subsection{The quantum $K$-theory of flag manifolds}\label{sec:qk}
In order to describe the (small) $T$-equivariant quantum $K$-ring $QK_T(G/B)$, for the finite-dimensional flag manifold $G/B$, we associate a variable $Q_i$ to each simple coroot $\alpha_i^{\vee}$, 
and set $\bZ[Q]:=\bZ[Q_{i} \mid i \in I]$, $\bZ[\![Q]\!] := \bZ[\![Q_{i} \mid i \in I]\!]$; 
for each $\xi= \sum_{i \in I} d_i \alpha_i^{\vee}$ in $Q^{\vee,+}$, 
we set $Q^\xi:= \prod_{i \in I} Q_i^{d_i}$. 
Also, we set $\bZ[\Lambda][Q]:=\bZ[\Lambda]\otimes_{{\mathbb Z}} \bZ[Q]$, 
$\bZ[\Lambda][\![Q]\!]:=\bZ[\Lambda]\otimes_{{\mathbb Z}} \bZ[\![Q]\!]$, 
where $\mathbb{Z}[\Lambda]$ is the group algebra of the weight lattice $\Lambda$ of $G$, 
and is identified with the representation ring $R(T) = K_{T}(\mathrm{pt})$.
Following \cite{leeqkt} (and also \cite{givqkt}), we define the quantum $K$-ring $QK_T(G/B)$ to be the $\bZ[\Lambda][\![Q]\!]$-module
$K_T(G/B) \otimes_{{\mathbb Z}[\Lambda]} \bZ[\Lambda][\![Q]\!]$, 
equipped with the quantum product $\star$ given in terms of quantum $K$-invariants of Gromov-Witten type. 
The quantum $K$-ring $QK_T(G/B)$ has a $\mathbb{Z}[\Lambda][\![Q]\!]$-basis given by the classes $[{\mathcal O}^w]$ of the structure sheaves of the (opposite) Schubert varieties $X^{w} \subset G/B$ of codimension $\ell(w)$, for $w \in W$. 

We consider the maximal (standard) parabolic subgroup of $P_J \supset B$ of $G$ corresponding to the subset $J := I \setminus \{ k \}$, for some $k\in I$.
The $T$-equivariant quantum $K$-ring $QK_{T}(G/P_{J})$ of the partial flag manifold $G/P_{J}$ is defined as $K_{T}(G/P_{J}) \otimes_{\mathbb{Z}[\Lambda]} \mathbb{Z}[\Lambda][\![Q_{k}]\!]$, 
where $K_{T}(G/P_{J})$ is the $T$-equivariant $K$-theory of $G/P_{J}$, and $\mathbb{Z}[\Lambda][\![Q_{k}]\!]$ is the ring of formal power series with coefficients in $\bZ[\Lambda]$ in the single (Novikov) variable $Q_{k} = Q^{\alpha_{k}^{\vee}}$ corresponding to the simple coroot $\alpha_{k}^{\vee}$. The (opposite) Schubert classes $[\mathcal{O}_{J}^{y}]$, for $y \in W^{J}$, form a $\mathbb{Z}[\Lambda][\![Q_k]\!]$-basis. 

We also consider the (standard) parabolic subgroup $P_J \supset B$ of $G$ corresponding to the subset $J := I \setminus \{k_{1}, k_{2}\}$, for some $k_{1}, k_{2} \in I$ with $k_{1} \not= k_{2}$. 
In this case, the $T$-equivariant quantum $K$-ring $QK_{T}(G/P_{J})$ is defined as $K_{T}(G/P_{J}) \otimes_{\mathbb{Z}[\Lambda]} \mathbb{Z}[\Lambda][\![Q_{k_{1}}, Q_{k_{2}}]\!]$, 
where $\bZ[\Lambda][\![Q_{k_1}, Q_{k_2}]\!]$ is the ring of formal power series with coefficients in $\bZ[\Lambda]$ in the two (Novikov) variables $Q_{k_1}, Q_{k_2}$. 
As in the maximal parabolic case, the (opposite) Schubert classes $[\cO^{y}_{J}]$, for $y \in W^{J}$, form a $\mathbb{Z}[\Lambda][\![Q_{k_{1}}, Q_{k_{2}}]\!]$-basis. 

For an arbitrary subset $J \subset I$, let $\pi_{J} : G/B \to G/P_{J}$ be the natural projection, and let $(\pi_{J})_{*} : K_{T}(G/B) \to K_{T}(G/P_{J})$ denote the induced push-forward, which is $\bZ[\Lambda]$-linear. Also, it is well-known that $\pi_{J}([\mathcal{O}^{w}]) = [\mathcal{O}_{J}^{\lfloor w \rfloor}]$ for each $w \in W$, where $\lfloor w \rfloor$ denotes the minimal-length coset representative for the coset $w W_{J}$ in $W/W_{J}$, 
and that $\pi_{J}([\mathcal{O}_{G/B}(- \varpi_{k})]) = [\mathcal{O}_{G/P_{J}}(- \varpi_{k})]$ for $k \in K = I \setminus J$ (see, for example, \cite[Section~9.2]{mnsdem}). 
Now, we set 
$QK_{T}^{\mathrm{poly}}(G/B) := K_T(G/B) \otimes_{{\mathbb Z}[\Lambda]} \bZ[\Lambda][Q] \subset QK_{T}(G/B)$, and 
$QK_{T}^{\mathrm{poly}}(G/P_{J}) := K_T(G/P_{J}) \otimes_{{\mathbb Z}[\Lambda]} \bZ[\Lambda][Q_{K}]$, 
where $\bZ[\Lambda][Q_{K}]$ is the ring of polynomials with coefficients in $\bZ[\Lambda]$ in the (Novikov) variables $Q_k = Q^{\alpha_k^{\vee}}$, $k \in K := I \setminus J$. 
Based on the finiteness result on the quantum multiplication in $QK_{T}(G/P_{J})$ with the line bundle classes $[\mathcal{O}_{G/P_{J}}(-\varpi_k)]$ for $k \in K = I \setminus J$ (see also \cite{ACT}), 
Kato proved (see \cite{katqkg}) that the ($\bZ[\Lambda]$-linear) push-forward $(\pi_{J})_{*} : K_{T}(G/B) \to K_{T}(G/P_{J})$ induces a surjective $\bZ[\Lambda]$-module homomorphism $\Phi_{J} : QK_{T}^{\mathrm{poly}}(G/B) \to QK_{T}^{\mathrm{poly}}(G/P_{J})$ 
such that for $w \in W$ and $k \in K = I \setminus J$, the following equality holds: 
\begin{equation}
\Phi_{J}([\mathcal{O}^{w}] \cdot [\mathcal{O}_{G/B}(-\varpi_k)]) = [\mathcal{O}_{J}^{\lfloor w \rfloor}] \cdot [\mathcal{O}_{G/P_{J}}(-\varpi_k)], 
\end{equation}
by defining 
$\Phi_{J}(Q^{\xi}) := Q^{[\xi]^{J}}$ for each $\xi \in Q^{\vee,+}$, where $[\xi]^{J} := \sum_{k \in I \setminus J} c_{k} \alpha_{k}^{\vee}$ for $\xi = \sum_{i \in I} c_{i} \alpha_{i}^{\vee} \in Q^{\vee,+}$. 
Namely, Kato proved the following. 

\begin{thm}[\cite{katqkg}]\label{thm:qksurj}
Let $J$ be an arbitrary subset of $I$. Then, 
the surjective $\mathbb{Z}[\Lambda]$-module homomorphism 
\begin{equation}
\Phi_{J} : QK_{T}^{\mathrm{poly}}(G/B) \to QK_{T}^{\mathrm{poly}}(G/P_{J})
\end{equation}
defined by 
$\Phi_{J}(Q^{\xi}[\mathcal{O}^{w}]) = Q^{[\xi]^J}[\mathcal{O}_{J}^{\lfloor w \rfloor}]$ for $w \in W$ and $\xi \in Q^{\vee,+}$, where $[\xi]^{J} = \sum_{k \in I \setminus J} c_{k} \alpha_{k}^{\vee}$ for $\xi = \sum_{i \in I} c_{i} \alpha_{i}^{\vee} \in Q^{\vee,+}$, 
has the following multiplicativity:
\begin{equation}
\Phi_{J}([\mathcal{O}^{w}] \cdot [\mathcal{O}_{G/B}(-\varpi_k)]) = [\mathcal{O}_{J}^{\lfloor w \rfloor}] \cdot [\mathcal{O}_{G/P_{J}}(-\varpi_k)] 
\end{equation}
for $w \in W$ and $k \in K = I \setminus J$. 
\end{thm}

In Appendix~\ref{sec:A}, we give another proof of the existence of the multiplicative $\bZ[\Lambda]$-module surjection $\Phi_{J}$ above by using the $K$-Peterson homomorphism, which is a homomorphism of $\bZ[\Lambda]$-algebras from the $K$-homology of the affine Grassmannian associated to $G$ to (the localization, with respect to $Q^{\vee,+}$, of) the quantum $K$-ring $QK_{T}(G/P_{J})$; a (new) proof of the existence of the $K$-Peterson homomorphism has been given by \cite{newprf}. 

We now recall the (cancellation-free) quantum $K$-theory Chevalley formula in \cite[Theorem~47]{lnscfs} (see also \cite[Theorem~12]{lnsccf}) for $G/B$, which is based on the quantum alcove model; in fact, we use the slight modification corresponding to the multiplication by the class $[\mathcal{O}(-\varpi_{k})]:=[\mathcal{O}_{G/B}(- \varpi_{k})]$ of the line bundle associated to $-\varpi_k$. Throughout this paper, we denote by $|S|$ for a set $S$ the cardinality of $S$. This formula is expressed in terms of a $(-\varpi_k)$-chain of roots, cf. Section~\ref{sec:lamch}.

\begin{thm}\label{qkchev} Let $k \in I$, and fix a reduced $(-\varpi_k)$-chain $\Gamma(k)$. Then, in $QK_{T}^{\mathrm{poly}}(G/B) \subset QK_{T}(G/B)$, we have {for $w \in W$},
\begin{equation}\label{qkchev-f}[\cO(-\varpi_k)]\cdot [{\mathcal O}^w] = \sum_{A\in{\mathcal A}(w,\Gamma(k))}
(-1)^{|A|} \,Q^{\dn(w,A)}\be^{-\wt(w,A)} [{\mathcal O}^{\wend(w,A)}]\,.\end{equation}
\end{thm}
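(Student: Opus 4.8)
The statement labeled \texttt{qkchev} (Theorem~\ref{qkchev}) is actually being \emph{recalled} from~\cite[Theorem~49]{lnscfs} rather than proved afresh, so the natural ``proof'' here is a derivation that reduces it to the cited result together with the reformulation in terms of line-bundle classes. The plan is to start from the Chevalley formula in~\cite{lnscfs} for multiplication by a divisor class $[\mathcal{O}^{s_k}]$ (or, in the form given there, by the class associated to the fundamental weight), and then pass to the line-bundle class $[\mathcal{O}(-\varpi_k)]$ via the standard identity relating the two in $K_T(G/B)$, namely $[\mathcal{O}(-\varpi_k)] = 1 - \mathbf{e}^{-\varpi_k}[\mathcal{O}^{s_k}]$ (up to conventions), which lifts to $QK_T(G/B)$. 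Substituting this relation and using that the empty admissible subset $A=\emptyset$ contributes the term $[\mathcal{O}^w]$ with sign $(-1)^0=1$ produces exactly the right-hand side of~\eqref{qkchev-f} after matching the weight and degree statistics $\mathrm{wt}(w,A)$ and $\mathrm{down}(w,A)$ with those of~\cite[Equations~(11),~(12),~(13)]{lnscfs}.

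Concretely, the steps I would carry out are: (1) fix the reduced $(-\varpi_k)$-chain $\Gamma(k)$ and recall, verbatim from~\cite[Section~3.2]{lnscfs}, the definition of $\mathcal{A}(w,\Gamma(k))$, of $\mathrm{end}(w,A)$, and of the statistics $\mathrm{wt}(w,A)$, $\mathrm{down}(w,A)$, and $|A|$ (here $|A|$ is the cardinality); (2) quote~\cite[Theorem~49]{lnscfs} in its original normalization; (3) perform the change of normalization from the divisor class to the line-bundle class $[\mathcal{O}(-\varpi_k)]$, checking that the sign $(-1)^{|A|}$ and the exponential prefactor $\mathbf{e}^{-\mathrm{wt}(w,A)}$ emerge correctly; (4) verify that the quantum-degree bookkeeping is unchanged, i.e.\ that $Q^{\mathrm{down}(w,A)}$ is the same in both formulations. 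Since~\cite{lnscfs} already presents the formula in cancellation-free form, no sign-reversing involution is needed at this stage; that machinery is reserved for the later passage to $G/P_J$.

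The main obstacle is purely one of normalization and conventions: one must be careful about whether $\Gamma(k)$ is the $(-\varpi_k)$-chain or the $\varpi_k$-chain (the excerpt negates all roots, so the chain of \emph{negated} roots consists of positive roots), and about the precise form in which~\cite{lnscfs} states its Chevalley formula — whether the multiplier there is a Schubert divisor class or a line-bundle class, and with which sign. Matching the weight statistic $\mathrm{wt}(w,A)$ (an element of $\Lambda$ built from the levels and the end-of-chain data) to the exponent $-\mathrm{wt}(w,A)$ appearing in~\eqref{qkchev-f}, and likewise for $\mathrm{down}(w,A) \in Q^{\vee,+}$, is the step where an error is most likely to creep in; I would do this by directly comparing with the $T$-equivariant (non-quantum) $K$-theory Chevalley formula of~\cite{lapawg}, whose specialization $Q_i \mapsto 0$ must be recovered. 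Once these identifications are pinned down, the theorem follows immediately from~\cite[Theorem~49]{lnscfs}.
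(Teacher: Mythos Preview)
You are right that the paper does not prove Theorem~\ref{qkchev}: it is stated without proof and attributed directly to \cite[Theorem~49]{lnscfs}. So there is essentially nothing to compare against; your first sentence already captures the paper's entire ``proof.''

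Where your proposal drifts is in the speculative reconstruction. The ``slight modification'' the paper refers to is simply the specialization of the general Chevalley formula in \cite{lnscfs} (which is stated for an arbitrary weight $\lambda$ and an arbitrary $\lambda$-chain) to the particular anti-dominant weight $\lambda=-\varpi_k$ and the specific chain $\Gamma(k)$. The result in \cite{lnscfs} is already formulated for multiplication by the line-bundle class $[\mathcal{O}(\lambda)]$, so no passage from a divisor Schubert class $[\mathcal{O}^{s_k}]$ to $[\mathcal{O}(-\varpi_k)]$ is needed. Your proposed identity $[\mathcal{O}(-\varpi_k)] = 1 - \mathbf{e}^{-\varpi_k}[\mathcal{O}^{s_k}]$ is not the correct relation in $K_T(G/B)$ in any standard convention, and building the argument on it would introduce an error rather than a mere normalization issue. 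Steps~(1), (2), and the degree check in~(4) of your plan are fine; step~(3) should be replaced by the one-line observation that \eqref{qkchev-f} is literally the $\lambda=-\varpi_k$ case of \cite[Theorem~49]{lnscfs}.
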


\begin{rema} \label{rem:Chevalley_cancellation-free}
The right-hand side of equation~\eqref{qkchev-f} is cancellation-free. 
Indeed, suppose, for a contradiction, that there exist two admissible subsets $A, A' \in \A(w, \Gamma(k))$ satisfying $\wend(w, A) = \wend(w, A')$ and $(-1)^{|A|} = -(-1)^{|A'|}$ (together with $\dn(w, A) = \dn(w, A')$ and $\wt(w, A) = \wt(w, A')$). Here we know (see \cite{bfpmbo} and also \cite{posqbg}) that for directed paths $\bp_{1}$, $\bp_{2}$ in $\QB(W)$ starting from the same element $v \in W$ and ending at the same element $u \in W$, the equality $(-1)^{\ell(\bp_{1})} = (-1)^{\ell(\bp_{2})}$ holds, where $\ell(\cdot)$ denotes the length of a directed path. This contradicts the equality $(-1)^{|A|} = -(-1)^{|A'|}$, as desired. 
\end{rema}

Let $N_{u, v}^{w, \xi} \in \bZ[P]$, with $v, w, u \in W^{J}$, $\xi \in Q_{I \setminus J}^{\vee, +} := \sum_{i \in I \setminus J} \bZ_{\ge 0} \alpha_{i}^{\vee}$, denote the \emph{structure constants} of $QK_{T}(G/P_{J})$ defined by: 
\begin{equation}
[\cO^{v}] \cdot [\cO^{w}] = \sum_{u \in W^{J}, \ \xi \in Q_{I \setminus J}^{\vee, +}} N_{v, w}^{u, \xi} Q^{\xi} [\cO^{u}]. 
\end{equation}
Let $\rho_{J}$ be a half of the sum of all positive roots of $P_{J}$, 
and set $\deg(Q^{\xi}) := 2\pair{\rho - \rho_{J}}{\xi}$ for $\xi \in Q_{I \setminus J}^{\vee, +}$. 
It is expected that the structure constants of $QK_{T}(G/P_{J})$ have the following \emph{positivity property}. 

\begin{conj}[{\cite[Conjecture~2.2]{bcmcfe}}]
For $v, w, u \in W^{J}$ and $\xi \in Q_{I \setminus J}^{\vee, +}$, we have 
\begin{equation}
(-1)^{\ell(v) + \ell(w) + \ell(u) + \deg(Q^{\xi})}N_{v, w}^{u, \xi} \in \bZ_{\ge 0}[\be^{\gamma} - 1 \mid \gamma \in -\Delta]. 
\end{equation}
\end{conj}

The positivity property of the structure constants $N_{s_k, w}^{u, \xi}$, with $k \in K = I \setminus J$, is proved for cominuscule varieties $G/P_{J}$, which include Grassmannians of type $A$, by Buch-Chaput-Mihalcea-Perrin in \cite{bcmcfe} by writing explicitly the structure constants. 
Also, the positivity property of the structure constants $N_{v, w}^{u, \xi}$, with $\xi = 0$, is proved by Anderson-Griffeth-Miller \cite{agmpkt} since these are the structure constants of the ordinary $T$-equivariant $K$-theory $K_{T}(G/P_{J})$. 
In this paper, we prove the positivity property of the structure constants $N_{s_k, w}^{u, \xi}$, with $k \in K = I \setminus J$, for full flag manifolds of arbitrary types, two-step flag manifolds of type $A$, and Grassmannians of type $C$. 

Let us define $C_{w}^{u, \xi} \in \bZ[P]$, with $w, u \in W$, $\xi \in Q_{I \setminus J}^{\vee, +}$, by: 
\begin{equation}
[\cO(-\vpi_{k})] \cdot [\cO^{w}] = \sum_{u \in W^{J}, \ \xi \in Q_{I \setminus J}^{\vee, +}} C_{w}^{u, \xi} Q^{\xi} [\cO^{u}]. 
\end{equation}
Since it is well-known that $[\cO^{s_{k}}] = 1 - \be^{-\vpi_{k}} [\cO(-\vpi_{k})]$ for $k \in I \setminus J$, we see that 
\begin{align}
[\cO^{s_{k}}] \cdot [\cO^{w}] &= (1 - \be^{-\vpi_{k}}[\cO(-\vpi_{k})]) \cdot [\cO^{w}] \\ 
&= [\cO^{w}] - \be^{-\vpi_{k}} [\cO(-\vpi_{k})] \cdot [\cO^{w}] \\ 
\begin{split}
&= (1 - \be^{-\vpi_{k}} C_{w}^{w, 0}) [\cO^{w}] + \sum_{\xi \in Q_{I \setminus J}^{\vee, +} \setminus \{0\}} (-\be^{-\vpi_{k}}C_{w}^{w, \xi}) Q^{\xi} [\cO^{w}] \\ 
& \quad + \sum_{u \in W \setminus \{w\}, \ \xi \in Q_{I \setminus J}^{\vee, +}} (-\be^{-\vpi_{k}}C_{w}^{u, \xi}) Q^{\xi} [\cO^{u}]. 
\end{split}
\end{align}
Hence it follows that for $w, u \in W^{J}$ and $\xi \in Q_{I \setminus J}^{\vee, +}$, 
\begin{equation}
N_{s_{k}, w}^{u, \xi} = \begin{cases}
1-\be^{-\vpi_{k}}C_{w}^{w, 0} & \text{if $u = w$ and $\xi = 0$}, \\ 
-\be^{-\vpi_{k}}C_{w}^{u, \xi} & \text{otherwise}. 
\end{cases}
\end{equation}

For the proof of the positivity property, we need the following lemma. 

\begin{lem} \label{lem:wt_belonging}
Let $w \in W$. Let $\lambda \in \Lambda$ be a dominant weight, and take a reduced $(-\lambda)$-chain $\Gamma$. 
For $A \in \A(w, \Gamma)$, we have $\wt(w, A) \in -\lambda + Q^{+}$. 
\end{lem}
\begin{proof}
Let $A \in \A(w, \Gamma)$. We denote by $\Lambda_{\af}^{0}$ the set of all level-zero weights of the (untwisted) affine Lie algebra $\Fg_{\af} = (\Fg \otimes \bC[t, t^{-1}]) \oplus \bC c \oplus \bC d$ associated to $\Fg$; 
in the following, we regard $\lambda$ as an element of $\Lambda_{\af}^{0}$. 

We use \emph{quantum Lakshmibai-Seshadri} (QLS) \emph{paths} of shape $\lambda$, which are defined in \cite[Definition~3.1]{lnsumk}. 
We first assume that $\Gamma$ is the \emph{lex} $(-\lambda)$-chain, defined in \cite[Section~4.2]{lnsccf}. 
In this case, we know from \cite[Proposition~31]{lnscfs} that 
there exists a QLS path $\eta$ of shape $\lambda$ such that $\wt(w, A) = -\wt(\eta)$, where $\wt(\eta) := \eta(1)$. 
Let us write $\eta$ in the form $\eta = (\nu_{1}, \ldots, \nu_{s}; 0 = a_{0} < a_{1} < \cdots < a_{s} = 1)$, with 
$\nu_{1}, \ldots, \nu_{s} \in W\lambda$ and $a_{0}, \ldots, a_{s} \in \bQ$. 
Then we see that $\nu_{k} \in \lambda - Q^{+}$, $k = 1, \ldots, s$, since $\lambda \in \Lambda$ is dominant and  $W$ is the finite Weyl group. 
Hence we have 
\begin{equation}
\wt(\eta) = \eta(1) = \sum_{k = 1}^{s} (a_{k} - a_{k-1}) \nu_{k} \in \lambda - \sum_{j \in I} \bQ_{\ge 0} \alpha_{j}. 
\end{equation}

Also, we have
\begin{equation}
\wt(\eta) = \eta(1) = \nu_{s} + \sum_{k = 1}^{s-1} a_{k}(\nu_{k} - \nu_{k+1}). 
\end{equation}
Since $(\nu_{k}, \nu_{k+1})$ is an $a_{k}$-chain (see \cite[Section~4]{litpro}), it follows that $a_{k}(\nu_{k} - \nu_{k+1}) \in Q$ 
for $k = 1, \ldots, s-1$.
In addition, we have that $\nu_{s} \in \lambda - Q^{+}$. 
Hence we see that $\wt(\eta) \in \lambda + Q$. 
Therefore, we deduce that $\wt(\eta) \in \lambda - Q^{+}$, as desired. 

We next assume that $\Gamma$ is an arbitrary reduced $(-\lambda)$-chain. 
Then we know that $\Gamma$ can be deformed to the lex $(-\lambda)$-chain $\Gamma'$ by repeated application of \emph{Yang-Baxter transformations} in \cite[Section~3.1]{klnnsq} (see also \cite[Remark~40]{lnsccf}). 
In this situation, \cite[Theorems~3.2 and 3.4]{klnnsq} implies that there exists a bijection $Y: \A(w, \Gamma) \rightarrow \A(w, \Gamma')$, given by \emph{quantum Yang-Baxter moves}, such that $\wt(w, Y(A)) = \wt(w, A)$ for all $A \in \A(w, \Gamma)$. 
Here we note that \cite[Theorem~3.2]{klnnsq} states that $Y$ is a \emph{sijection} (\cite[Section~2]{fakbpa}), i.e., a ``signed bijection'', where $\A(w, \Gamma)$ and $\A(w, \Gamma')$ are regarded as signed sets equipped with sign functions. However, since $-\lambda$ is anti-dominant, we have no sign-reversing involution on any non-empty subset of $\A(w, \Gamma)$ or $\A(w, \Gamma')$. Therefore, $Y$ is, in fact, a bijection. 
Since $\Gamma'$ is the lex $(-\lambda)$-chain and $Y(A) \in \A(w, \Gamma')$, we deduce that $\wt(w, A) = \wt(w, Y(A)) \in -\lambda + Q^{+}$. 
This proves the lemma. 
\end{proof}

Note that if there exists an edge $x \rightarrow y$ in $\QB(W)$ for $x, y \in W$, then we have $\ell(y) \equiv \ell(x) + 1 \mod 2$ by the definition of $\QB(W)$. 
This implies that for $w \in w$ and $A \in \A(w, \Gamma(k))$, we have $(-1)^{|A|} = (-1)^{\ell(\wend(w, A)) - \ell(w)}$. 

In this section, we prove the positivity property of structure constants for full flag manifolds as a corollary of the Chevalley formula (Theorem~\ref{qkchev}). 
We will give a proof of the positivity property for Grassmannians of type $C$ (resp., two-step flag manifolds of type $A$) in Section~\ref{sec:type_C} (resp., Section~\ref{sec:two-step_positivity}). 
\begin{cor} \label{cor:positivity_full}
Let $G$ be of an arbitrary type, $J = \emptyset$ (hence $P_{J} = B$), and $k \in I$. 
Then, for $w, u \in W^{J}$ and $\xi \in Q^{\vee, +}$, we have 
\begin{equation}
(-1)^{1 + \ell(w) + \ell(u) + \deg(Q^{\xi})}N_{s_{k}, w}^{u, \xi} \in \bZ_{\ge 0}[\be^{\gamma} - 1 \mid \gamma \in -\Delta].  
\end{equation}
\end{cor}

\begin{proof}
Let $w \in W$. Take $A \in \A(w, \Gamma(k))$ such that $A^{-} = \emptyset$. 
If $A = \emptyset$, then we have 
\begin{equation}
(-1)^{|A|}Q^{\dn(w, A)}\be^{-\wt(w, A)}[\cO^{\wend(w, A)}] = \be^{w\vpi_{k}} [\cO^{w}]. 
\end{equation} 
Since there exists no $A \in \A(w, \Gamma(k))$ such that $\wend(w, A) = w$ and $\dn(w, A) = 0$ except for $A = \emptyset$, we have $C_{w}^{w, 0} = \be^{w\vpi_{k}}$. In addition, we have $\deg(Q^{0}) = 0$. Hence it follows that 
\begin{equation}
N_{s_{k}, w}^{w, 0} = 1 - \be^{w\vpi_{k} - \vpi_{k}} = (-1)^{1 + \ell(w) + \ell(w) + \deg(Q^{0})} (\be^{w\vpi_{k} - \vpi_{k}} - 1); 
\end{equation}
note that $w\vpi_{k} - \vpi_{k} \in -Q^{+}$. 
Since 
\begin{align}
\be^{-\mu} &= \prod_{i \in I} (\be^{-\alpha_{i}})^{c_{i}} \\ 
&= \prod_{i \in I} ((\be^{-\alpha_{i}} - 1) + 1)^{c_{i}} \\ 
&= \prod_{i \in I} \left( \sum_{k = 0}^{c_{i}} \binom{c_{i}}{k} (\be^{-\alpha_{i}} - 1)^{k} \right) \in \bZ_{\ge 0}[\be^{\gamma} - 1 \mid \gamma \in -\Delta]
\end{align}
for $\mu = \sum_{i \in I} c_{i} \alpha_{i} \in Q^{+}$, we deduce that 
\begin{equation}
(-1)^{1 + \ell(w) + \ell(w) + \deg(Q^{0})} N_{s_{k}, w}^{w, 0} \in \bZ_{\ge 0}[\be^{\gamma}-1 \mid \gamma \in -\Delta], 
\end{equation}
as desired. 

Next, take $A \in \A(w, \Gamma(k)) \setminus \{\emptyset\}$. Then we have 
\begin{equation}
(-1)^{|A|}Q^{\dn(w, A)}\be^{-\wt(w, A)}[\cO^{\wend(w, A)}] = (-1)^{\ell(\wend(w, A)) - \ell(w)}Q^{\dn(w, A)}\be^{-\wt(w, A)}[\cO^{\wend(w, A)}]. 
\end{equation}
Also, by Lemma~\ref{lem:wt_belonging}, we have $\wt(w, A) \in -\vpi_{k} + Q^{+}$ for $A \in \A(w, \Gamma(k))$. 
Here we set 
\begin{equation}
\A(w, \Gamma(k))_{u, \xi, \lambda} := \{A \in \A(w, \Gamma(k)) \mid \wend(w, A) = u, \ \dn(w, A) = \xi, \ \wt(w, A) = \lambda\}, 
\end{equation}
for $u \in W$, $\xi \in Q^{\vee, +}$, and $\lambda \in -\vpi_{k}+Q^{+}$.
Then by Theorem~\ref{qkchev}, we have 
\begin{align}
C_{w}^{u, \xi} &= \sum_{\lambda \in -\vpi_{k}+Q^{+}} \sum_{A \in \A(w, \Gamma(k))_{u, \xi, \lambda}} (-1)^{|A|} \be^{-\wt(w, A)} \\ 
&= (-1)^{\ell(u) - \ell(w)} \sum_{\lambda \in -\vpi_{k}+Q^{+}} |\A(w, \Gamma(k))_{u, \xi, \lambda}| \be^{-\lambda}. 
\end{align}
Since $\deg(Q_{j}) = 2\pair{\rho}{\alpha_{j}^{\vee}} = 2$ for all $j \in I$, we have $\deg(Q^{\xi}) \in 2\bZ$. 
Therefore, we see that 
\begin{align}
N_{s_{k}, w}^{u, \xi} &= -\be^{-\vpi_{k}} \cdot (-1)^{\ell(u) - \ell(w)} \sum_{\lambda \in -\vpi_{k}+Q^{+}} |\A(w, \Gamma(k))_{u, \xi, \lambda}| \be^{-\lambda}. \\ 
&= (-1)^{1 + \ell(w) + \ell(u) + \deg(Q^{\xi})} \sum_{\lambda \in -\vpi_{k}+Q^{+}} |\A(w, \Gamma(k))_{u, \xi, \lambda}| \be^{-\vpi_{k} - \lambda}. 
\end{align}
This implies that 
\begin{equation}
(-1)^{1 + \ell(w) + \ell(u) + \deg(Q^{\xi})} N_{s_{k}, w}^{u, \xi} \in \bZ_{\ge 0}[\be^{\gamma}-1 \mid \gamma \in -\Delta], 
\end{equation}
as desired. This proves the corollary. 
\end{proof}

\section{Quantum $K$-theory Chevalley formulas in the maximal parabolic case}\label{sec:max_parabolic}

Given a maximal parabolic subgroup $P_J$ for $J=I\setminus\{k\}$, we will derive cancellation-free parabolic Chevalley formulas for the quantum multiplication in $QK_T(G/P_J)$ with $[\mathcal{O}(-\varpi_{k})]:=[\mathcal{O}_{G/P_{J}}(- \varpi_{k})]$. Based on Theorem~\ref{thm:qksurj} explained in Section~\ref{sec:qk}, we obtain certain formulas from equation~\eqref{qkchev-f} in Theorem~\ref{qkchev} for $QK_{T}^{\mathrm{poly}}(G/B) \subset QK_{T}(G/B)$ by applying $\Phi_{J}$;
this argument works for an arbitrary fundamental weight $\varpi_{k}$ of $G$ of any type. 
However, upon applying $\Phi_{J}$, there are many terms to be canceled in the corresponding formula in $QK_{T}^{\mathrm{poly}}(G/P_{J}) \subset QK_{T}(G/P_{J})$.
For any fundamental weight $\varpi_{k}$ in types $A$ and $C$, we cancel out all these terms via a sign-reversing involution, and obtain a cancellation-free formula. We rely on the structure of the corresponding $(-\varpi_k)$-chain of roots $\Gamma(k)$ in Section~\ref{sec:lamch}, as well as the quantum Bruhat graph criteria in Section~\ref{sec:qbg}.

\begin{rema}\label{nocancel}
Upon applying the above procedure, there are no cancellations among the terms corresponding to $w$-admissible subsets $A$ with $A^-=\emptyset$, by Remark~\ref{rem:Chevalley_cancellation-free}.  
\end{rema}

\begin{rema}
If $G$ is of type $A_{n-1}$, then the partial flag manifold $G/P_{J}$ for $J = I \setminus \{k\}$ is isomorphic to the \emph{Grassmannian} $\Gr(k, n)$ defined as: 
\begin{equation}
\Gr(k, n) := \{ V \mid \text{$V$ is a subspace of $\bC^{n}$ such that $\dim V = k$} \}. 
\end{equation}

Also, if $G$ is of type $C_{n}$, then the partial flag manifold $G/P_{J}$ for $J = I \setminus \{k\}$ is isomorphic to the \emph{isotropic Grassmannian} $\IG(k, 2n)$ defined as: 
\begin{equation}
\IG(k, 2n) := \left\{ V \ \middle| \ \parbox{21em}{$V$ is a subspace of $\bC^{2n}$ such that $\dim V = k$, and $V$ is isotropic with respect to $(-,-)$} \right\}; 
\end{equation}
where $(-,-)$ denotes a non-degenerate skew symmetric bilinear form on $\bC^{2n}$. 
\end{rema}

\subsection{Type $A_{n-1}$}

We start with type $A_{n-1}$, and we fix the anti-dominant fundamental weight $-\varpi_k$. Note that $w\in W^J$ is equivalent to $w[1,k]$ and $w[k+1,n]$ being increasing sequences.

\begin{lem}\label{paredge} Consider $w\in W^J$. We have an edge $w \stackrel{(i,j)}{\longrightarrow} w(i,j)$ in the quantum Bruhat graph on $S_n$, with $i\le k<j$, if and only if one of the following two conditions holds:
\begin{enumerate}
\item the edge is a Bruhat cover, with $w(i)=a$, $w(j)=a+1$, and $w(i,j)\in W^J$;
\item the edge is a quantum one, and $(i,j)=\alpha_k$. 
\end{enumerate}
\end{lem}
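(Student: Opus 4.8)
The plan is to analyze directly the quantum Bruhat graph criterion of Proposition~\ref{prop:quantum_bruhat_order_type_A} under the assumption $i \le k < j$, using the fact that $w \in W^J$ means $w[1,k]$ and $w[k+1,n]$ are each increasing. First I would recall that an edge $w \stackrel{(i,j)}{\longrightarrow} w(i,j)$ exists iff the transposition straddles no $w(k')$ with $i < k' < j$, i.e.\ there is no $k'$ with $i < k' < j$ and $w(i) \prec_{w(i)} w(k') \prec_{w(i)} w(j)$, and that such an edge is a Bruhat cover precisely when additionally $w(i) < w(j)$ in the usual order (equivalently $\ell$ increases by $1$), and a quantum edge otherwise (when $w(i) > w(j)$).

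Next I would treat the Bruhat case. Suppose $w(i) < w(j)$. The positions strictly between $i$ and $j$ split into those in $[i+1,k]$ and those in $[k+1,j-1]$. Since $w \in W^J$, on $[i,k]$ the values increase, so for $i < k' \le k$ we have $w(i) < w(k')$; the non-straddling condition then forces $w(k') > w(j)$ or $w(k') < w(i)$, but $w(k') > w(i)$ already, so $w(k') > w(j)$ — impossible if any such $k'$ existed together with $w(i)<w(j)$ unless $k'$ doesn't exist, i.e.\ $i = k$. Similarly, on $[k+1,j]$ the values increase, so for $k+1 \le k' < j$ we get $w(k') < w(j)$, and non-straddling forces $w(k') < w(i)$; combined with $w(i) < w(j)$ this is again impossible unless no such $k'$ exists, i.e.\ $j = k+1$. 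So a Bruhat edge with $i \le k < j$ must have $i=k$, $j=k+1$... wait, that over-restricts; I should be more careful: the straddling obstruction is about the \emph{circular} order $\prec_{w(i)}$, so when $w(i) < w(j)$ the arc from $w(i)$ to $w(j)$ going clockwise is exactly the ordinary interval $(w(i), w(j))$, and the analysis above does apply. Re-examining: the correct conclusion is that the edge forces $w[i+1,k]$ to have all values outside $(w(i),w(j))$ and likewise $w[k+1,j-1]$; monotonicity on each block then pins down $w(i) = a$, $w(j) = a+1$ for some $a$ (no values strictly between), and one checks $w(i,j) \in W^J$ because swapping two adjacent values $a, a+1$ sitting in positions $\le k$ and $> k$ respectively keeps both blocks increasing. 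Conversely, if $w(i)=a$, $w(j)=a+1$ and $w(i,j)\in W^J$, the interval $(a,a+1)$ is empty, so no straddling is possible and we get a Bruhat edge. This establishes the equivalence with~(1).

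Then I would treat the quantum case $w(i) > w(j)$: here the clockwise arc from $w(i)$ to $w(j)$ in $\prec_{w(i)}$ is $\{w(i){+}1,\dots,n\}\cup\{1,\dots,w(j){-}1\}$. The non-straddling condition must hold for every $k'$ with $i<k'<j$. Using $w\in W^J$: on $[i,k]$ values increase so $w(k')>w(i)$ for $i<k'\le k$, forcing $w(k')$ to lie in $\{w(j),\dots\}$... here the key point is that for the quantum edge to survive, the increasing block $w[i,k]$ must actually terminate at $i$ (else $w(i{+}1)$ with $w(i)<w(i{+}1)$ and $w(i{+}1)$ too small would straddle) — i.e.\ $i = k$ — and symmetrically $w[k+1,j]$ increasing forces $j = k+1$. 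Hence $(i,j) = (k,k+1) = \alpha_k$, which is~(2). Conversely $(i,j)=\alpha_k$ has no positions strictly between, so the edge always exists, and it is quantum exactly when $w(k) > w(k+1)$, which is automatic when $w \in W^J$ and $i\le k<j$ means... actually one must note that if $w(k)<w(k+1)$ then $(k,k+1)$ gives a Bruhat edge, which is already covered by case~(1) with $a = w(k)$; so the dichotomy in the lemma is between these, not an "or" that could overlap. I expect the main obstacle to be handling the circular order cleanly in the quantum case and making the "the increasing block must terminate" argument airtight — in particular verifying that the presence of even one intermediate position in either block destroys the edge — so I would organize that step as two short symmetric sublemmas about $w[i,k]$ and $w[k+1,j]$.
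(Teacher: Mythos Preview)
Your approach is the same as the paper's: both use Proposition~\ref{prop:quantum_bruhat_order_type_A} together with the monotonicity of $w[1,k]$ and $w[k+1,n]$, and your quantum-edge analysis matches the paper's (if $i<k$ then $w(k)$ is straddled since $w(k)>w(i)$; if $j>k+1$ then $w(k+1)$ is straddled since $w(k+1)<w(j)<w(i)$).

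In the Bruhat case, however, there is a real gap even after your self-correction. From non-straddling plus monotonicity you correctly extract $w(k')>w(j)$ for $i<k'\le k$ and $w(k')<w(i)$ for $k<k'<j$; but the sentence ``monotonicity on each block then pins down $w(i)=a$, $w(j)=a+1$'' does not follow from that alone. Knowing that the \emph{intermediate positions} carry no value in $(w(i),w(j))$ does not force $(w(i),w(j))$ to be empty of integers. You must still ask where the value $a+1$ lives: by monotonicity of $w[1,k]$ it cannot sit at a position $<i$, and by monotonicity of $w[k+1,n]$ it cannot sit at a position $>j$; by what you proved it cannot sit in $(i,j)$ either, so it must be $w(j)$. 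This is exactly the paper's one-line argument (``the value $a+1$ would be straddled, irrespective of $a+1$ being in $w[1,k]$ or $w[k+1,n]$''), and once you insert it your plan goes through. Your verification that $w(i,j)\in W^J$ (which the paper leaves implicit) is then immediate from the two inequalities you derived.
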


\begin{proof} We implicitly use several times the quantum Bruhat graph criterion in Proposition~\ref{prop:quantum_bruhat_order_type_A}, as well as the fact that $w[1,k]$ and $w[k+1,n]$ are increasing sequences. Letting $a:=w(i)$, and assuming that the edge is a Bruhat cover, we cannot have $w(j)>a+1$ because the value $a+1$ would be straddled by the transposition $(i,j)$. Indeed, this would happen irrespective of $a+1$ being in $w[1,k]$ or $w[k+1,n]$. So we must have $w(j)=a+1$. Now assume that $w(i)>w(j)$. If $i<k$, then the value $w(k)$ would be straddled, while if $j>k+1$, then the value $w(k+1)$ would be straddled. So we must have $i=k$ and $j=k+1$.
\end{proof}

We can now give a short proof of \cite[Theorem~I]{knscfa} in type $A_{n-1}$, which is restated below in terms of the quantum alcove model. 

\begin{thm}\label{qkchev-a} In type $A_{n-1}$, consider $1\le k\le n-1$ and $w\in W^J$.

{\rm (1)} If $w\ge\lfloor s_\theta\rfloor$, then we have the following cancellation-free formula:
\begin{equation}\label{typea1-chev}[\cO(-\varpi_k)]\cdot[\cO^w]=
\mathbf{e}^{w\varpi_k}\sum_{A\in\A_{\lessdot}(w,\Gamma(k))}(-1)^{|A|}\left([\cO^{\wend(w,A)}]-Q_k[\cO^{\lfloor\wend(w,A)s_{k}\rfloor}]\right)\,.\end{equation}

{\rm (2)} If $w\not\ge\lfloor s_\theta\rfloor$, then we have the following cancellation-free formula:
\begin{equation}\label{typea2-chev}[\cO(-\varpi_k)]\cdot[\cO^w]=
\mathbf{e}^{w\varpi_k}\sum_{A\in\A_{\lessdot}(w,\Gamma(k))}(-1)^{|A|}[\cO^{\wend(w,A)}]\,.\end{equation}
\end{thm}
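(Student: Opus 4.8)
The plan is to start from the $G/B$ Chevalley formula in Theorem~\ref{qkchev} applied to a minimal coset representative $w \in W^J$, with the specific $(-\varpi_k)$-chain $\Gamma(k)$ of~\eqref{omegakchain}, and then push it through the surjection $\Phi_J$. Applying $\Phi_J$ to~\eqref{qkchev-f} replaces each $[\cO^{\wend(w,A)}]$ by $[\cO_J^{\lfloor \wend(w,A) \rfloor}]$ and each Novikov monomial $Q^{\dn(w,A)}$ by $Q_k^{d_k}$, where $d_k$ is the coefficient of $\alpha_k^\vee$ in $\dn(w,A)$; since the chain $\Gamma(k)$ only involves roots $(i,j)$ with $i \le k < j$, every quantum step in $\Pi(w,A)$ contributes to the $\alpha_k^\vee$-component, so $d_k = |A^-|$ in fact (one should check this from the shape of the chain and the degree formula in~\cite{lnscfs}). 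The weight factor $\be^{-\wt(w,A)}$ must be rewritten; I expect that for $A \in \A_\lessdot(w,\Gamma(k))$ (no quantum steps) one has $\be^{-\wt(w,A)} = \be^{w\varpi_k}$, which is where the overall prefactor $\be^{w\varpi_k}$ in~\eqref{typea1-chev} and~\eqref{typea2-chev} comes from; this is a direct computation with the definition of $\wt$ in~\cite[(12)]{lnscfs} and the fact that $-\varpi_k$ is the weight of the chain.

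Next I would set up the sign-reversing involution. By Remark~\ref{nocancel}, no cancellation can occur among the Bruhat-only subsets $A \in \A_\lessdot(w,\Gamma(k))$, so these survive and give the terms written explicitly in~\eqref{typea1-chev}–\eqref{typea2-chev}; the issue is to cancel, or re-identify, all $A$ with $A^- \ne \emptyset$. Using Lemma~\ref{paredge}, a quantum step in $\Pi(w,A)$ must use the root $(k,k+1) = \alpha_k$, so $A^-$ is either empty or consists of a single occurrence of $\alpha_k$ in $\Gamma(k)$, contributing $Q_k^1$. The key structural claim is then: for each $A$ with $A^- = \{(k,k+1)\}$, the path $\Pi(w,A)$ factors as a Bruhat-ascending path from $w$ up to some $z$, the quantum drop $z \to z s_k$ at $\alpha_k$, and then a further Bruhat-ascending path; after applying $\lfloor \cdot \rfloor^J$, the initial and final Bruhat portions should match, up to sign $(-1)$, either a term $-Q_k[\cO_J^{\lfloor \wend(w,A')s_k\rfloor}]$ coming from a Bruhat-only $A'$ (when $w \ge \lfloor s_\theta \rfloor$, case (1)), or else they cancel in pairs against other quantum terms (giving~\eqref{typea2-chev}, case (2)). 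The dichotomy $w \ge \lfloor s_\theta\rfloor$ versus $w \not\ge \lfloor s_\theta\rfloor$ should be exactly the condition governing whether the unique maximal Bruhat path from $w$ in $\Gamma(k)$ can be extended by the quantum edge at $\alpha_k$, i.e.\ whether $\wend(w,A)s_k$ drops in length in the required way; I would prove this using the type $A$ edge criterion (Proposition~\ref{prop:quantum_bruhat_order_type_A}) and the shellability of $\QB(W)$.

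Concretely, the involution I would construct on the set of surviving admissible subsets after $\Phi_J$ is: given $A$ with $A^- = \{(k,k+1)\}$ occurring at position $p$ in $\Gamma(k)$, look at whether position $p$ can be "toggled" — removed to produce a $w$-admissible $A \setminus \{p\}$ with the same $\lfloor \wend \rfloor$, same $\wt$, and one fewer element — or, conversely, whether an $\alpha_k$ can be inserted; pairs $(A, A\setminus\{p\})$ have opposite signs $(-1)^{|A|}$ and identical everything else under $\Phi_J$, so they cancel. What is left uncancelled is precisely the $\A_\lessdot$ terms plus, in case (1), the "forced" quantum terms $-Q_k[\cO_J^{\lfloor \wend(w,A)s_k\rfloor}]$ that have no Bruhat partner to cancel against — these are the $A$ for which the quantum $\alpha_k$-step is genuinely unavoidable, which happens exactly when $w \ge \lfloor s_\theta\rfloor$. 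The main obstacle, I expect, is verifying that this toggling map is well-defined and involutive — i.e.\ that inserting/deleting the $\alpha_k$ step preserves $w$-admissibility (a path condition in $\QB(W)$, checked via the type $A$ criterion) and, crucially, preserves $\lfloor \wend \rfloor^J$ and the weight statistic; keeping careful track of how $\lfloor \cdot \rfloor^J$ interacts with right multiplication by $s_k$ and with the Bruhat portions of the path is the delicate bookkeeping at the heart of the argument.
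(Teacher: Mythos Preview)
Your overall framework is right --- push the $G/B$ formula through $\Phi_J$ and kill the unwanted terms by a sign-reversing involution --- but the specific involution you propose does not work, and this is a genuine gap rather than a detail to be filled in.

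You propose to toggle the position $p$ of $(k,k+1)=\alpha_k$ itself, pairing a quantum subset $A$ with $A\setminus\{p\}$. But $A\setminus\{p\}$ lies in $\A_\lessdot(w,\Gamma(k))$, so its Novikov weight is $1$ while that of $A$ is $Q_k$; these cannot cancel. Moreover $\wend(w,A\setminus\{p\})=\wend(w,A)\,s_k$, and since $s_k\notin W_J$ (recall $J=I\setminus\{k\}$) we have $\lfloor\wend(w,A)\rfloor\ne\lfloor\wend(w,A)s_k\rfloor$ in general. So neither the $Q$-degree nor the parabolic end is preserved by your toggle. (Also note that $(k,k+1)$ is the \emph{last} root in $\Gamma(k)$, so there is no ``further Bruhat-ascending path'' after the quantum drop; every $A$ with $A^-\ne\emptyset$ is exactly $A_0\sqcup\{(k,k+1)\}$ for some $A_0\in\A_\lessdot$.)

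The correct mechanism, which the paper uses, is to cancel quantum terms \emph{against other quantum terms} by toggling a neighbouring \emph{Bruhat} edge. Concretely, observe (via Deodhar's criterion) that $w\ge\lfloor s_\theta\rfloor$ is equivalent to $w(k)=n$ and $w(k+1)=1$. If $w(k)<n$, choose $q>k+1$ with $w(q)=w(k)+1$ and pair $A$ (containing $(k,k+1)$ but no other $(k,j)$) with $A':=A\cup\{(k,q)\}$; if $w(k)=n$ but $w(k+1)>1$, choose $p<k$ with $w(p)=w(k+1)-1$ and pair $A$ with $A':=A\cup\{(p,k+1)\}$. In either case both $A$ and $A'$ are quantum (same $Q_k$), and the key identity is, e.g.,
\[
\lfloor\wend(w,A')\rfloor=\lfloor\cdots(k,q)(k,k+1)\rfloor=\lfloor\cdots(k,k+1)(k+1,q)\rfloor=\lfloor\wend(w,A)\rfloor,
\]
since $(k+1,q)\in W_J$. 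This is where the parabolic floor is actually preserved. When $w(k)=n$ and $w(k+1)=1$ no such neighbouring Bruhat edge exists, the quantum terms survive uncancelled, and pairing each $A_0\in\A_\lessdot$ with $A_0\cup\{(k,k+1)\}$ gives exactly the two summands in~\eqref{typea1-chev}.
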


\begin{rema}
As will be seen in the proof below, for $w \in W^{J}$, the condition $w \geq \mcr{s_{\theta}}$ is equivalent to the condition $w(k) = n$ and $w(k+1) = 1$. 
\end{rema}

\begin{exa}
We give some examples of the Chevalley formula in the case that $n = 4$ and $k = 2$. Note that $\mcr{s_{\theta}} = s_{3}s_{1}s_{2}$. Also, we have $\Gamma(2) = ((1,4), (1,3), (2,4), (2,3))$ (with all roots negated). 
\begin{enumerate}

\item Let $w = s_{3}s_{1}s_{2} = \mcr{s_{\theta}}$. Table~\ref{tab:exm_typeA_Grassmannian} is the list of all admissible subsets $A \in \A(w, \Gamma(2))$ and their statistics $\wend(w, A)$, $\dn(w, A)$, together with $\mcr{\wend(w, A)}$; note that $\wt(w, A) = -s_{2}\vpi_{2}$ for all $A \in \A(w, \Gamma(2))$. 
\begin{table}[ht]
\caption{The list of all admissible subsets $A \in \A(s_{3}s_{1}s_{2}, \Gamma(2))$}
\label{tab:exm_typeA_Grassmannian}
\centering
\begin{tabular}{|c|ccc|} \hline
$A$ & $\wend(w, A)$ & $\mcr{\wend(w, A)}$ & $\dn(w, A)$ \\ \hline  
$\emptyset$ & $s_{3}s_{1}s_{2}$ & $s_{3}s_{1}s_{2}$ & $0$ \\ 
$\{1\}$ & $s_{2}s_{3}s_{1}s_{2}$ & $s_{2}s_{3}s_{1}s_{2}$ & $0$ \\ 
$\{4\}$ & $s_{3}s_{1}$ & $e$ & $\alpha_{2}^{\vee}$ \\ 
$\{1, 4\}$ & $s_{2}s_{3}s_{1}$ & $s_{2}$ & $\alpha_{2}^{\vee}$ \\ \hline
\end{tabular}
\end{table}

By Theorem~\ref{qkchev}, in $QK_{T}^{\mathrm{poly}}(G/B)$, we have: 
\begin{equation} \label{eq:Chevalley_full_example}
[\cO(-\vpi_{2})] \cdot [\cO^{s_{2}s_{3}s_{1}s_{2}}] = \be^{s_{2}s_{3}s_{1}s_{2}\vpi_{2}} \left( [\cO^{s_{3}s_{1}s_{2}}] - [\cO^{s_{2}s_{3}s_{1}s_{2}}] - Q_{2} [\cO^{s_{3}s_{1}}] + Q_{2} [\cO^{s_{2}s_{3}s_{1}}] \right)
\end{equation}
By applying the surjection $\Phi_{J}: QK_{T}^{\mathrm{poly}}(G/B) \rightarrow QK_{T}^{\mathrm{poly}}(G/P_{J})$, explained in Theorem~\ref{thm:qksurj}, to equation~\eqref{eq:Chevalley_full_example}, we obtain the following cancellation-free formula in $QK_{T}^{\mathrm{poly}}(G/P_{J}) \subset QK_{T}(G/P_{J})$: 
\begin{equation}
[\cO(-\vpi_{2})] \cdot [\cO^{s_{2}s_{3}s_{1}s_{2}}] = \be^{s_{2}s_{3}s_{1}s_{2}\vpi_{2}} \left( [\cO^{s_{3}s_{1}s_{2}}] - [\cO^{s_{2}s_{3}s_{1}s_{2}}] - Q_{2} [\cO^{e}] + Q_{2} [\cO^{s_{2}}] \right). 
\end{equation}

Also, we deduce that $\A_{\lessdot}(w, \Gamma(2)) = \{\emptyset, \{1\}\}$. Therefore, we see that 
\begin{align}
& \text{(RHS of equation~\eqref{typea1-chev})} \\ 
&= \be^{s_{2}s_{3}s_{1}s_{2}\vpi_{2}} \left( \left( [\cO^{s_{3}s_{1}s_{2}}] - Q_{2} [\cO^{\mcr{s_{3}s_{1}}}] \right) - \left( [\cO^{s_{2}s_{3}s_{1}s_{2}}] - Q_{2} [\cO^{\mcr{s_{2}s_{3}s_{1}}}] \right) \right) \\ 
&= \be^{s_{2}s_{3}s_{1}s_{2}\vpi_{2}} \left( [\cO^{s_{3}s_{1}s_{2}}] - Q_{2} [\cO^{e}] - [\cO^{s_{2}s_{3}s_{1}s_{2}}] + Q_{2} [\cO^{s_{2}}] \right) \\ 
&= [\cO(-\vpi_{2})] \cdot [\cO^{s_{2}s_{3}s_{1}s_{2}}]. 
\end{align}
Thus Theorem~\ref{qkchev-a}\,(1) holds in this case. 

\item Let $w = s_{2}$; note that $w \not\ge \mcr{s_{\theta}}$. Then we can give the list of all admissible subsets $A \in \A(w, \Gamma(2))$ and their statistics $\wend(w, A)$, $\dn(w, A)$, together with $\mcr{\wend(w, A)}$, as in Table~\ref{tab:exm_typeA_Grassmannian2}. 
Note that $\wt(w, A) = -s_{2}\vpi_{2}$ for all $A \in \A(w, \Gamma(2))$. 

\begin{table}[ht]
\caption{The list of all admissible subsets $A \in \A(s_{2}, \Gamma(2))$}
\label{tab:exm_typeA_Grassmannian2}
\centering
\begin{tabular}{|c|ccc|} \hline
$A$ & $\wend(w, A)$ & $\mcr{\wend(w, A)}$ & $\dn(w, A)$ \\ \hline
$\emptyset$ & $s_{2}$ & $s_{2}$ & $0$ \\ 
$\{2\}$ & $s_{1}s_{2}$ & $s_{1}s_{2}$ & $0$ \\ 
$\{3\}$ & $s_{3}s_{2}$ & $s_{3}s_{2}$ & $0$ \\ 
$\{4\}$ & $e$ & $e$ & $\alpha_{2}^{\vee}$ \\ 
$\{2, 3\}$ & $s_{3}s_{1}s_{2}$ & $s_{3}s_{1}s_{2}$ & $0$ \\ 
$\{2, 4\}$ & $s_{1}$ & $e$ & $\alpha_{2}^{\vee}$ \\ 
$\{3, 4\}$ & $s_{3}$ & $e$ & $\alpha_{2}^{\vee}$ \\ 
$\{2, 3, 4\}$ & $s_{3}s_{1}$ & $e$ & $\alpha_{2}^{\vee}$ \\ \hline
\end{tabular}
\end{table}

By Theorem~\ref{qkchev}, in $QK_{T}^{\mathrm{poly}}(G/B)$, we have: 
\begin{equation} \label{eq:Chevalley_full_example2}
\begin{split}
[\cO(-\vpi_{2})] \cdot [\cO^{s_{2}}] &= \be^{s_{2}\vpi_{2}} \left( [\cO^{s_{2}}] - [\cO^{s_{1}s_{2}}] - [\cO^{s_{3}s_{2}}] - Q_{2} [\cO^{e}] \right. \\ 
& \left. \quad {}+ [\cO^{s_{3}s_{1}s_{2}}] + Q_{2} [\cO^{s_{1}}] + Q_{2} [\cO^{s_{3}}] - Q_{2} [\cO^{s_{3}s_{1}}] \right). 
\end{split}
\end{equation}
By applying the surjection $\Phi_{J}: QK_{T}^{\mathrm{poly}}(G/B) \rightarrow QK_{T}^{\mathrm{poly}}(G/P_{J})$ to equation~\eqref{eq:Chevalley_full_example2}, we obtain the following cancellation-free formula in $QK_{T}^{\mathrm{poly}}(G/P_{J}) \subset QK_{T}(G/P_{J})$; here, the underlined terms in the first equality are canceled out: 
\begin{align}
\begin{split}
[\cO(-\vpi_{2})] \cdot [\cO^{s_{2}}] &= \be^{s_{2}\vpi_{2}} ([\cO^{s_{2}}] - [\cO^{s_{1}s_{2}}] - [\cO^{s_{3}s_{2}}] \underline{{}- Q_{2} [\cO^{e}]} \\ 
& \quad + [\cO^{s_{3}s_{1}s_{2}}] \underline{{}+ Q_{2} [\cO^{e}]} \underline{{}+ Q_{2} [\cO^{e}]} \underline{{}- Q_{2} [\cO^{e}]}) 
\end{split} \\ 
\begin{split}
&= \be^{s_{2}\vpi_{2}} ([\cO^{s_{2}}] - [\cO^{s_{1}s_{2}}] - [\cO^{s_{3}s_{2}}] + [\cO^{s_{3}s_{1}s_{2}}]).  
\end{split}
\end{align}

Also, we deduce that $\A_{\lessdot}(w, \Gamma(2)) = \{ \emptyset, \{2\}, \{3\}, \{2, 3\}\}$. Therefore, we see that 
\begin{align}
\text{(RHS of equation~\eqref{typea2-chev})} &= \be^{s_{2}\vpi_{2}} \left( [\cO^{s_{2}}] - [\cO^{s_{1}s_{2}}] - [\cO^{s_{3}s_{2}}] + [\cO^{s_{3}s_{1}s_{2}}] \right). \\ 
&= [\cO(-\vpi_{2})] \cdot [\cO^{s_{2}}]. 
\end{align}
Thus Theorem~\ref{qkchev-a}\,(2) holds in this case. 
\end{enumerate}
\end{exa}

\begin{proof}[Proof of Theorem~\ref{qkchev-a}] The result is clear when $w$ is the identity (indeed, a $w$-admissible subset is either empty or consists only of the transposition $(k,k+1)$); so we can assume that $w(k)>w(k+1)$. 

Let $A$ be a generic $w$-admissible subset in $\A(w,\Gamma(k))$. Given the structure of the $(-\varpi_k)$-chain $\Gamma(k)$ in~\eqref{omegakchain} and Lemma~\ref{paredge}, we can see that a quantum step in a path $\Pi(w,A)$ must correspond to the transposition $\alpha_k=(k,k+1)$, which is the last one in $\Gamma(k)$. All other steps are Bruhat covers of the form specified in Lemma~\ref{paredge}~(1). Moreover, the structure of $\Gamma(k)$ combined with the fact that $w\in W^J$ imply that $A$ contains at most one root labeling a Bruhat cover in $\Pi(w,A)$ of the following forms: $(i,\,\cdot\,)$ for each $i\le k$, and $(\,\cdot\,,j)$ for each $j>k$. All these facts will be used implicitly.

By Deodhar's criterion for the Bruhat order on the symmetric group~\cite[Theorem~2.6.3]{babccg}, we can see that $w\ge\lfloor s_\theta\rfloor=[2, 3, \ldots, k, n, 1, k+1, \ldots, n-1]$ (in one-line notation) if and only if $w(k)=n$ and $w(k+1)=1$. Thus, we consider the following cases; whenever there are terms to be canceled, we describe the sign-reversing involution mentioned above.

\emph{Case} 1: $w(k)<n$. Let $q>k+1$ be such that $w(q)=w(k)+1\le n$. We pair every $A$ containing $(k,k+1)$, but not $(k,j)$ with $j>k+1$, with $A':=A\cup\{(k,q)\}$. It is clear that $A'$ is also $w$-admissible, and in fact the root $(k,q)$ is the predecessor of $(k,k+1)$ in $A'$. Moreover, we have
\[\lfloor{\rm end}(w,A')\rfloor=\lfloor\cdots(k,q)(k,k+1)\rfloor=\lfloor\cdots(k,k+1)(k+1,q)\rfloor=\lfloor{\rm end}(w,A)\rfloor\,,\]
as well as ${\rm down}(w,A)={\rm down}(w,A')$ and ${\rm wt}(w,A)={\rm wt}(w,A')$. 
The latter property is a consequence of the fact that all the affine reflections in the definition of ${\rm wt}(w,\,\cdot\,)$ in \cite[Equation~(12)]{lnscfs} fix $\varpi_k$; for more details, see~\cite[Corollary~8.2]{lapawg} and the discussion preceding it. Finally, as the cardinalities of $A$ and $A'$ differ by $1$, their contributions to the parabolic Chevalley formula for $G/P_{J}$ have opposite signs. We have thus proved that the involution $A\leftrightarrow A'$ is sign-reversing. 

\emph{Case} 2: $w(k)=n$ and $w(k+1)>1$. Let $p< k$ be such that $w(p)=w(k+1)-1\ge 1$. We pair every $A$ containing $(k,k+1)$, but not $(i,k+1)$ with $i< k$, with $A':=A\cup\{(p,k+1)\}$. 
We continue the reasoning like in Case~1.

\emph{Case} 3: $w(k)=n$ and $w(k+1)=1$. It is clear that no $w$-admissible subset $A$ can contain transpositions of the form $(i,k+1)$ with $i<k$, and $(k,j)$ with $j>k+1$. Furthermore, there is a 2-to-1 correspondence between $\A(w,\Gamma(k))$ and $\A_\lessdot(w,\Gamma(k))$: every $A\in\A_\lessdot(w,\Gamma(k))$ corresponds to itself and $A\cup\{(k,k+1)\}$. Like above, we can check that $\wt(w,A)=\wt(w,A\cup\{(k,k+1)\})$. Finally, based on the above facts and Remark~\ref{nocancel}, we can see that there are no cancellations of terms corresponding to the elements of either $A_\lessdot(w,\Gamma(k))$ or $\A(w,\Gamma(k))\setminus A_\lessdot(w,\Gamma(k))$.

It is now easy to see that the uncanceled terms in the resulting combinatorial formula are precisely those in \eqref{typea1-chev} in Case~3, and those in \eqref{typea2-chev} in Cases~1 and~2.
\end{proof}

\subsection{Type $C_n$} \label{sec:type_C}

As we move beyond type $A$, we note that the following analogue of Lemma~\ref{paredge} exists: \cite[Lemma~5.1]{knscfa} for any simply laced type and $\varpi_k$ minuscule. Below we present the corresponding result in type $C_n$, which works for any $\varpi_k$; this result is easily proved based on the quantum Bruhat graph criterion in Section~\ref{sec:qbg}.

\begin{lem}\label{paredgec} Consider $1\le k\le n$ and $w\in W^J$ in type $C_n$. We have a quantum edge $w \stackrel{\alpha}{\longrightarrow} ws_\alpha$ in $\QB(W)$, with $\alpha\in\Phi^+\setminus\Phi_J^+$, if and only if $w\ne e$ and one of the following two conditions holds:
\begin{enumerate}
\item $\alpha=\alpha_k$;
\item $\alpha=(k,\overline{k})$, $w(k)=\overline{a}$ for $1\le a\le n$, and $w[k+1,n]\subseteq\{a+1,\ldots,n\}$ if $k<n$. 
\end{enumerate}
\end{lem}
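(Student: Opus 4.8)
The plan is to analyze directly which positive roots $\alpha \in \Phi^+\setminus\Phi_J^+$ can label a quantum edge $w \xrightarrow{\alpha} ws_\alpha$ with $w \in W^J$, using the three-case criterion of Proposition~\ref{prop:quantum_bruhat_order_type_C}. First I would recall that $w \in W^J$ for $J = I\setminus\{k\}$ means precisely that $w(1) < w(2) < \cdots < w(k)$ and $w(k+1) < \cdots < w(n)$ (in the order on $[\overline n]$), and that a root $\alpha = (i,j)$, $(i,\overline\jmath)$, or $(i,\overline\imath)$ lies outside $\Phi_J^+$ exactly when the associated reflection moves some position $\le k$ past the wall, i.e. when the "small" index is $\le k$ and the "large" index is $> k$ (where for $(i,\overline\jmath)$ we compare positions in $[\overline n]$, so $\overline\jmath$ counts as a position $> n \ge k$). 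So the roots to consider are $(i,j)$ with $i \le k < j \le n$; $(i,\overline\jmath)$ with $i \le k$ (and $\overline\jmath$ always on the far side); and $(k,\overline k)$. The case $w = e$ must be excluded at the outset since the identity has no quantum edges (a quantum edge strictly decreases the sum $\ell + \pair{\rho}{\beta^\vee}$, impossible from length $0$... more precisely $\ell(ws_\beta) = \ell(w)+1-2\pair{\rho}{\beta^\vee} < 0$ is impossible), which also forces $w \ne e$ in the statement.

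Next I would treat the three families of roots in turn, each time asking when the edge is a \emph{quantum} (not Bruhat) edge, i.e. when the length drops. For $\alpha = (i,j)$ with $i \le k < j$: since $w[1,k]$ and $w[k+1,n]$ are increasing, $w(i)$ is the largest among $w(1),\dots,w(i)$ and, combined with the criterion of Proposition~\ref{prop:quantum_bruhat_order_type_C}(1), a short analysis (parallel to the proof of Lemma~\ref{paredge}) shows that an edge forces $i = k$, $j = k+1$, and that this edge is quantum iff $w(k) > w(k+1)$; but if $i=k,j=k+1$ and it is a quantum edge we precisely get $\alpha = \alpha_k$, which is case (1). (When $w(k) < w(k+1)$ there is no edge at all beyond the Bruhat one covered elsewhere.) For $\alpha = (i,\overline\jmath) = \varepsilon_i+\varepsilon_j$ with $i \le k$: Proposition~\ref{prop:quantum_bruhat_order_type_C}(2) requires $w(i) < w(\overline\jmath)$ with equal signs, and one checks this edge is always a Bruhat edge (the length goes \emph{up}), since $\pair{\rho}{\alpha^\vee}$ computations show no length drop is possible here — hence this family contributes no quantum edges. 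Finally, for $\alpha = (k,\overline k) = 2\varepsilon_k$: Proposition~\ref{prop:quantum_bruhat_order_type_C}(3) gives an edge iff there is no $k < l \le n$ with $w(k) \prec w(l) \prec w(\overline k) = \overline{w(k)}$ in the circular order $\prec_{w(k)}$; writing $w(k) = \overline a$ (it must be barred for the edge to be quantum rather than Bruhat — if $w(k) = a$ unbarred, $w(k) \prec w(k+1) \prec \cdots$ wraps through all of $\{a+1,\dots,n,\overline n,\dots\}$ and one shows the edge, if it exists, is Bruhat), the condition "no straddling $l$" translates, since $w[k+1,n]$ is increasing, exactly into $w(k+1),\dots,w(n) \in \{a+1,\dots,n\}$ when $k < n$, i.e. case (2); and one verifies that under $w(k) = \overline a$ the edge, when present, indeed has the length drop of a quantum edge.

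The main obstacle I anticipate is the bookkeeping in the circular-order condition for case (2), specifically converting "no $l$ with $i < l < \overline\imath$ and $w(i) \prec w(l) \prec w(\overline\imath)$" into the clean containment $w[k+1,n] \subseteq \{a+1,\dots,n\}$: one has to carefully locate, on the circle $1,2,\dots,n,\overline n,\dots,\overline 1$, the arc strictly between $w(k) = \overline a$ and $w(\overline k) = a$ read in the $\prec_{\overline a}$ direction, observe it consists of $\{\overline{a-1},\dots,\overline 1, 1, 2, \dots, a\}$... no — rather the arc from $\overline a$ going clockwise to $a$, which is $\{\overline{a-1},\overline{a-2},\dots,\overline 1\}\cup\{1,2,\dots,a-1\}$ wait, I must re-examine: the forbidden values for $w(l)$ are those in the open arc, and the increasing run $w(k+1) < \cdots < w(n)$ (all with $|w(l)| \ne a$ since $w$ is injective and $w(\overline k) = a$, $w(k)=\overline a$) avoids this arc iff all $w(l)$ are unbarred and $> a$, equivalently lie in $\{a+1,\dots,n\}$. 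I also need to double-check the sign/length conditions (Bruhat vs. quantum) in each subcase via $\ell(ws_\alpha) = \ell(w) \pm 1 - 2\pair{\rho}{\alpha^\vee}[\text{quantum}]$; these are routine but must be stated. The converse direction (conditions (1) or (2) with $w \ne e$ $\Rightarrow$ quantum edge) is then immediate by running the same criteria backwards, and the whole argument is short because, as noted in the paper, it rests entirely on Proposition~\ref{prop:quantum_bruhat_order_type_C} together with the $W^J$-description of $w$.
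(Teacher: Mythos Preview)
Your approach --- case analysis via Proposition~\ref{prop:quantum_bruhat_order_type_C} --- is exactly what the paper intends (it only remarks that the lemma ``is easily proved based on the quantum Bruhat graph criterion in Section~\ref{sec:qbg}'', giving no details). However, two concrete points in your execution need fixing.

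First, your description of $W^J$ is incomplete: beyond $w[1,k]$ and $w[k+1,n]$ being increasing, you also need that the entries of $w[k+1,n]$ are \emph{positive} (unbarred); see the paragraph before Theorem~\ref{qkchev-c}. Without this, your arc computation for $\alpha=(k,\overline{k})$ breaks: the open arc from $\overline{a}$ to $a$ in $\prec_{\overline a}$ is $\{\overline{a-1},\dots,\overline 1,1,\dots,a-1\}$, so a value $w(l)=\overline{b}$ with $b>a$ would avoid the arc yet not lie in $\{a+1,\dots,n\}$. Only after using positivity does ``no straddling for $k<l\le n$'' reduce cleanly to $w[k+1,n]\subseteq\{a+1,\dots,n\}$.

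Second, your list of roots in $\Phi^+\setminus\Phi_J^+$ omits the long roots $(i,\overline{\imath})=2\varepsilon_i$ for $i<k$ (the simple-root expansion of $2\varepsilon_i$ contains $\alpha_k$ whenever $i\le k$, not only for $i=k$). You must rule these out: a quantum edge would force $w(i)=\overline{a}$ for some $a$, but then $w(i+1)\in\{\overline{a-1},\dots,\overline{1}\}$ (since $w[1,k]$ is increasing and $w(i+1)>w(i)=\overline a$), and this value lies in the forbidden arc, so no edge exists. A smaller point: the reason $(i,\overline{\jmath})$-edges are always Bruhat is not a $\pair{\rho}{\alpha^\vee}$ computation but simply that the hypotheses $w(i)<w(\overline{\jmath})$ with equal signs in Proposition~\ref{prop:quantum_bruhat_order_type_C}(2) directly force $w\alpha>0$.
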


Let us now turn to a short proof in the case of $\varpi_k$ in type $C_n$, where $1\le k\le n$. Note that $w\in W^J$ is equivalent to $w[1,k]$ and $w[k+1,n]$ being increasing sequences (with respect to the total order on $[\overline{n}]$), as well as $w[k+1,n]$ consisting of positive entries. We also need to introduce more notation. The $(-\varpi_k)$-chain $\Gamma(k)$ in \eqref{omegachain-c} has an obvious splitting $\Gamma(k)=\Gamma^1(k)\Gamma^2(k)$, where $\Gamma^1(k):=\Gamma_2'\cdots\Gamma_k'$ and $\Gamma^2(k):=\Gamma_1(k)\cdots\Gamma_k(k)$. This induces a splitting $A=A^1\sqcup A^2$ of any $w$-admissible subset $A$, where $A^i=A\cap\Gamma^i(k)$, for $i=1,2$. 

\begin{thm}\label{qkchev-c} In type $C_n$, given $w\in W^J$, we have the following cancellation-free formula in $QK_{T}^{\mathrm{poly}}(G/P_{J}) \subset QK_{T}(G/P_{J})$:
\begin{align}\label{typec-chev}[\cO(-\varpi_k)]\cdot[\cO^w]&=
\sum_{A\in\A_{\lessdot}(w,\Gamma(k))}(-1)^{|A|}\mathbf{e}^{-\wt(w,A)}[\cO^{\wend(w,A)}]\\
&-Q_k\sum_{\substack{A\in\A_{\lessdot}(w,\Gamma(k)) \\ \wend(w,A^1)\ge\lfloor s_\theta\rfloor}}(-1)^{|A|}\mathbf{e}^{-\wt(w,A)}[\cO^{\lfloor\wend(w,A)s_{2\varepsilon_k}\rfloor}]\,.\nonumber
\end{align}
\end{thm}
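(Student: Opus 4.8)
The plan is to follow the same strategy as in the type $A$ case (Theorem~\ref{qkchev-a}): start from the uniform Chevalley formula \eqref{qkchev-f} for $QK_T(G/B)$ with the $(-\varpi_k)$-chain $\Gamma(k)$ of \eqref{omegachain-c}, apply the $\mathbb{Z}[\Lambda]$-module surjection $\Phi_J$, and then exhibit an explicit sign-reversing involution on the set of $w$-admissible subsets $A \in \A(w,\Gamma(k))$ with $A^- \ne \emptyset$ that are not already of the "surviving" form appearing on the right-hand side of \eqref{typec-chev}. By Remark~\ref{nocancel}, no cancellation occurs among the $A$ with $A^- = \emptyset$, so the first sum in \eqref{typec-chev} is automatically cancellation-free; the entire difficulty is concentrated in the quantum terms. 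First I would use Lemma~\ref{paredgec} to pin down exactly which edges of $\Pi(w,A)$ can be quantum after projecting to $W^J$: only $\alpha_k$ or $(k,\overline{k}) = 2\varepsilon_k$ can occur (the latter only under the entry condition on $w(k)$ and $w[k+1,n]$). Since $\Phi_J$ sends $Q^\xi \mapsto Q_k^{c_k}$, a step along $\alpha_k$ contributes a factor $Q_k$, while a step along $2\varepsilon_k$ contributes $Q_k^{\langle \rho, (2\varepsilon_k)^\vee\rangle} = Q_k$ as well; crucially the coefficient of $Q_k^{\ge 2}$ must vanish after cancellations, since only a single $Q_k$ appears in \eqref{typec-chev}, and so must the contributions of $\alpha_k$-quantum steps entirely (they are to be matched against $2\varepsilon_k$-quantum steps or with Bruhat steps inside $\Gamma_j(k)$).

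The heart of the argument is locating, within the chain $\Gamma(k) = \Gamma^1(k)\Gamma^2(k)$, the positions of the roots $\alpha_k = (k,k+1)$ and $(k,\overline{k})$ — both of which sit in the block $\Gamma_k(k)$ of \eqref{omegachain2-c}, with $(k,\overline{k})$ appearing before the run $(k,n),(k,n-1),\ldots,(k,k+1)$ — and then defining the involution $A \leftrightarrow A'$ by inserting or deleting a suitable root immediately adjacent (in chain order) to the quantum step, exactly as in Cases~1 and~2 of the proof of Theorem~\ref{qkchev-a}. Concretely: given $A$ whose path has a quantum step along $(k,\overline{k})$ or along $\alpha_k$, I would look at $w' := \wend(w,A^1)$ (or the relevant intermediate element) and, according to whether $w'$ satisfies the "$\wend(w,A^1) \ge \mcr{s_\theta}$" condition, either pair $A$ with $A \setminus \{\text{quantum root}\} \cup \{\text{a neighbouring Bruhat root}\}$ or with $A \setminus \{\text{quantum root}\}$ together with the downstream adjustment, checking in each case that $\wend$, $\dn$, and $\wt$ are preserved while $|A|$ changes parity. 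Preservation of $\wt$ follows, as in the type $A$ proof, from the fact that the affine reflections entering $\wt(w,\cdot)$ fix $\varpi_k$ (cf.~\cite[Corollary~8.2]{lapawg}); preservation of $\dn$ and of $\wend$ after projection is a braid-type identity in $W/W_J$ analogous to $\lfloor \cdots (k,q)(k,k+1) \rfloor = \lfloor \cdots (k,k+1)(k+1,q)\rfloor$, now in the signed-permutation setting, handled via Proposition~\ref{prop:quantum_bruhat_order_type_C}.

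The main obstacle, and the place where type $C$ genuinely differs from type $A$, is the block structure $\Gamma^1(k) = \Gamma_2'\cdots\Gamma_k'$, which has no counterpart in \eqref{omegakchain}: the admissible subset $A^1 \subset \Gamma^1(k)$ can already record a nontrivial (Bruhat) path before $\Gamma^2(k)$ even begins, and whether a later $(k,\overline{k})$-step is quantum depends on the element $\wend(w,A^1)$ reached at that point, not merely on $w$. So the condition "$\wend(w,A^1) \ge \mcr{s_\theta}$" in \eqref{typec-chev} is intrinsic and cannot be rephrased purely in terms of entries of $w$ the way the condition "$w(k)=n,\ w(k+1)=1$" was in type $A$; I expect to need a careful case analysis splitting on (a) whether the path has a quantum step, (b) which root it is along, and (c) the value of $\wend(w,A^1)$, verifying that in every case either the term survives in exactly one of the two sums in \eqref{typec-chev}, or it is matched by the involution. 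A secondary technical point is making sure the involution's inserted/deleted root stays within $\Gamma^2(k)$ (so that $A^1$, and hence the surviving-term condition, is untouched) — this should hold because both $\alpha_k$ and $(k,\overline{k})$ and their chain-neighbours all lie in the block $\Gamma_k(k) \subset \Gamma^2(k)$. Once all quantum terms with a $(k,\overline{k})$-step and the correct $\wend(w,A^1)$ condition are seen to be precisely the uncanceled ones contributing to the $Q_k$-sum, and everything else (including all $\alpha_k$-quantum steps and all $Q_k^{\ge 2}$ contributions) cancels in pairs, collecting the surviving terms yields \eqref{typec-chev}.
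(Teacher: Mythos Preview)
Your overall strategy matches the paper's: apply $\Phi_J$ to \eqref{qkchev-f}, use Lemma~\ref{paredgec} to restrict the possible quantum edges to $\alpha_k$ and $(k,\overline{k})$, and build a sign-reversing involution on the non-surviving quantum terms, with weight preservation coming from the fact that the affine hyperplanes attached to $\Gamma^2(k)$ all contain $\varpi_k$. Your emphasis on the condition $\wend(w,A^1)\ge\mcr{s_\theta}$ being intrinsic (not readable from $w$ alone) and on keeping the involution inside $\Gamma^2(k)$ is exactly right.

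That said, two concrete points in your sketch do not line up with what actually happens. First, there is nothing to cancel at order $Q_k^{\ge 2}$: the paper shows directly that $|A^-|\le 1$, because once a quantum step along $(k,\overline{k})$ is taken from $u\in W^J$, the condition $u[k+1,n]\subseteq\{a+1,\ldots,n\}$ forces $u(k,\overline{k})[k,n]$ to be increasing inside $[1,n]$, so a subsequent quantum $\alpha_k$-step is impossible. Thus $A^-$ is exactly one of $\emptyset$, $\{(k,k+1)\}$, $\{(k,\overline{k})\}$, and the task is to pair the latter two families. Second, the involution is not always a single insert/delete adjacent to the quantum root. In one of the paper's cases (namely $v(k)=\overline{b}$, $v(k+1)=a$ with $a<b$, and either $k+2>n$ or $v(k+2)>b$), the pairing sends $A$ with $A^-=\{(k,k+1)\}$ to $A'=(A\setminus\{(k,k+1)\})\cup\{(k,\overline{k+1}),(k,\overline{k})\}$, trading one root for two; this is how certain $\alpha_k$-quantum terms get matched with $(k,\overline{k})$-quantum terms whose paths have a different shape. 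Your ``insert or delete one neighbouring root'' recipe would not produce this move.

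Finally, the residual $(k,\overline{k})$-quantum terms (those not already matched above) are handled by inserting $(p,\overline{k})$ taken from the \emph{first row of $\Gamma_p(k)$} for some $p<k$ (not from $\Gamma_k(k)$ as you suggest), where $p$ is chosen so that $u(p)=a-1$ when $u(k)=\overline{a}$ with $a>1$. The surviving terms are precisely those with $u(k)=\overline{1}$ and no $(i,\overline{k})$ in $A^2$; the paper then checks $u(k)=\wend(w,A^1)(k)$ and invokes Deodhar's criterion to identify this with $\wend(w,A^1)\ge\mcr{s_\theta}$, giving the second sum in \eqref{typec-chev}.
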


\begin{rema}
As will be seen in the proof below, for $w \in W^{J}$, the condition $w \geq \mcr{s_{\theta}}$ is equivalent to the condition $w(k) = \overline{1}$. 
\end{rema}

\begin{exa}
In this example, we consider the case that $n = 3$ and $k = 2$. Note that $\mcr{s_{\theta}} = s_{1}s_{2}s_{3}s_{2}$. Recall that $\Gamma(2) = ((1,\overline{2}), (1, \overline{3}), (1, \overline{1}), (1, 3), (1, \overline{2}), (2, \overline{3}), (2, \overline{2}), (2, 3))$ (with all roots negated). 
Let $w = s_{2}s_{3}s_{2}$. Then the list of all admissible subsets $A \in \A(w, \Gamma(2))$ and their statistics $\wend(w, A)$, $\dn(w, A)$, together with $\wend(w, A^{1})$, $\mcr{\wend(w, A)}$, is given in Table~\ref{tab:exm_typeC_Grassmannian}. Note that $\wt(w, A) = -s_{2}s_{3}s_{2}\vpi_{2}$ for all $A \in \A(w, \Gamma(2))$. 
\begin{table}[ht]
\caption{The list of all admissible subsets $A \in \A(s_{2}s_{3}s_{2}, \Gamma(2))$}
\label{tab:exm_typeC_Grassmannian}
\centering
\begin{tabular}{|c|cccc|} \hline 
$A$ & $\wend(w, A^{1})$ & $\wend(w, A)$ & $\mcr{\wend(w, A)}$ & $\dn(w, A)$ \\ \hline
$\emptyset$ & $s_{2}s_{3}s_{2}$ & $s_{2}s_{3}s_{2}$ & $s_{2}s_{3}s_{2}$ & $0$ \\ 
$\{1\}$ & $s_{1}s_{2}s_{3}s_{2}$ & $s_{1}s_{2}s_{3}s_{2}$ & $s_{1}s_{2}s_{3}s_{2}$ & $0$ \\ 
$\{4\}$ & $s_{2}s_{3}s_{2}$ & $s_{2}s_{3}s_{1}s_{2}$ & $s_{2}s_{3}s_{1}s_{2}$ & $0$ \\ 
$\{5\}$ & $s_{2}s_{3}s_{2}$ & $s_{1}s_{2}s_{3}s_{2}$ & $s_{1}s_{2}s_{3}s_{2}$ & $0$ \\ 
$\{7\}$ & $s_{2}s_{3}s_{2}$ & $e$ & $e$ & $\alpha_{2}^{\vee} + \alpha_{3}^{\vee}$ \\ 
$\{8\}$ & $s_{2}s_{3}s_{2}$ & $s_{2}s_{3}$ & $s_{2}$ & $\alpha_{2}^{\vee}$ \\ 
$\{1, 4\}$ & $s_{1}s_{2}s_{3}s_{2}$ & $s_{1}s_{2}s_{3}s_{1}s_{2}$ & $s_{1}s_{2}s_{3}s_{1}s_{2}$ & $0$ \\ 
$\{1, 7\}$ & $s_{1}s_{2}s_{3}s_{2}$ & $s_{1}$ & $e$ & $\alpha_{2}^{\vee} + \alpha_{3}^{\vee}$ \\ 
$\{1, 8\}$ & $s_{1}s_{2}s_{3}s_{2}$ & $s_{1}s_{2}s_{3}$ & $s_{1}s_{2}$ & $\alpha_{2}^{\vee}$ \\ 
$\{4, 6\}$ & $s_{2}s_{3}s_{2}$ & $s_{1}s_{2}s_{3}s_{1}s_{2}$ & $s_{1}s_{2}s_{3}s_{1}s_{2}$ & $0$ \\ 
$\{4, 8\}$ & $s_{2}s_{3}s_{2}$ & $s_{2}s_{3}s_{1}$ & $s_{2}$ & $\alpha_{2}^{\vee}$ \\ 
$\{5, 7\}$ & $s_{2}s_{3}s_{2}$ & $s_{1}$ & $e$ & $\alpha_{2}^{\vee} + \alpha_{3}^{\vee}$ \\ 
$\{5, 8\}$ & $s_{2}s_{3}s_{2}$ & $s_{1}s_{2}s_{3}$ & $s_{1}s_{2}$ & $\alpha_{2}^{\vee}$ \\ 
$\{7, 8\}$ & $s_{2}s_{3}s_{2}$ & $s_{2}$ & $s_{2}$ & $\alpha_{2}^{\vee} + \alpha_{3}^{\vee}$ \\ 
$\{1, 4, 7\}$ & $s_{1}s_{2}s_{3}s_{2}$ & $s_{2}s_{1}$ & $s_{2}$ & $\alpha_{2}^{\vee} + \alpha_{3}^{\vee}$ \\ 
$\{1, 4, 8\}$ & $s_{1}s_{2}s_{3}s_{2}$ & $s_{1}s_{2}s_{3}s_{1}$ & $s_{1}s_{2}$ & $\alpha_{2}^{\vee}$ \\ 
$\{1, 7, 8\}$ & $s_{1}s_{2}s_{3}s_{2}$ & $s_{1}s_{2}$ & $s_{1}s_{2}$ & $\alpha_{2}^{\vee} + \alpha_{3}^{\vee}$ \\ 
$\{4, 6, 7\}$ & $s_{2}s_{3}s_{2}$ & $s_{2}s_{1}$ & $s_{2}$ & $\alpha_{2}^{\vee} + \alpha_{3}^{\vee}$ \\ 
$\{4, 6, 8\}$ & $s_{2}s_{3}s_{2}$ & $s_{1}s_{2}s_{3}s_{1}$ & $s_{1}s_{2}$ & $\alpha_{2}^{\vee}$ \\ 
$\{5, 7, 8\}$ & $s_{2}s_{3}s_{2}$ & $s_{1}s_{2}$ & $s_{1}s_{2}$ & $\alpha_{2}^{\vee} + \alpha_{3}^{\vee}$ \\ 
$\{1, 4, 7, 8\}$ & $s_{1}s_{2}s_{3}s_{2}$ & $s_{1}s_{2}s_{1}$ & $s_{1}s_{2}$ & $\alpha_{2}^{\vee} + \alpha_{3}^{\vee}$ \\ 
$\{4, 6, 7, 8\}$ & $s_{2}s_{3}s_{2}$ & $s_{1}s_{2}s_{1}$ & $s_{1}s_{2}$ & $\alpha_{2}^{\vee} + \alpha_{3}^{\vee}$ \\ \hline
\end{tabular}
\end{table}

By Theorem~\ref{qkchev}, in $QK_{T}^{\mathrm{poly}}(G/B)$, we have: 
\begin{equation} \label{eq:Chevalley_full_typeC}
\begin{split}
& [\cO(-\vpi_{2})] \cdot [\cO^{s_{2}s_{3}s_{2}}] \\ 
&= \be^{s_{2}s_{3}s_{2}\vpi_{2}} ([\cO^{s_{2}s_{3}s_{2}}] - [\cO^{s_{1}s_{2}s_{3}s_{2}}] - [\cO^{s_{2}s_{3}s_{1}s_{2}}] - [\cO^{s_{1}s_{2}s_{3}s_{2}}] - Q_{2}Q_{3}[\cO^{e}] - Q_{2}[\cO^{s_{2}s_{3}}] \\ 
& \quad + [\cO^{s_{1}s_{2}s_{3}s_{1}s_{2}}] + Q_{2}Q_{3}[\cO^{s_{1}}] + Q_{2}[\cO^{s_{1}s_{2}s_{3}}] + [\cO^{s_{1}s_{2}s_{3}s_{1}s_{2}}] + Q_{2}[\cO^{s_{2}s_{3}s_{1}}] + Q_{2}Q_{3}[\cO^{s_{1}}] \\ 
& \quad + Q_{2}[\cO^{s_{1}s_{2}s_{3}}] + Q_{2}Q_{3}[\cO^{s_{2}}] - Q_{2}Q_{3}[\cO^{s_{2}s_{1}}] - Q_{2}[\cO^{s_{1}s_{2}s_{3}s_{1}}] - Q_{2}Q_{3}[\cO^{s_{1}s_{2}}] \\ 
& \quad - Q_{2}Q_{3}[\cO^{s_{2}s_{1}}] - Q_{2}[\cO^{s_{1}s_{2}s_{3}s_{1}}] - Q_{2}Q_{3}[\cO^{s_{1}s_{2}}] + Q_{2}Q_{3}[\cO^{s_{1}s_{2}s_{1}}] + Q_{2}Q_{3}[\cO^{s_{1}s_{2}s_{1}}]). 
\end{split}
\end{equation}
By applying the surjection $\Phi_{J}: QK_{T}^{\mathrm{poly}}(G/B) \rightarrow QK_{T}^{\mathrm{poly}}(G/P_{J})$ to equation~\eqref{eq:Chevalley_full_typeC}, we obtain the following cancellation-free formula in $QK_{T}^{\mathrm{poly}}(G/P_{J}) \subset QK_{T}(G/P_{J})$; here, the underlined terms in the first equality are canceled out: 
\begin{align}
\begin{split}
& [\cO(-\vpi_{2})] \cdot [\cO^{s_{2}s_{3}s_{2}}] \\ 
&= \be^{s_{2}s_{3}s_{2}\vpi_{2}} ([\cO^{s_{2}s_{3}s_{2}}] - [\cO^{s_{1}s_{2}s_{3}s_{2}}] - [\cO^{s_{2}s_{3}s_{1}s_{2}}] - [\cO^{s_{1}s_{2}s_{3}s_{2}}] \underline{{}- Q_{2}[\cO^{e}]} \underline{{}- Q_{2}[\cO^{s_{2}}]} \\ 
& \quad + [\cO^{s_{1}s_{2}s_{3}s_{1}s_{2}}] \underline{{}+ Q_{2}[\cO^{e}]} \underline{{}+ Q_{2}[\cO^{s_{1}s_{2}}]} + [\cO^{s_{1}s_{2}s_{3}s_{1}s_{2}}] \underline{{}+ Q_{2}[\cO^{s_{2}}]} + Q_{2}[\cO^{e}] \\ 
& \quad \underline{{}+ Q_{2}[\cO^{s_{1}s_{2}}]} \underline{{}+ Q_{2}[\cO^{s_{2}}]} \underline{{}- Q_{2}[\cO^{s_{2}}]} \underline{{}- Q_{2}[\cO^{s_{1}s_{2}}]} \underline{{}- Q_{2}[\cO^{s_{1}s_{2}}]} \\ 
& \quad - Q_{2}[\cO^{s_{2}}] \underline{{}- Q_{2}[\cO^{s_{1}s_{2}}]} \underline{{}- Q_{2}[\cO^{s_{1}s_{2}}]} \underline{{}+ Q_{2}[\cO^{s_{1}s_{2}}]} \underline{{}+ Q_{2}[\cO^{s_{1}s_{2}}]}) 
\end{split} \\ 
&= \be^{s_{2}s_{3}s_{2}\vpi_{2}} ([\cO^{s_{2}s_{3}s_{2}}] - 2[\cO^{s_{1}s_{2}s_{3}s_{2}}] - [\cO^{s_{2}s_{3}s_{1}s_{2}}] + 2[\cO^{s_{1}s_{2}s_{3}s_{1}s_{2}}] + Q_{2}[\cO^{e}] - Q_{2}[\cO^{s_{2}}]). 
\end{align}

Also, we deduce that $\A_{\lessdot}(w, \Gamma(2)) = \{\emptyset, \{1\}, \{4\}, \{5\}, \{1, 4\}, \{4, 6\}\}$; note that only two elements $A = \{1\}, \{1, 4\}$ of $\A_{\lessdot}(w, \Gamma(2))$ satisfy $\wend(w, A^{1}) \ge \mcr{s_{\theta}}$. Therefore, we see that 
\begin{align}
& \text{(RHS of equation~\eqref{typec-chev})} \\ 
\begin{split}
&= \be^{s_{2}s_{3}s_{2}\vpi_{2}} ([\cO^{s_{2}s_{3}s_{2}}] - [\cO^{s_{1}s_{2}s_{3}s_{2}}] - [\cO^{s_{2}s_{3}s_{1}s_{2}}] - [\cO^{s_{1}s_{2}s_{3}s_{2}}] + [\cO^{s_{1}s_{2}s_{3}s_{1}s_{2}}] + [\cO^{s_{1}s_{2}s_{3}s_{1}s_{2}}]) \\ 
& \quad - Q_{2} \be^{s_{2}s_{3}s_{2}\vpi_{2}} (- [\cO^{\mcr{s_{1}}}] + [\cO^{\mcr{s_{2}s_{1}}}])
\end{split} \\ 
&= \be^{s_{2}s_{3}s_{2}\vpi_{2}} ([\cO^{s_{2}s_{3}s_{2}}] - 2[\cO^{s_{1}s_{2}s_{3}s_{2}}] - [\cO^{s_{2}s_{3}s_{1}s_{2}}] + 2[\cO^{s_{1}s_{2}s_{3}s_{1}s_{2}}] + Q_{2}[\cO^{e}] - Q_{2}[\cO^{s_{2}}]) \\ 
&= [\cO(-\vpi_{2})] \cdot [\cO^{s_{2}s_{3}s_{2}}]; 
\end{align}
here, we have used $s_{2\ve_{2}} = s_{2}s_{3}s_{2}$ for the first equality. 
Thus Theorem~\ref{qkchev-c} holds in this case. 
\end{exa}

\begin{proof}[Proof of Theorem~\ref{qkchev-c}] We assume that $w$ is not the identity, as this case is trivial. We follow the same procedure outlined above, and describe the sign-reversing involution canceling terms obtained from the Chevalley formula for $G/B$. 

We carry out the proof in the case $k<n$, and refer to $k=n$ at the end. Consider a generic $w$-admissible subset $A$ in $\A(w,\Gamma(k))$, corresponding to a term in the Chevalley formula for $G/B$.  Like in type $A$, the structure of $\Gamma(k)$ combined with the fact that $w\in W^J$ imply that $A$ contains at most one root labeling a Bruhat cover in $\Pi(w,A)$ from each row in the display of $\Gamma_j(k)$ in \eqref{omegachain2-c}. 

We focus on those $A$ with $A^-\ne\emptyset$. By Lemma~\ref{paredgec}, we have $A^-\subseteq\{\alpha_k=(k,k+1),\,2\varepsilon_k=(k,\overline{k})\}$. Note that both of these roots appear only once in the $(-\varpi_k)$-chain $\Gamma(k)$, with $(k,k+1)$ being the last one, while $(k,\overline{k})$ appears in the last segment $\Gamma_k(k)$. In fact,  we have either $A^-=\{(k,k+1)\}$ or $A^-=\{(k,\overline{k})\}$. Indeed, assuming that $(k,\overline{k})\in A^-$, and considering the signed permutation $u$ in $\Pi(w,A)$ to which $(k,\overline{k})$ is applied, we have $u\in W^J$ and $u[k+1,n]\subseteq\{a+1,\ldots,n\}$, where $a:=|u(k)|=u(\overline{k})$; therefore, it is impossible for $(k,k+1)$ to correspond to a quantum step in $\Pi(w,A)$.

Now assume that $A^-=\{(k,k+1)\}$, and let $v:=\wend(w,A\setminus\{(k,k+1)\})$. We clearly have $v\in W^J$. We will pair $A$ with another $w$-admissible subset $A'$, such that their contributions to the parabolic Chevalley formula for $G/P_J$ cancel out. We must have one of the following cases, where $1\le a<b\le n$.

\emph{Case} 1: $v(k)=b$, $v(k+1)=a$, and $A$ does not contain $(k,j)$ with $j>k+1$.

\emph{Subcase} 1.1: $b<n$.  This case is completely similar to Case~1 in the type $A$ proof. Indeed, there clearly exists $q>k+1$ such that $v(q)=b+1\le n$. We let $A':=A\cup\{(k,q)\}$, so $(A')^-=\{(k,k+1)\}$, and continue the reasoning as above.

\emph{Subcase} 1.2: $b=n$. We let $A':=A\cup\{(k,\overline{k})\}$, and we have $(A')^-=\{(k,k+1)\}$. 

\emph{Case} 2: $v(k)=\overline{a}$, $v(k+1)=b$.  We let $A':=A\cup\{(k,\overline{k})\}$,  and we have $(A')^-=\{(k,\overline{k})\}$. 

\emph{Case} 3: $v(k)=\overline{b}$, $v(k+1)=a$, and $A$ contains neither $(k,\overline{k})$, nor $(k,\overline{\jmath})$ for $j>k+1$. 

\emph{Subcase} 3.1: $k+2\le n$ and $v(k+2)<b$. Consider $q>k+1$ largest such that $v(q)<b$. We let $A':=A\cup\{(k,\overline{q})\}$, and we have $(A')^-=\{(k,k+1)\}$. 

\emph{Subcase} 3.2: $k+2> n$ or $v(k+2)>b$. We let $A':=(A\setminus\{(k,k+1)\}) \cup\{(k,\overline{k+1}),\,(k,\overline{k})\}$, and we have $(A')^-=\{(k,\overline{k})\}$. 

We claim that in all cases, 
\[A'\in\A(w,\Gamma(k))\,,\;\;\;\lfloor\wend(w,A)\rfloor=\lfloor\wend(w,A')\rfloor\,,\;\;\;\mbox{and}\;\;\;\wt(w,A)=\wt(w,A')\,.\]
Furthermore, it is not hard to check that these cases completely pair up all $w$-admissible subsets $A$ with $A^-=\{(k,k+1)\}$, either among themselves (in Cases~1.1, 1.2, and 3.1),  or with $A$ satisfying $A^-=\{(k,\overline{k})\}$ (in Cases~2 and 3.2); see below for a discussion of the latter $A$ which are not paired up above. 

Indeed, let us consider, for instance, Case~2. We cannot have $(k,\overline{k})\in A$, because the corresponding up step in Bruhat order would not be a cover (by the classical part of the criterion in Proposition~\ref{prop:quantum_bruhat_order_type_C}~(3)). Moreover, $A$ cannot contain any root of the form $(k,j)$ with $j>k+1$, as the corresponding reflection would bring a positive entry to position $k$, whereas $v(k)$ is negative. Therefore, the roots $(k,\overline{k})$ and $(k,k+1)$ are the last two in $A'$, while the step corresponding to $(k,\overline{k})$ is a quantum one (by the criterion in Proposition~\ref{prop:quantum_bruhat_order_type_C}~(3)). Moreover, we have
\[\lfloor{\rm end}(w,A')\rfloor=\lfloor v(k,\overline{k})(k,k+1)\rfloor=\lfloor v(k,k+1)(k+1,\overline{k+1})\rfloor=\lfloor{\rm end}(w,A)\rfloor\,.\]
The weight preservation is verified by noting that all affine hyperplanes corresponding to the roots in $\Gamma^2(k)$ contain $\varpi_k$; so the corresponding affine reflections fix $\varpi_k$, and are thus irrelevant for the weight computation. 

On another hand, in the Chevalley formula for $G/B$, the quantum steps corresponding to both roots $(k,k+1)$ and $(k,\overline{k})$ contribute the variable $Q_k$. Indeed, as indicated above, we have the following coroot splitting: $(2\varepsilon_k)^\vee=\alpha_k^\vee+(\alpha_{k+1}^\vee+\cdots+\alpha_n^\vee)$. Finally, since the cardinalities of $A$ and $A'$ differ by $1$, we conclude that the involution $A\leftrightarrow A'$ is indeed sign-reversing. In this way, the contributions to the parabolic Chevalley formula for $G/P_{J}$ of all $A$ with $A^-=\{(k,k+1)\}$ are canceled. 

We have now exhausted all $w$-admissible sets $A$ with $A^-=\{(k,k+1)\}$. Thus, it remains to discuss the contributions of the remaining $A$ with $A^-\ne\emptyset$, i.e., $A^-=\{(k,\overline{k})\}$ and $A$ is not among the $A'$ in Cases~2 and 3.2. So from now on we work under this assumption. We previously considered the signed permutation $u\in W^J$ in $\Pi(w,A)$ to which $(k,\overline{k})$ is applied, and observed that $u[k+1,n]\subseteq\{a+1,\ldots,n\}$, where $a:=|u(k)|$. If $(k,\overline{k})$ is followed by another root in $A$, then this can only be $(k,k+1)$; but this situation was considered in Case~2 above, which means that $(k,\overline{k})$ must be the last root in $A$. Moreover, $A$ cannot contain any root of the form $(k,\overline{\jmath})$ with $j>k$, because we would be in Case~3.2. The following two cases cover all remaining possibilities, and we continue to use the above notation.

\emph{Case} 4: $u(k)\ne\overline{1}$ (i.e., $a\ne 1$), and  $A^2$ contains no root $(i,\overline{k})$ with $i<k$. There clearly exists $p<k$ such that $u(p)=a-1$. We let $A':=A\cup\{(p,\overline{k})\}$, where the root $(p,\overline{k})$ is taken from $\Gamma^2(k)$. We have $(A')^-=\{(k,\overline{k})\}$. Like above, we verify that the terms corresponding to $A$ and $A'$ cancel out, so we can extend the sign-reversing involution above by pairing $A$ with $A'$. 

Now recall that, in general, $A^2$ contains at most one root  $(i,\overline{k})$ with $i<k$. Whenever it contains one, the values in positions $i$ and $k$ of the signed permutation to which this reflection is applied are of the form $b-1$ and $\overline{b}$, respectively. Thus, the remaining case consists of the following $w$-admissible subsets $A$. 

\emph{Case} 5: $u(k)=\overline{1}$ (i.e., $a=1$), and $A^2$ contains no root $(i,\overline{k})$ with $i<k$. We clearly have $A\setminus\{(k,\overline{k})\}\in\A_\lessdot(w,\Gamma(k))$, where we recall that $(k,\overline{k})$ is the last root in $A$. Now let $u':=\wend(w,A^1)$. Based on the structures of $\Gamma(k)$ and $A$, we have $u'(k)=u(k)=\overline{1}$. But this is equivalent to $u'\ge\lfloor s_\theta\rfloor=[2, 3, \ldots, k, \overline{1}, k+1, \ldots, n]$ (in the window notation), by Deodhar's criterion for the type $C$ Bruhat order~\cite[Chapter~8,~Exercise~6]{babccg}. In the same way as above, we can see that $\wt(w,A)=\wt(A\setminus\{(k,\overline{k})\})$. The above facts imply that the terms corresponding to this case make up the second sum in~\eqref{typec-chev}. By Remark~\ref{nocancel}, there are no cancellations between these terms. 

We conclude by considering $k=n$, and noting that the proof reduces to Cases~4 and 5 above. 
\end{proof}

We now prove the positivity property of structure constants for isotropic Grassmannians as a corollary of Theorem~\ref{qkchev-c}. 

\begin{cor}
Let $G$ be of type $C_{n}$, and $J = I \setminus \{k\}$ for an arbitrary fixed $1 \le k \le n$. 
Then, for $w, u \in W^{J}$ and $\xi \in Q_{I \setminus J}^{\vee, +}$, we have 
\begin{equation}
(-1)^{1 + \ell(w) + \ell(u) + \deg(Q^{\xi})}N_{s_{k}, w}^{u, \xi} \in \bZ_{\ge 0}[\be^{\gamma} - 1 \mid \gamma \in -\Delta].  
\end{equation}
\end{cor}

\begin{proof}
Take $A \in \A_{\lessdot}(w, \Gamma(k))$ such that $\wend(w, A^{1}) \ge \mcr{s_{\theta}}$, and set $v := \wend(w, A)$. 
Recall from the proof of Theorem~\ref{qkchev-c} that there exists a quantum edge $v \xrightarrow{2\ve_{k}} vs_{2\ve_{k}} \in \QB(W)$. 
Also, by Case~5 in the proof of Theorem~\ref{qkchev-c}, we have $v(k) = \overline{1}$. 
Note that $v \in W^{J}$, and hence that $v(1) < \cdots < v(k)$, $v(k+1) < \cdots < v(n)$. 
It follows that $1 = vs_{2\ve_{k}}(k) < vs_{2\ve_{k}}(1) < \cdots < vs_{2\ve_{k}}(k-1)$ and $vs_{2\ve_{k}}(k+1) < \cdots < vs_{2\ve_{k}}(n)$. 
Therefore, if we take a cyclic permutation $\sigma := (1, k,  k-1, \ldots, 2) \in W$, then we have $\mcr{vs_{2\ve_{k}}} = vs_{2\ve_{k}}\sigma$. 
Hence we see that 
\begin{align}
|A|+1 &= |A \cup \{(k, \overline{k})\}| \\ 
&\equiv \ell(vs_{2\ve_{k}}) - \ell(w) \\ 
&\equiv (\ell(vs_{2\ve_{k}}\sigma) - \ell(\sigma)) - \ell(w) \\ 
&\equiv \ell(\mcr{vs_{2\ve_{k}}}) - (k-1) - \ell(w) \\ 
&\equiv \ell(w) + \ell(\mcr{vs_{2\ve_{k}}}) + k - 1
\end{align}
modulo 2. Thus, we obtain 
\begin{equation}
(-1)^{|A|+1} = (-1)^{\ell(w) + \ell(\mcr{vs_{2\ve_{k}}}) + k - 1}. 
\end{equation}

It is easy to check (see, for example, \cite[Section~3.1.5, Exercise~4]{gwrep}) that 
\begin{equation}
2\rho_{J} = \begin{cases}
\displaystyle \sum_{i = 1}^{k-1} i(k-1) \alpha_{i} + \sum_{i = 1}^{n-k-1} i(2(n-k)-i+1) \alpha_{k+1} + \dfrac{(n-k)(n-k+1)}{2} \alpha_{n} & \text{if $k \not= n$}, \\ 
\displaystyle \sum_{i = 1}^{n-1} i(n-i) \alpha_{i} & \text{if $k = n$}. 
\end{cases} 
\end{equation}
Since 
\begin{equation}
\pair{\alpha_{i}}{\alpha_{j}^{\vee}} = \begin{cases}
2 & \text{if $i = j$,} \\ 
-1 & \text{if $|i - j| = 1$ and $i \not= n$}, \\ 
-2 & \text{if $i = n$ and $j = n-1$}, \\ 
0 & \text{otherwise},
\end{cases}
\end{equation}
we have 
\begin{align}
2\pair{\rho_{J}}{\alpha_{k}^{\vee}} &= \begin{cases} 
-(k-1) - 2(n-k) & \text{if $k \not= n-1, n$}, \\ 
-(n-2) - 2 \cdot \dfrac{1 \cdot 2}{2} & \text{if $k = n-1$}, \\ 
-(n-1) & \text{if $k = n$} 
\end{cases} \\ 
&\equiv k-1 \mod 2.  
\end{align}
In addition, we have $2\pair{\rho}{\alpha_{k}^{\vee}} = 2$. Therefore, we see that 
\begin{equation}
\deg(Q_{k}) = 2\pair{\rho}{\alpha_{k}^{\vee}} - 2\pair{\rho_{J}}{\alpha_{k}^{\vee}} \equiv 2 - (k-1) \equiv k-1 \mod 2. 
\end{equation}

We set 
\begin{equation}
\A(w, \Gamma(k))_{u, \alpha_{k}^{\vee}, \lambda}^{0} := \left\{ A \in \A_{\lessdot}(w, \Gamma(k)) \ \middle| \ \begin{aligned} &\wend(w, A^{(1)}) \ge \mcr{s_{\theta}}, \ \mcr{\wend(w, A)s_{2\ve_{k}}} = u, \\ &\wt(A \cup \{(k, \overline{k})\}) = \lambda \end{aligned} \right\}.
\end{equation}
Then, since $\wt(w, A) \in -\vpi_{k}+Q^{+}$ by Lemma~\ref{lem:wt_belonging}, we deduce from Theorem~\ref{qkchev-c} that 
\begin{align}
C_{w}^{u, \alpha_{k}^{\vee}} &= \sum_{\lambda \in -\vpi_{k}+Q^{+}} \sum_{A \in \A(w, \Gamma)_{u, \alpha_{k}^{\vee}, \lambda}^{0}} (-1)^{|A|+1} Q_{k} \be^{-\wt(w, A\cup\{(k, \overline{k})\})} \\ 
&= (-1)^{\ell(w) + \ell(u) + k-1} Q_{k} \sum_{\lambda \in -\vpi_{k}+Q^{+}} |\A(w, \Gamma(k))_{u, \alpha_{k}^{\vee}, \lambda}^{0}| \be^{-\lambda}. 
\end{align}
Therefore, we obtain 
\begin{equation}
N_{s_{k}, w}^{u, \alpha_{k}^{\vee}} = (-1)^{1 + \ell(w) + \ell(u) + k-1} Q_{k} \sum_{\lambda \in -\vpi_{k}+Q^{+}} |\A(w, \Gamma(k))_{u, \alpha_{k}^{\vee}, \lambda}^{0}| \be^{-\vpi_{k}-\lambda}. 
\end{equation}
This implies that 
\begin{equation} \label{eq:positivity_typeC}
(-1)^{1 + \ell(w) + \ell(u) + \deg(Q_{k})} N_{s_{k}, w}^{u, \alpha_{k}^{\vee}} \in \bZ_{\ge 0}[\be^{\gamma}-1 \mid \gamma \in -\Delta], 
\end{equation}
as desired. 
Equation \eqref{eq:positivity_typeC}, together with the positivity property of $N_{u, v}^{w, 0}$ for $u, v, w \in W^{J}$, proves the corollary. 
\end{proof}

\section{Quantum $K$-theory Chevalley formulas for two-step flag manifolds} \label{sec:2-step}
In this section, we concentrate on the case of type $A_{n-1}$; note that $I = [n-1]$ in this case. 
Let us consider the (standard) parabolic subgroup $P_{J} \supset B$ corresponding to $J = I \setminus \{k_{1}, k_{2}\}$ for some $k_{1},  k_{2} \in I$ with $k_{1} < k_{2}$; 
the resulting partial flag manifold $G/P_{J}$ is isomorphic to a \emph{two-step flag manifold} $\Fl(k_{1}, k_{2}; n)$ defined as:
\begin{equation}
\Fl(k_{1}, k_{2}; n) := \left\{ (V_{1}, V_{2}) \ \middle| \ \parbox{21em}{$V_{1}$ and $V_{2}$ are subspaces of $\bC^{n}$ such that $V_{1} \subset V_{2}$, $\dim V_{1} = k_{1}$, and $\dim V_{2} = k_{2}$} \right\}. 
\end{equation}
The purpose of this section is to derive cancellation-free parabolic Chevalley formulas for the quantum multiplication in $QK_T(G/P_J)$ with $[\cO(-\varpi_{k})]$, for $k = k_{1}$ and $k = k_{2}$. 
For this purpose, as in Section~\ref{sec:max_parabolic}, we examine all the terms to be canceled in certain formulas obtained from equation~\eqref{qkchev-f} in Theorem~\ref{qkchev}, in $QK_T^{\mathrm{poly}}(G/B) \subset QK_{T}(G/B)$, by applying the map $\Phi_{J} : QK_T^{\mathrm{poly}}(G/B) \rightarrow QK_T^{\mathrm{poly}}(G/P_J)$. 

\subsection{Some lemmas on admissible subsets} \label{subsec:lemmas}

Note that for $w \in W = W(A_{n-1}) = S_n$, 
$w \in W^{J}$ is equivalent to $w[1, k_{1}]$, $w[k_{1}+1, k_{2}]$, and $w[k_{2}+1, n]$ being increasing sequences (see \cite[Lemma~2.4.7]{babccg}). 

We first consider the case $k = k_{1}$. 
We will make repeated use of the following. 

\begin{lem} \label{lem:quantum_edge_2-step}
Consider $w \in W^{J}$. We have an edge $w \xrightarrow{(i, j)} w(i, j)$ in the quantum Bruhat graph $\QB(W)$, with $i \le k_{1} < j$, if and only if one of the following two conditions holds: 
\begin{enumerate}
\item the edge above is a Bruhat cover, and $w(i, j) \in W^{J}$; 
\item the edge above is a quantum one, and $(i, j) = (k_{1}, k_{2}+1)$ or $(i, j) = \alpha_{k_{1}}$. 
\end{enumerate}
\end{lem}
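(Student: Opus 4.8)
The statement is the two-step analogue of Lemma~\ref{paredge}, so the plan is to mimic that proof, invoking the type $A_{n-1}$ quantum Bruhat graph criterion in Proposition~\ref{prop:quantum_bruhat_order_type_A} together with the fact that, since $w\in W^{J}$, the three windows $w[1,k_1]$, $w[k_1+1,k_2]$, and $w[k_2+1,n]$ are each increasing. Throughout, set $a:=w(i)$, and recall that the edge $w\xrightarrow{(i,j)}w(i,j)$ exists iff there is no $k$ with $i<k<j$ and $w(i)\prec w(k)\prec w(j)$, where $\prec=\prec_{a}$.

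First I would handle the Bruhat case. If the edge is a Bruhat cover, then $\ell(w(i,j))=\ell(w)+1$, and the direction $i\le k_1<j$ already forces $w(i)<w(j)$ after the swap? No --- more carefully: a Bruhat cover in $S_n$ along a transposition $(i,j)$ requires $w(i)<w(j)$ (so that the swap raises length) and no straddled value, i.e.\ no $k$ with $i<k<j$ and $w(i)<w(k)<w(j)$ in the \emph{linear} order (since $w(i)<w(j)$, the circular order $\prec_{w(i)}$ restricted to the interval $(w(i),w(j))$ agrees with the linear order). The only extra claim to verify in (1) is that ``$w(i,j)\in W^{J}$'' is automatic, i.e.\ follows from the cover condition; this is the two-step refinement of the observation in Lemma~\ref{paredge} that a Bruhat cover with $i\le k<j$ forces $w(j)=w(i)+1$. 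Here the argument is: if $w(i,j)\notin W^{J}$, then swapping $w(i)$ and $w(j)$ breaks monotonicity of one of the three windows; since $i\le k_1$ and $j>k_1$, position $i$ lies in the first window and position $j$ lies in the second or third. Breaking monotonicity of window $[1,k_1]$ at position $i$ would require some index $i<k\le k_1$ (hence $i<k<j$) with $w(i)<w(k)<w(j)$ --- a straddled value, contradicting the edge condition --- and similarly breaking monotonicity of the window containing $j$ at position $j$ produces a straddled index $k$ with $i<k<j$ and $w(i)<w(k)<w(j)$. So the cover condition alone gives $w(i,j)\in W^{J}$, and conversely (1) is clearly an instance of an edge with $i\le k_1<j$. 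I would state this as: the Bruhat edges with $i\le k_1<j$ are \emph{exactly} those of the form (1).

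Next, the quantum case. Suppose the edge is quantum, so $\ell(w(i,j))=\ell(w)+1-2\pair{\rho}{(i,j)^{\vee}}$; equivalently (using Proposition~\ref{prop:quantum_bruhat_order_type_A} and the length criterion for type $A$) $w(i)>w(j)$ and there is no $k$ with $i<k<j$ and $w(i)\prec w(k)\prec w(j)$ in the circular order $\prec_{w(i)}$. Since $w(i)>w(j)$, the ``forbidden'' window under $\prec_{w(i)}$ is $\{w(i)+1,\dots,n,1,\dots,w(j)-1\}$, i.e.\ all values \emph{except} those in $[w(j),w(i)]$. I would now argue that no value of $w$ at a position strictly between $i$ and $j$ is allowed to lie outside $[w(j),w(i)]$, and use the window-monotonicity of $w\in W^{J}$ to pin down $i$ and $j$. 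If $i<k_1$, then positions $i,i+1,\dots,k_1$ all lie in the increasing first window, so $w(i)<w(i+1)$; but $w(i+1)$ must lie in $[w(j),w(i)]$ (no straddle), forcing $w(i+1)\le w(i)$ --- contradiction unless there is no such position, i.e.\ $i=k_1$. Then $j$ lies in the second or third window. If $k_1+1<j\le k_2$, positions $k_1+1,\dots,j$ are increasing, so $w(j-1)<w(j)$; but $w(j-1)\in[w(j),w(i)]$ forces $w(j-1)\ge w(j)$, contradiction, unless $j=k_1+1$, giving $(i,j)=\alpha_{k_1}$. If $j>k_2$, positions $k_2+1,\dots,j$ are increasing, so $w(j-1)<w(j)$, which as before forces $j=k_2+1$ --- \emph{unless} $j-1\le k_2$, i.e.\ $j=k_2+1$; but one must also rule out straddling by values in the second window $[k_1+1,k_2]$: for $j=k_2+1$, any position $k$ with $k_1<k\le k_2$ has $w(k)$ which must lie in $[w(j),w(i)]$, and since window $[k_1+1,k_2]$ is increasing this is consistent only if \ldots here I expect a short case check that no straddle occurs exactly when $(i,j)=(k_1,k_2+1)$. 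Thus the quantum edges with $i\le k_1<j$ are exactly $(k_1,k_1+1)$ and $(k_1,k_2+1)$, which is (2). Conversely, I would check that $(k_1,k_1+1)=\alpha_{k_1}$ and $(k_1,k_2+1)$ do give quantum edges from every $w\in W^{J}$ with the relevant descent (i.e.\ $w(k_1)>w(k_1+1)$, resp.\ $w(k_1)>w(k_2+1)$): for $\alpha_{k_1}$ this is standard; for $(k_1,k_2+1)$, the absence of straddle follows because every intermediate value $w(k)$ with $k_1<k<k_2+1$ lies in the second window and, by $w\in W^{J}$ applied across the windows, cannot fall strictly between $w(k_2+1)$ and $w(k_1)$ --- this is the crux and I would isolate it as the key sublemma. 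Assembling the Bruhat and quantum cases yields the ``if and only if''.

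\textbf{Main obstacle.} The delicate point is the quantum direction with $j>k_2$, where one must show the only surviving transposition is $(k_1,k_2+1)$: unlike the one-step case there is now a \emph{middle} window $[k_1+1,k_2]$ whose entries could in principle straddle $w(i)$ and $w(j)$, and one must verify both that no straddle forces $j$ strictly larger than $k_2+1$, and that for $j=k_2+1$ itself no middle-window entry straddles --- this is exactly where the structure of $W^{J}$ (three increasing runs, but with no constraint \emph{between} runs) has to be exploited carefully. The converse direction, checking $(k_1,k_2+1)$ really is a quantum edge, requires the same middle-window analysis and is essentially the same computation.
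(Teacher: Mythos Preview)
Your forward-direction argument is essentially the paper's: use the three increasing windows and Proposition~\ref{prop:quantum_bruhat_order_type_A} to force $i=k_1$ and $j\in\{k_1+1,k_2+1\}$ in the quantum case. The paper picks $w(k_1)$, $w(k_1+1)$, $w(k_2+1)$ as the straddled values rather than your $w(i+1)$, $w(j-1)$, but that is the same idea. For the Bruhat case the paper is slightly slicker: instead of your direct check that breaking monotonicity forces a straddle, it simply cites \cite[Corollary~2.5.2]{babccg}, which says that a Bruhat cover of $w\in W^J$ by a reflection not in $W_J$ automatically lands in $W^J$.

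Your ``main obstacle'' is a misreading of the statement. Conditions (1) and (2) each begin ``the edge above is \ldots'', so they already presuppose the existence of the edge; the ``if'' direction is therefore vacuous, and the lemma is purely a classification of the edges that do exist. In particular you do \emph{not} need to show that $(k_1,k_2+1)$ is a quantum edge for every $w\in W^J$ (it is not --- this is exactly condition~(Q) later in the paper), nor do you need to worry, once you have pinned down $j=k_2+1$, about whether middle-window entries straddle: you are assuming the edge exists, so by hypothesis nothing straddles. Drop the ``Conversely'' paragraph and the parenthetical case-check for $j=k_2+1$, and your argument is complete and matches the paper's.
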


\begin{proof}
As in the proof of Lemma~\ref{paredge}, we implicitly use Proposition~\ref{prop:quantum_bruhat_order_type_A}, as well as the fact that $w[1, k_{1}]$, $w[k_{1}+1, k_{2}]$, and $w[k_{2}+1, n]$ are increasing sequences. 
Assume first that the edge above is a Bruhat cover. Then, since $(i, j) \notin W_{J}$, \cite[Corollary~2.5.2]{babccg} implies that $w(i, j) \in W^{J}$, as desired. 
Assume next that the edge above is a quantum one; note that $w(i) > w(j)$ in this case. If $i < k_{1}$, then the value $w(k_{1})$ would be straddled between $w(i)$ and $w(j)$. Hence we must have $i = k_{1}$. Also, if $k_{1}+1 < j \leq k_{2}$, then the value $w(k_{1}+1)$ would be straddled between $w(k_{1})$ and $w(j)$; if $j > k_{2}+1$, then the value $w(k_{2}+1)$ would be straddled between $w(k_{1})$ and $w(j)$. Hence we must have $j = k_{1}+1$ or $j = k_{2}+1$. This proves the lemma. 
\end{proof}

\begin{lem} \label{lem:quantum_edge_2-step_2}
Consider $w \in W^{J}$, and assume that we have a quantum edge $w \xrightarrow{(k_{1}, k_{2}+1)} w(k_{1}, k_{2}+1)$ in $\QB(W)$. 
Then, for $k_{1}+1 \leq j \leq k_{2}$, we have an edge $w(k_{1}, k_{2}+1) \xrightarrow{(k_{1}, j)} w(k_{1}, k_{2}+1)(k_{1}, j)$ in $\QB(W)$ if and only if $j = k_{1}+1$. 
In this case, the edge $w(k_{1}, k_{2}+1) \xrightarrow{(k_{1}, j)} w(k_{1}, k_{2}+1)(k_{1}, j)$ is a Bruhat cover. 
\end{lem}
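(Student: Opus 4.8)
The plan is to work in one-line notation with the signed permutation $w' := w(k_1,k_2+1)$, using throughout that $w \in W^J$ (so $w[1,k_1]$, $w[k_1+1,k_2]$, $w[k_2+1,n]$ are increasing) and that the edge $w \xrightarrow{(k_1,k_2+1)} w'$ is quantum, which by Proposition~\ref{prop:quantum_bruhat_order_type_A} forces $w(k_1) > w(k_2+1)$ with no value straddled; equivalently, by the analysis already used in Lemma~\ref{lem:quantum_edge_2-step}, no entry of $w$ in positions between $k_1$ and $k_2+1$ lies (in circular order) between $w(k_2+1)$ and $w(k_1)$. First I would record the effect of the transposition on the relevant entries: $w'(k_1) = w(k_2+1)$, $w'(k_2+1) = w(k_1)$, and $w'(l) = w(l)$ for all other $l$; in particular $w'[k_1+1,k_2] = w[k_1+1,k_2]$ is still increasing, and $w'(k_1) = w(k_2+1)$ is now \emph{smaller} than every entry of $w'[k_1+1,k_2]$, since those entries (being squeezed between positions $k_1$ and $k_2+1$ in $w$) cannot lie in the circular arc from $w(k_2+1)$ to $w(k_1)$, hence must all exceed $w(k_2+1)$ in the usual order — here one must be slightly careful translating the circular-order straddling condition into a linear inequality, but it comes down to the fact that the arc avoided is precisely $\{w(k_2+1)+1, \dots, w(k_1)-1\}$ together with the endpoints, so the entries in between are either all $< w(k_2+1)$ or all $> w(k_1)$; and the increasing condition on $w[1,k_1]$ versus $w[k_1+1,k_2]$ forces the latter.

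Next I would apply Proposition~\ref{prop:quantum_bruhat_order_type_A} to $w'$ and the transposition $(k_1,j)$ for $k_1+1 \le j \le k_2$, asking when there is no $i$ with $k_1 < i < j$ such that $w'(k_1) \prec w'(i) \prec w'(j)$. Since $w'(k_1+1) < w'(k_1+2) < \cdots < w'(k_2)$ and all these entries exceed $w'(k_1)$, for $j \ge k_1+2$ the position $i = k_1+1$ satisfies $w'(k_1) < w'(k_1+1) < w'(j)$ in the usual order, and because all three values lie in an interval on the circle not wrapping around (they are all $\ge w'(k_1)$ and $w'(k_1+1) < w'(j)$), this is exactly the straddling condition in circular order $\prec_{w'(k_1)}$. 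Hence for every $j$ with $k_1+2 \le j \le k_2$ the edge fails to exist, leaving only $j = k_1+1$. For $j = k_1+1$ there is no position $i$ strictly between $k_1$ and $k_1+1$, so the straddling condition is vacuous and the edge $w' \xrightarrow{(k_1,k_1+1)} w'(k_1,k_1+1)$ does exist.

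Finally I would check that this surviving edge is a Bruhat cover rather than a quantum edge. Since $w'(k_1) = w(k_2+1) < w'(k_1+1)$, applying the transposition $(k_1,k_1+1)$ to $w'$ swaps an ascent into a descent, increasing the number of inversions by exactly one; equivalently $\ell(w'(k_1,k_1+1)) = \ell(w') + 1$, which is precisely condition~(i) in the definition of $\QB(W)$, so the edge is Bruhat. I would also note for safety that a quantum edge labeled $(k_1,k_1+1) = \alpha_{k_1}$ would require $w'(k_1) > w'(k_1+1)$, contradicting what we established. The main obstacle I anticipate is purely bookkeeping: carefully justifying the passage between the circular-order condition $w'(k_1) \prec w'(k_1+1) \prec w'(j)$ and the plain inequality $w'(k_1) < w'(k_1+1) < w'(j)$, i.e.\ verifying that the relevant circular arc does not wrap past $n$ and back to $1$; this hinges on the fact that $w'(k_1) = w(k_2+1)$ is the minimum of the values in positions $k_1, k_1+1, \dots, k_2$, which follows from the $W^J$-condition on $w$ combined with the straddling constraint coming from the hypothesized quantum edge $w \xrightarrow{(k_1,k_2+1)} w'$.
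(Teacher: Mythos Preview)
Your approach is essentially the same as the paper's: establish that $v := w(k_1,k_2+1)$ satisfies $v(k_1) < v(k_1+1) < \cdots < v(k_2)$, then observe that for $j \ge k_1+2$ the value $v(k_1+1)$ is straddled, while for $j = k_1+1$ the condition is vacuous and the resulting edge is a Bruhat cover since $v(k_1) < v(k_1+1)$.

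However, your circular-order bookkeeping in the first paragraph has a sign error. For the quantum edge $w \xrightarrow{(k_1,k_2+1)} v$, the no-straddling condition says that no $w(k)$ with $k_1 < k < k_2+1$ satisfies $w(k_1) \prec w(k) \prec w(k_2+1)$ in the order $\prec_{w(k_1)}$; since $w(k_1) > w(k_2+1)$, this forbidden arc is $\{w(k_1)+1,\dots,n,1,\dots,w(k_2+1)-1\}$. Hence each such $w(k)$ must lie in the \emph{complementary} interval $\{w(k_2+1)+1,\dots,w(k_1)-1\}$, i.e.\ $w(k_2+1) < w(k) < w(k_1)$. You asserted the opposite (that the entries are either all $< w(k_2+1)$ or all $> w(k_1)$), which is false: e.g.\ $n=5$, $k_1=2$, $k_2=3$, $w = 14325$ has $w(3)=3$ strictly between $w(4)=2$ and $w(2)=4$. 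The correct inequality $w(k_2+1) < w(k)$ is exactly what you need for $v(k_1) < v(k)$, so the rest of your argument survives unchanged once this is fixed; the paper simply records the full chain $v(k_1) < v(k_1+1) < \cdots < v(k_2) < v(k_2+1)$ directly from this observation together with $w \in W^J$.
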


\begin{proof}
Set $v := w(k_{1}, k_{2}+1)$. Since we have a quantum edge $w \xrightarrow{(k_{1}, k_{2}+1)} v$ in $\QB(W)$, Proposition~\ref{prop:quantum_bruhat_order_type_A} implies the following: 
\begin{equation}
v(k_{1}) < v(k_{1}+1) < v(k_{1}+2) < \cdots < v(k_{2}) < v(k_{2}+1). 
\end{equation}
If $k_{1}+1 < j \le k_{2}$, then the value $v(k_{1}+1)$ would be straddled between $v(k_{1})$ and $v(j)$. Hence we must have $j = k_{1}+1$. In this case, by Proposition~\ref{prop:quantum_bruhat_order_type_A}, we have a Bruhat edge $v \xrightarrow{(k_{1}, k_{1}+1)} v(k_{1}, k_{1}+1)$. This proves the lemma. 
\end{proof}

As a corollary of Lemmas~\ref{lem:quantum_edge_2-step} and \ref{lem:quantum_edge_2-step_2}, we immediately obtain the following. 

\begin{lem} \label{lem:containing_quantum_steps}
Let $w \in W^{J}$, and take $A = \{j_{1} < \cdots < j_{s}\} \in \A(w, \Gamma(k_{1}))$. If the directed path $\Pi(w, A)$ contains a quantum edge, then $\Pi(w, A)$ is one of the following forms; here $\xrightarrow[\sB]{}$ indicates a Bruhat edge, while $\xrightarrow[\sQ]{}$ indicates a quantum edge: 
\begin{enumerate}
\item $\Pi(w, A): w = w_{0} \xrightarrow[\sB]{} \cdots \xrightarrow[\sB]{} w_{s-1} \xrightarrow[\sQ]{(k_{1}, k_{1}+1)} w_{s}$; 
\item $\Pi(w, A): w = w_{0} \xrightarrow[\sB]{} \cdots \xrightarrow[\sB]{} w_{s-1} \xrightarrow[\sQ]{(k_{1}, k_{2}+1)} w_{s}$; 
\item $\Pi(w, A): w = w_{0} \xrightarrow[\sB]{} \cdots \xrightarrow[\sB]{} w_{s-2} \xrightarrow[\sQ]{(k_{1}, k_{2}+1)} w_{s-1} \xrightarrow[\sB]{(k_{1}, k_{1}+1)} w_{s}$. 
\end{enumerate}
\end{lem}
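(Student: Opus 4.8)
The statement is declared to be an immediate corollary of Lemmas~\ref{lem:quantum_edge_2-step} and~\ref{lem:quantum_edge_2-step_2}, so the proof is really a short bookkeeping argument organizing what those two lemmas already give us about quantum edges in $\Pi(w, A)$. First I would recall the structure of the $(-\varpi_{k_1})$-chain $\Gamma(k_1)$ from~\eqref{omegakchain}: every root in it is of the form $(i, j)$ with $i \le k_1 < j$, and for each fixed $i \le k_1$ the roots $(i, n), (i, n-1), \ldots, (i, k_1+1)$ appear consecutively and in that order, with the blocks ordered by increasing $i$. Since $w \in W^J$, the same analysis as in the proof of Lemma~\ref{qkchev-a}'s Theorem (or Lemma~\ref{paredge}) shows that at each step of $\Pi(w, A)$ the ambient permutation stays in $W^J$ for Bruhat steps, and Lemma~\ref{lem:quantum_edge_2-step} pins down exactly which steps can be quantum: a quantum edge labeled $(i,j)$ with $i \le k_1 < j$ must have $(i,j) = (k_1, k_1+1) = \alpha_{k_1}$ or $(i,j) = (k_1, k_2+1)$.

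\textbf{Key steps.} The core observation is that $\Pi(w, A)$ can contain \emph{at most one} quantum edge, and I would argue this by cases on which of the two admissible quantum labels occurs. The root $(k_1, k_1+1)$ is the very last root of $\Gamma(k_1)$ (it is $(k,k+1)$ with $k=k_1$ in the last row of~\eqref{omegakchain}), so if it is the label of a quantum edge, that edge is the last edge of $\Pi(w, A)$ and nothing follows it; this is case~(1). If instead $(k_1, k_2+1)$ labels a quantum edge $w_{t-1} \xrightarrow{(k_1, k_2+1)} w_t$ (this root sits in the $i = k_1$ block of $\Gamma(k_1)$, since $k_2+1 > k_1$, so it precedes $(k_1, k_1+1)$ but comes after all roots $(k_1, j)$ with $j > k_2+1$), then I invoke Lemma~\ref{lem:quantum_edge_2-step_2} applied to $v := w_t$: the only root $(k_1, j)$ with $k_1+1 \le j \le k_2$ that can label an edge out of $v$ is $j = k_1+1$, and that edge is a Bruhat cover. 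Combined with the fact that after the $(k_1, k_2+1)$-step the remaining roots available in $\Gamma(k_1)$ are exactly those $(k_1, j)$ with $k_1+1 \le j \le k_2$ (because $(k_1, k_2+1)$ is already used and roots with first coordinate $> k_1$ have all been passed — recall the blocks are ordered by increasing $i$), I conclude that $A$ can contain at most one further root after $j_t$, namely $(k_1, k_1+1)$, and the corresponding edge is Bruhat. That gives case~(3) if this extra root is present, and case~(2) if it is not. In all three cases every edge before the quantum one is a Bruhat edge, simply because there is only one quantum edge.

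\textbf{Main obstacle.} There is no serious obstacle; the only thing requiring care is the combinatorial verification that, once $(k_1, k_2+1)$ has been used as the label of an edge at step $t$, the \emph{only} roots of $\Gamma(k_1)$ lying strictly after position $j_t$ in the chain that can still be selected into $A$ are $(k_1, j)$ with $k_1+1 \le j \le k_2$, so that Lemma~\ref{lem:quantum_edge_2-step_2} genuinely controls the tail of $\Pi(w, A)$. This follows from the explicit layout of $\Gamma(k_1)$ in~\eqref{omegakchain} together with the observation (from Lemma~\ref{lem:quantum_edge_2-step}~(1) and the proof of Lemma~\ref{lem:quantum_edge_2-step_2}) that after the quantum $(k_1, k_2+1)$-step we have $v(k_1) < v(k_1+1) < \cdots < v(k_2) < v(k_2+1)$, which forces any later selected root $(k_1, j)$ with $j \ge k_2+1$ to fail the quantum Bruhat graph criterion and hence rules out the roots $(k_1, j)$, $j = k_2+1, \ldots, n$, from appearing after position $j_t$. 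I would write this out in one or two sentences and then simply assemble the three cases.
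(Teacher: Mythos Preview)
Your proposal is correct and matches the paper's approach exactly: the paper simply declares this lemma an immediate corollary of Lemmas~\ref{lem:quantum_edge_2-step} and~\ref{lem:quantum_edge_2-step_2}, and you have spelled out precisely that deduction (Bruhat steps preserve membership in $W^J$, so at the first quantum step Lemma~\ref{lem:quantum_edge_2-step}(2) applies and forces the label to be $(k_1,k_1+1)$ or $(k_1,k_2+1)$; the chain order in~\eqref{omegakchain} places $(k_1,k_1+1)$ last and leaves only $(k_1,j)$, $k_1+1\le j\le k_2$, after $(k_1,k_2+1)$, whence Lemma~\ref{lem:quantum_edge_2-step_2} finishes).

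Two small cleanups: your parenthetical ``roots with first coordinate $> k_1$ have all been passed'' should read ``$< k_1$'' (there are no roots with first coordinate exceeding $k_1$ in $\Gamma(k_1)$), and the last sentence of your ``Main obstacle'' paragraph is unnecessary --- the roots $(k_1,j)$ with $j\ge k_2+1$ lie \emph{before} $(k_1,k_2+1)$ in the chain by~\eqref{omegakchain}, so the chain ordering alone already excludes them from the tail, without any appeal to the quantum Bruhat graph criterion.
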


In view of this lemma, we divide the set $\A(w, \Gamma(k_{1}))$ into the disjoint union of the following four subsets: 
\begin{enumerate}
\item $\A_{\lessdot}(w, \Gamma(k_{1}))$ (defined in Section~\ref{sec:qbg}); 
\item $\A_{1}(w, \Gamma(k_{1})) := \{ A \in \A(w, \Gamma(k_{1})) \mid \text{$\Pi(w, A)$ is of the form (1) in Lemma~\ref{lem:containing_quantum_steps}} \}$; 
\item $\A_{2}(w, \Gamma(k_{1})) := \{ A \in \A(w, \Gamma(k_{1})) \mid \text{$\Pi(w, A)$ is of the form (2) in Lemma~\ref{lem:containing_quantum_steps}} \}$; 
\item $\A_{3}(w, \Gamma(k_{1})) := \{ A \in \A(w, \Gamma(k_{1})) \mid \text{$\Pi(w, A)$ is of the form (3) in Lemma~\ref{lem:containing_quantum_steps}} \}$. 
\end{enumerate}
Then it follows that 
\begin{equation}
\A(w, \Gamma(k_{1})) = \A_{\lessdot}(w, \Gamma(k_{1})) \sqcup \A_{1}(w, \Gamma(k_{1})) \sqcup \A_{2}(w, \Gamma(k_{1})) \sqcup \A_{3}(w, \Gamma(k_{1})). 
\end{equation}
Also, we can verify the following: 
\begin{itemize}
\item if $A \in \A_{\lessdot}(w, \Gamma(k_{1}))$, then $\dn(w, A) = 0$, and hence $Q^{[\dn(w, A)]^{J}} = 0$; 
\item if $A \in \A_{1}(w, \Gamma(k_{1}))$, then $\dn(w, A) = \alpha_{k_{1}}^{\vee}$, and hence $Q^{[\dn(w, A)]^{J}} = Q_{k_{1}}$; 
\item if $A \in \A_{2}(w, \Gamma(k_{1}))$ or $A \in \A_{3}(w, \Gamma(k_{1}))$, then $\dn(w, A) = \alpha_{k_{1}}^{\vee} + \cdots + \alpha_{k_{2}}^{\vee}$, and hence $Q^{[\dn(w, A)]^{J}} = Q^{\alpha_{k_{1}}^{\vee} + \alpha_{k_{2}}^{\vee}} = Q_{k_{1}}Q_{k_{2}}$. 
\end{itemize}
Therefore, by equation~\eqref{qkchev-f}, we deduce that 
\begin{align}
[\cO(-\varpi_{k_{1}})] \cdot [\cO^{w}] &= \be^{w\varpi_{k_{1}}} \sum_{A \in \A_{\lessdot}(w, \Gamma(k_{1}))} (-1)^{|A|} [\cO^{\wend(w, A)}] \\ 
& \quad + \be^{w\varpi_{k_{1}}} \sum_{A \in \A_{1}(w, \Gamma(k_{1}))} (-1)^{|A|} Q_{k_{1}} [\cO^{\mcr{\wend(w, A)}}] \\ 
& \quad + \be^{w\varpi_{k_{1}}} \sum_{A \in \A_{2}(w, \Gamma(k_{1}))} (-1)^{|A|} Q_{k_{1}} Q_{k_{2}} [\cO^{\mcr{\wend(w, A)}}] \\ 
& \quad + \be^{w\varpi_{k_{1}}} \sum_{A \in \A_{3}(w, \Gamma(k_{1}))} (-1)^{|A|} Q_{k_{1}} Q_{k_{2}} [\cO^{\mcr{\wend(w, A)}}]\,. 
\end{align}

Next, we consider the case $k = k_{2}$. In this case, we use $\Gamma^{\ast}(k_{2})$ instead of $\Gamma(k_{2})$. 
From Lemma~\ref{lem:containing_quantum_steps}, by applying the diagram automorphism $\omega$, we obtain the following. 

\begin{lem} \label{lem:containing_quantum_steps_2}
Let $w \in W^{J}$, and take $A = \{j_{1} < \cdots < j_{s}\} \in \A(w, \Gamma^{\ast}(k_{2}))$. If the directed path $\Pi(w, A)$ contains a quantum edge, then $\Pi(w, A)$ is one of the following forms; here, $\xrightarrow[\sB]{}$ indicates a Bruhat edge, while $\xrightarrow[\sQ]{}$ indicates a quantum edge: 
\begin{enumerate}
\item $\Pi(w, A): w = w_{0} \xrightarrow[\sB]{} \cdots \xrightarrow[\sB]{} w_{s-1} \xrightarrow[\sQ]{(k_{2}, k_{2}+1)} w_{s}$; 
\item $\Pi(w, A): w = w_{0} \xrightarrow[\sB]{} \cdots \xrightarrow[\sB]{} w_{s-1} \xrightarrow[\sQ]{(k_{1}, k_{2}+1)} w_{s}$; 
\item $\Pi(w, A): w = w_{0} \xrightarrow[\sB]{} \cdots \xrightarrow[\sB]{} w_{s-2} \xrightarrow[\sQ]{(k_{1}, k_{2}+1)} w_{s-1} \xrightarrow[\sB]{(k_{2}, k_{2}+1)} w_{s}$. 
\end{enumerate}
\end{lem}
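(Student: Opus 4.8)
The plan is to obtain Lemma~\ref{lem:containing_quantum_steps_2} from Lemma~\ref{lem:containing_quantum_steps} purely formally, by transporting the latter along the Dynkin diagram automorphism $\omega \colon l \mapsto n-l$ of type $A_{n-1}$. The key structural fact I would invoke is the one recorded in Section~\ref{sec:lamch}: the chain $\Gamma^{\ast}(k_2)$ is, by definition, obtained by applying $\omega$ to $\Gamma(n-k_2)$. So if I set $k_1' := n - k_2$ and $k_2' := n - k_1$ (note $k_1 < k_2$ forces $k_1' < k_2'$, and the two-step subset $J = I \setminus \{k_1, k_2\}$ is $\omega$-stable up to the relabeling $\{k_1', k_2'\}$), then $\Gamma^{\ast}(k_2)$ plays for the pair $(k_1', k_2')$ exactly the role that $\Gamma(k_1')$ plays in Lemma~\ref{lem:containing_quantum_steps}, with $k_1'$ in the distinguished first slot.

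The main step is to check that $\omega$ induces a graph automorphism of $\QB(W) = \QB(S_n)$ that is compatible with the combinatorial data attached to admissible subsets. Concretely: $\omega$ acts on $S_n$ by $w \mapsto w_0 w w_0$ (conjugation by the longest element, using $w_0(i) = n+1-i$), it sends simple reflections $s_l$ to $s_{n-l}$, it preserves length, and — because a Bruhat edge stays a Bruhat edge and a quantum edge stays a quantum edge under this relabeling, as $\langle \rho, \beta^\vee\rangle$ is $\omega$-invariant — it carries directed paths in $\QB(W)$ to directed paths of the same type (Bruhat/quantum pattern preserved). It also matches the $\lambda$-chains: $\omega(\Gamma(n-k_2)) = \Gamma^{\ast}(k_2)$ term by term, hence it gives a bijection $\A(\omega(w), \Gamma(n-k_2)) \to \A(w, \Gamma^{\ast}(k_2))$ (or the reverse, depending on orientation conventions) that preserves $|A|$, the Bruhat/quantum status of each step, and the position of each step in the chain. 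Under this correspondence, the three forms (1)--(3) in Lemma~\ref{lem:containing_quantum_steps}, stated for $(k_1', k_1'+1) = (n-k_2, n-k_2+1)$ and $(k_1', k_2'+1) = (n-k_2, n-k_1+1)$, get relabeled by $l \mapsto n-l$ (with the caveat that a transposition $(i,j)$ with $i<j$ maps to $(n-j, n-i)$, so the pair of indices gets swapped and reordered). Tracing this through: $(k_1', k_1'+1) \mapsto (n-(n-k_2+1), n-(n-k_2)) = (k_2 - 1, k_2)$... here I would need to be slightly careful, since the distinguished transpositions in Lemma~\ref{lem:containing_quantum_steps_2} are $(k_2, k_2+1)$ and $(k_1, k_2+1)$, so the correct bookkeeping is that the roots $\alpha_{k_1'} = (n-k_2, n-k_2+1)$ and $(k_1', k_2'+1) = (n-k_2, n-k_1+1)$ pull back under $\omega$ (written as $\beta \mapsto -w_0\beta$ on roots) to $\alpha_{k_2} = (k_2, k_2+1)$ and $(k_1, k_2+1)$ respectively.

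The hard part — really the only non-bookkeeping point — is pinning down the direction of the induced bijection on admissible subsets and confirming that it sends the distinguished reflections to the right ones, because $\omega$ reverses the natural left-to-right order within a transposition $(i,j)$, and one must verify this does not disturb the property that $(k_1', k_1'+1)$, resp.\ $(k_1', k_2'+1)$, occurs last, resp.\ second-to-last with a Bruhat cover after it, in the chain $\Gamma(n-k_2)$. Once that is checked against the explicit display~\eqref{omegakchain} (for $\Gamma(n-k_2)$) versus~\eqref{omegakchain*} (for $\Gamma^{\ast}(k_2)$), the three cases transport verbatim. I would therefore structure the proof as: (i) recall $\Gamma^{\ast}(k_2) = \omega(\Gamma(n-k_2))$ and that $\omega$ is an automorphism of $\QB(W)$ preserving edge types and lengths; (ii) deduce the induced type-preserving bijection between admissible-subset collections, matching positions in the respective chains; (iii) apply Lemma~\ref{lem:containing_quantum_steps} for the pair $(n-k_2, n-k_1)$ and relabel indices via $\omega$ to read off forms (1)--(3) in the present statement. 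No genuinely new combinatorics is needed beyond what was done for $k = k_1$.
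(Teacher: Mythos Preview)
Your approach is exactly the paper's: the paper states only that Lemma~\ref{lem:containing_quantum_steps_2} follows ``from Lemma~\ref{lem:containing_quantum_steps}, by applying the diagram automorphism $\omega$,'' and you have correctly unpacked the bookkeeping (including the relabeling $(i,j)\mapsto(n{+}1{-}j,\,n{+}1{-}i)$ on positive roots, which sends $(k_1',k_1'{+}1)$ and $(k_1',k_2'{+}1)$ to $(k_2,k_2{+}1)$ and $(k_1,k_2{+}1)$). The momentary miscount $(k_2{-}1,k_2)$ you flagged yourself is indeed wrong and your corrected version is the right one; otherwise there is nothing to add.
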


In view of this lemma, we divide the set $\A(w, \Gamma^{\ast}(k_{2}))$ into the disjoint union of the following four subsets: 
\begin{enumerate}
\item $\A_{\lessdot}(w, \Gamma^{\ast}(k_{2}))$ (already defined); 
\item $\A_{1}(w, \Gamma^{\ast}(k_{2})) := \{ A \in \A(w, \Gamma^{\ast}(k_{2})) \mid \text{$\Pi(w, A)$ is of the form (1) in Lemma~\ref{lem:containing_quantum_steps_2}} \}$; 
\item $\A_{2}(w, \Gamma^{\ast}(k_{2})) := \{ A \in \A(w, \Gamma^{\ast}(k_{2})) \mid \text{$\Pi(w, A)$ is of the form (2) in Lemma~\ref{lem:containing_quantum_steps_2}} \}$; 
\item $\A_{3}(w, \Gamma^{\ast}(k_{2})) := \{ A \in \A(w, \Gamma^{\ast}(k_{2})) \mid \text{$\Pi(w, A)$ is of the form (3) in Lemma~\ref{lem:containing_quantum_steps_2}} \}$. 
\end{enumerate}

\subsection{Parabolic Chevalley formulas for two-step flag manifolds} \label{sec:two-step_positivity}

We state cancellation-free parabolic Chevalley formulas for the equivariant quantum $K$-theory of the two-step flag manifold $G/P_{J}$; 
the proofs of these results will be given in Sections~\ref{subsec:proof1} and \ref{subsec:proof2}. 
First, we assume that $k = k_{1}$. Take and fix $w \in W^{J}$. 

\begin{thm} \label{thm:2-step_1}
If $w(k_{1}) < w(k_{1}+1)$, then we have the following cancellation-free formula: 
\begin{equation}
[\cO(-\varpi_{k_{1}})] \cdot [\cO^{w}] = \be^{w\varpi_{k_{1}}} \sum_{A \in \A_{\lessdot}(w, \Gamma(k_{1}))} (-1)^{|A|} [\cO^{\wend(w, A)}]\,. 
\end{equation}
\end{thm}

We consider the following condition: 
\begin{itemize}
\item[(Q)] $w(k_{1}) > w(k_{2})$ and $w(k_{1}+1) > w(k_{2}+1)$. 
\end{itemize}

\begin{rema} \label{rem:k1>k1+1}
As mentioned at the beginning of Section~\ref{subsec:lemmas}, $w[k_{1}+1, k_{2}]$ is an increasing sequence for $w \in W^{J}$. 
Hence condition (Q) implies that $w (k_{1}) > w(k_{2}) \geq w(k_{1}+1) > w(k_{2} + 1)$. 
\end{rema}

\begin{thm} \label{thm:2-step_2}
Assume that $w(k_{1}) > w(k_{1}+1)$, and assume that condition \textup{(Q)} does not hold. 
\begin{enumerate}
\item If $w(1) < w(k_{1}+1)$ or $w(k_{1}) < w(k_{2})$, then we have the following cancellation-free formula: 
\begin{equation}
[\cO(-\varpi_{k_{1}})] \cdot [\cO^{w}] = \be^{w\varpi_{k_{1}}} \sum_{A \in \A_{\lessdot}(w, \Gamma(k_{1}))} (-1)^{|A|} [\cO^{\wend(w, A)}]\,. 
\end{equation}

\item If $w(1) > w(k_{1}+1)$ and $w(k_{1}) > w(k_{2})$, then we have the following cancellation-free formula: 
\begin{equation}
[\cO(-\varpi_{k_{1}})] \cdot [\cO^{w}] = \be^{w\varpi_{k_{1}}} \sum_{A \in \A_{\lessdot}(w, \Gamma(k_{1}))} (-1)^{|A|} \left( [\cO^{\wend(w, A)}] - Q_{k_{1}} [\cO^{\mcr{\wend(w, A)s_{k_{1}}}}] \right)\,. 
\end{equation}
\end{enumerate}
\end{thm}

Also, we consider the following condition: 
\begin{itemize}
\item[(Full)] both of the following hold: 
\begin{enumerate}
\item $w(k_{1}) = n$ and $w(k_{2}+1) = 1$; and 
\item $w(k_{1}+1)$ is the minimum element in the sequence $w[1, k_{2}]$. 
\end{enumerate}
\end{itemize}

\begin{rema}
Condition (Full) holds if and only if condition (Q) holds and $w(1) > w(k_{1}+1)$, $w(k_{1}) > w(n)$; 
note that the inequality $w(1) > w(k_{1}+1)$, together with condition (Q), implies that $w(1) > w(k_{2}+1)$. 
\end{rema}

\begin{thm} \label{thm:2-step_3}
Assume condition \textup{(Q)}. 
\begin{enumerate}
\item Assume that $w(k_{1}) < w(n)$. 
\begin{enumerate}
\item If $w(1) < w(k_{1}+1)$, then we have the following cancellation-free formula: 
\begin{equation}
[\cO(-\varpi_{k_{1}})] \cdot [\cO^{w}] = \be^{w\varpi_{k_{1}}} \sum_{A \in \A_{\lessdot}(w, \Gamma(k_{1}))} (-1)^{|A|} [\cO^{\wend(w, A)}]\,. 
\end{equation}

\item If $w(1) > w(k_{1}+1)$, then we have the following cancellation-free formula: 
\begin{equation}
[\cO(-\varpi_{k_{1}})] \cdot [\cO^{w}] = \be^{w\varpi_{k_{1}}} \sum_{A \in \A_{\lessdot}(w, \Gamma(k_{1}))} (-1)^{|A|} \left( [\cO^{\wend(w, A)}] - Q_{k_{1}} [\cO^{\mcr{\wend(w, A)s_{k_{1}}}}] \right)\,. 
\end{equation}
\end{enumerate}

\item Assume that $w(k_{1}) > w(n)$. 
\begin{enumerate}
\item If $w(1) < w(k_{2}+1)$, then we have the following cancellation-free formula: 
\begin{equation}
[\cO(-\varpi_{k_{1}})] \cdot [\cO^{w}] = \be^{w\varpi_{k_{1}}} \sum_{A \in \A_{\lessdot}(w, \Gamma(k_{1}))} (-1)^{|A|} [\cO^{\wend(w, A)}]\,. 
\end{equation}

\item If $w(k_{2}+1) < w(1) < w(k_{1}+1)$, then we have the following cancellation-free formula: 
\begin{equation}
[\cO(-\varpi_{k_{1}})] \cdot [\cO^{w}] = \be^{w\varpi_{k_{1}}} \sum_{A \in \A_{\lessdot}(w, \Gamma(k_{1}))} (-1)^{|A|} \left( [\cO^{\wend(w, A)}] - Q_{k_{1}}Q_{k_{2}} [\cO^{\mcr{\wend(w, A)(k_{1}, k_{2}+1)}}] \right)\,. 
\end{equation}
\end{enumerate}

\item If condition \textup{(Full)} holds, then we have the following cancellation-free formula: 
\begin{equation}
\begin{split}
[\cO(-\varpi_{k_{1}})] \cdot [\cO^{w}] &= \be^{w\varpi_{k_{1}}} \sum_{A \in \A_{\lessdot}(w, \Gamma(k_{1}))} (-1)^{|A|} \left( [\cO^{\wend(w, A)}] - Q_{k_{1}} [\cO^{\mcr{\wend(w, A)s_{k_{1}}}}] \right. \\ 
& \hspace*{20mm} - \left. Q_{k_{1}}Q_{k_{2}} \left( [\cO^{\mcr{\wend(w, A)(k_{1}, k_{2}+1)}}] - [\cO^{\mcr{\wend(w, A)(k_{1}, k_{2}+1)s_{k_{1}}}}] \right) \right)\,. 
\end{split}
\end{equation}
\end{enumerate}
\end{thm}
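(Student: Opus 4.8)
The plan is to start from the four-term expansion of $[\cO(-\varpi_{k_1})]\cdot[\cO^w]$ established in Section~\ref{subsec:lemmas} by pushing the Chevalley formula of Theorem~\ref{qkchev} through the surjection $\Phi_J$ and splitting $\A(w,\Gamma(k_1))$ according to Lemma~\ref{lem:containing_quantum_steps}: the Novikov-degree-$0$ part is the sum over $\A_\lessdot(w,\Gamma(k_1))$, the $Q_{k_1}$-part the sum over $\A_1(w,\Gamma(k_1))$, and the $Q_{k_1}Q_{k_2}$-part the sum over $\A_2(w,\Gamma(k_1))\sqcup\A_3(w,\Gamma(k_1))$. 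Since cancellations occur only within a fixed Novikov degree, the three parts are handled in turn. The degree-$0$ part needs no work: every step of a path $\Pi(w,A)$ with $A\in\A_\lessdot(w,\Gamma(k_1))$ is a Bruhat cover along a root straddling $k_1$, hence stays in $W^J$ by Lemma~\ref{lem:quantum_edge_2-step}(1), so $\mcr{\,\cdot\,}$ is trivial on it and cancellation-freeness follows from Remark~\ref{rem:Chevalley_cancellation-free}; this gives the summand $\be^{w\varpi_{k_1}}\sum_{A\in\A_\lessdot(w,\Gamma(k_1))}(-1)^{|A|}[\cO^{\wend(w,A)}]$ common to all five formulas. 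Each involution below is tacitly $\wt$-preserving, because every affine hyperplane attached to a root of $\Gamma(k_1)$ passes through $\varpi_{k_1}$ (cf.\ \cite[Corollary~8.2]{lapawg}), and is sign-reversing because it changes $|A|$ by one.

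For the $Q_{k_1}$-part: condition (Q) forces $w(k_1)>w(k_1+1)$ (Remark~\ref{rem:k1>k1+1}), and since position $k_1$ can only gain, and position $k_1+1$ can only lose, value along a Bruhat chain built from $\Gamma(k_1)$, every $A\in\A_\lessdot(w,\Gamma(k_1))$ has $\wend(w,A)(k_1)>\wend(w,A)(k_1+1)$, hence admits the quantum edge $\alpha_{k_1}$. Thus $A\mapsto A\cup\{\alpha_{k_1}\}$ is a parity-reversing bijection $\A_\lessdot(w,\Gamma(k_1))\to\A_1(w,\Gamma(k_1))$, and the $Q_{k_1}$-part equals $-Q_{k_1}\be^{w\varpi_{k_1}}\sum_{A\in\A_\lessdot(w,\Gamma(k_1))}(-1)^{|A|}[\cO^{\mcr{\wend(w,A)s_{k_1}}}]$. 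The key point is that, for $v\in W^J$ with $v(k_1)>v(k_1+1)$, the coset $\mcr{vs_{k_1}}$ depends only on the block value sets of $v$ after interchanging $\max v[1,k_1]$ with $\min v[k_1+1,k_2]$; consequently, toggling a Bruhat cover of column type $(\,\cdot\,,k_1+1)$ along $\Pi(w,A)$ -- which only permutes values inside column $k_1+1$ and the first block -- leaves $\mcr{\wend(w,A)s_{k_1}}$ and $\wt(w,A)$ unchanged. Exactly as in Case~2 of the proof of Theorem~\ref{qkchev-a}, this defines a sign-reversing involution on $\A_\lessdot(w,\Gamma(k_1))$ which is total when $w(1)<w(k_1+1)$ (so the $Q_{k_1}$-part vanishes: cases (1)(a), (2)(a), (2)(b)) and has no valid move when $w(1)>w(k_1+1)$ (so the $Q_{k_1}$-part is already cancellation-free: cases (1)(b) and (3), noting condition (Full) implies $w(1)>w(k_1+1)$).

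For the $Q_{k_1}Q_{k_2}$-part: by Lemma~\ref{lem:quantum_edge_2-step_2}, after the quantum edge $(k_1,k_2+1)$ the Bruhat cover $\alpha_{k_1}$ is always available, so $A\mapsto A\cup\{\alpha_{k_1}\}$ is a parity-reversing bijection $\A_2(w,\Gamma(k_1))\to\A_3(w,\Gamma(k_1))$, and the $Q_{k_1}Q_{k_2}$-part equals $Q_{k_1}Q_{k_2}\be^{w\varpi_{k_1}}\sum_{A\in\A_2(w,\Gamma(k_1))}(-1)^{|A|}\bigl([\cO^{\mcr{\wend(w,A)}}]-[\cO^{\mcr{\wend(w,A)s_{k_1}}}]\bigr)$, where $\A_2(w,\Gamma(k_1))$ consists of $A'\cup\{(k_1,k_2+1)\}$ for those $A'\in\A_\lessdot(w,\Gamma(k_1))$, using only roots preceding $(k_1,k_2+1)$ in $\Gamma(k_1)$, for which $\wend(w,A')$ admits the quantum edge $(k_1,k_2+1)$. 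Now distinguish: if $w(k_1)\neq n$ -- equivalently, under (Q), $w(n)=n$, i.e.\ $w(k_1)<w(n)$ (part (1)) -- then row-$k_1$ Bruhat covers $(k_1,\,\cdot\,)$ preceding $(k_1,k_2+1)$ are always available, and toggling such a cover (which, by the same block-bookkeeping, preserves $\mcr{\wend(w,A)}$, $\mcr{\wend(w,A)s_{k_1}}$, and $\wt(w,A)$) gives a total involution, so the entire $Q_{k_1}Q_{k_2}$-part vanishes; this is why part (1) has no $Q_{k_1}Q_{k_2}$-term. If $w(k_1)=n$, there are no row-$k_1$ covers, $\wend(w,A)(k_1)=n$ for all $A\in\A_\lessdot(w,\Gamma(k_1))$, and the two summands are treated separately: the $[\cO^{\mcr{\wend(w,A)}}]$-part is killed by a column-$(k_2+1)$ toggling involution precisely when $w(1)<w(k_2+1)$ (case (2)(a)) and is otherwise cancellation-free (cases (2)(b), (3)), while the $[\cO^{\mcr{\wend(w,A)s_{k_1}}}]$-part is killed by the column-$(k_1+1)$ involution of the previous paragraph precisely when $w(1)<w(k_1+1)$ (cases (2)(a), (2)(b)) and is otherwise cancellation-free (case (3)). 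A routine computation with Deodhar's Bruhat-order criteria (as in the Remarks after Theorems~\ref{qkchev-a} and~\ref{qkchev-c}) identifies the surviving indices with $\mcr{\wend(w,A)(k_1,k_2+1)}$ and $\mcr{\wend(w,A)(k_1,k_2+1)s_{k_1}}$.

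Collecting the survivors in each of the five mutually exclusive regimes -- cut out by ``$w(k_1)\neq n$ vs.\ $w(k_1)=n$'', ``$w(1)\lessgtr w(k_2+1)$'', ``$w(1)\lessgtr w(k_1+1)$'' -- reproduces exactly formulas (1)(a), (1)(b), (2)(a), (2)(b), and (3); in particular, in case (3) (condition (Full)) none of the three involutions has a move, so every term of all Novikov degrees survives. The unavoidable bookkeeping is twofold: (i) checking that each of the three involutions (column-$(k_1+1)$, column-$(k_2+1)$, row-$k_1$) is simultaneously parity-reversing, $\wt$-preserving, and $\mcr{\wend}$-preserving on the relevant term, \emph{and} that it does not disturb the term it is not meant to cancel; and (ii) translating each case hypothesis on $w$ into the precise statement that a toggleable cover of the appropriate type does or does not exist. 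This translation -- carried out via the quantum Bruhat graph criteria of Section~\ref{sec:qbg} and Deodhar's criterion -- together with the non-interference check, is the main obstacle; everything else runs parallel to the proofs of Theorems~\ref{qkchev-a} and~\ref{qkchev-c}.
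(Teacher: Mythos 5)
Your overall architecture is the same as the paper's: push Theorem~\ref{qkchev} through $\Phi_J$, split $\A(w,\Gamma(k_1))$ into $\A_\lessdot\sqcup\A_1\sqcup\A_2\sqcup\A_3$ by Novikov degree, cancel by toggling a single root, and rewrite the survivors via the bijections $A\mapsto A\sqcup\{(k_1,k_1+1)\}$ and $A\mapsto A\sqcup\{(k_1,k_2+1)\}$. However, there is a genuine gap in your treatment of case (2)(a). You claim that when $w(1)<w(k_2+1)$ the $[\cO^{\mcr{\wend(w,A)}}]$-part is killed by a column-$(k_2+1)$ toggle and the $[\cO^{\mcr{\wend(w,A)s_{k_1}}}]$-part (i.e.\ the $\A_3$-part) by a column-$(k_1+1)$ toggle, acting separately. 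This breaks down exactly when no first-block value lies strictly between $w(k_2+1)$ and $w(k_1+1)$ (the paper's auxiliary condition (Q-A) failing; e.g.\ $w=15423$ with $k_1=2$, $k_2=3$): then the maximal $p\le k_1$ with $w(p)<w(k_2+1)$ coincides with the one for $w(k_1+1)$, and an $A\in\A_2$ containing $(p,k_1+1)$ must also contain $(p,k_2+1)$ — removing $(p,k_2+1)$ destroys the quantum step $(k_1,k_2+1)$, so your toggle is not even well defined on its domain, and dually for $\A_3$. The paper resolves this with a genuinely different pairing: the leftover sets $\A_2'$ and $\A_3'$ are cross-cancelled by the involution exchanging $\{(p,k_2+1),(p,k_1+1)\}$ with $\{(k_1,k_1+1)\}$ (Lemmas~\ref{lem:2-step_cancellation_3-2-1-A2'C}--\ref{lem:2-step_cancellation_3-2-1-A23'}). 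This mixed $\A_2$/$\A_3$ cancellation is the hardest step of the proof and is absent from your argument; your closing remark about "non-interference" does not capture it, since the issue is well-definedness, not interference.

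A second, smaller gap: in cases (1)(b), (2)(b), and (3) you infer that a sum is cancellation-free merely because your toggle "has no valid move". Absence of one cancellation scheme does not prove cancellation-freeness. The paper proves it by showing that for every relevant $A$ one has $\mcr{\wend(w,A)}=\wend(w,A)\sigma_1\sigma_2$ for cyclic permutations $\sigma_1,\sigma_2$ that do not depend on $A$ (using the case hypotheses, e.g.\ $w(k_1)>w(n)$ to control the third block), so distinct admissible subsets yield distinct parabolic Schubert classes, and then invokes the sign-coherence of Remark~\ref{rem:Chevalley_cancellation-free}. You would need to supply this injectivity argument (and, in case (3), verify it simultaneously on $\A_2\sqcup\A_3$ as in Lemma~\ref{lem:2-step_cancellation-free_3-3-A23}) for your proof to be complete.
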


Next, we assume that $k = k_{2}$. 

\begin{thm} \label{thm:2-step_4}
If $w(k_{2}) < w(k_{2}+1)$, then we have the following cancellation-free formula: 
\begin{equation}
[\cO(-\varpi_{k_{2}})] \cdot [\cO^{w}] = \be^{w\varpi_{k_{2}}} \sum_{A \in \A_{\lessdot}(w, \Gamma^{\ast}(k_{2}))} (-1)^{|A|} [\cO^{\wend(w, A)}]\,. 
\end{equation}
\end{thm}

Recall condition (Q) above. 

\begin{thm} \label{thm:2-step_5}
Assume that $w(k_{2}) > w(k_{2}+1)$, and assume that condition \textup{(Q)} does not hold. 
\begin{enumerate}
\item If $w(k_{2}) < w(n)$ or $w(k_{1}+1) < w(k_{2}+1)$, then we have the following cancellation-free formula: 
\begin{equation}
[\cO(-\varpi_{k_{2}})] \cdot [\cO^{w}] = \be^{w\varpi_{k_{2}}} \sum_{A \in \A_{\lessdot}(w, \Gamma^{\ast}(k_{2}))} (-1)^{|A|} [\cO^{\wend(w, A)}]\,. 
\end{equation}

\item If $w(k_{2}) > w(n)$ and $w(k_{1}+1) > w(k_{2}+1)$, then we have the following cancellation-free formula: 
\begin{equation}
[\cO(-\varpi_{k_{2}})] \cdot [\cO^{w}] = \be^{w\varpi_{k_{2}}} \sum_{A \in \A_{\lessdot}(w, \Gamma^{\ast}(k_{2}))} (-1)^{|A|} \left( [\cO^{\wend(w, A)}] - Q_{k_{2}} [\cO^{\mcr{\wend(w, A)s_{k_{2}}}}] \right)\,. 
\end{equation}
\end{enumerate}
\end{thm}

We consider the following analog of condition (Full): 
\begin{itemize}
\item[(Full)$^{\ast}$] both of the following hold: 
\begin{enumerate}
\item $w(k_{1}) = n$ and $w(k_{2}+1) = 1$; and 
\item $w(k_{2})$ is the maximum element in the sequence $w[k_{1}+1, n]$. 
\end{enumerate}
\end{itemize}

\begin{rema}
Condition (Full)$^{\ast}$ holds if and only if condition (Q) holds and $w(n) < w(k_{2})$, $w(k_{2}+1) < w(1)$; 
note that the inequality $w(n) < w(k_{2})$, together with condition (Q), implies that $w(n) < w(k_{1})$. 
\end{rema}

\begin{thm} \label{thm:2-step_6}
Assume condition \textup{(Q)}. 
\begin{enumerate}
\item Assume that $w(1) < w(k_{2}+1)$. 
\begin{enumerate}
\item If $w(k_{2}) < w(n)$, then we have the following cancellation-free formula: 
\begin{equation}
[\cO(-\varpi_{k_{2}})] \cdot [\cO^{w}] = \be^{w\varpi_{k_{2}}} \sum_{A \in \A_{\lessdot}(w, \Gamma^{\ast}(k_{2}))} (-1)^{|A|} [\cO^{\wend(w, A)}]\,. 
\end{equation}

\item If $w(k_{2}) > w(n)$, then we have the following cancellation-free formula: 
\begin{equation}
[\cO(-\varpi_{k_{2}})] \cdot [\cO^{w}] = \be^{w\varpi_{k_{2}}} \sum_{A \in \A_{\lessdot}(w, \Gamma^{\ast}(k_{2}))} (-1)^{|A|} \left( [\cO^{\wend(w, A)}] - Q_{k_{2}} [\cO^{\mcr{\wend(w, A)s_{k_{2}}}}] \right)\,. 
\end{equation}
\end{enumerate}

\item Assume that $w(1) > w(k_{2}+1)$. 
\begin{enumerate}
\item If $w(k_{1}) < w(n)$, then we have the following cancellation-free formula: 
\begin{equation}
[\cO(-\varpi_{k_{2}})] \cdot [\cO^{w}] = \be^{w\varpi_{k_{2}}} \sum_{A \in \A_{\lessdot}(w, \Gamma^{\ast}(k_{2}))} (-1)^{|A|} [\cO^{\wend(w, A)}]\,. 
\end{equation}

\item If $w(k_{2}) < w(n) < w(k_{1})$, then we have the following cancellation-free formula: 
\begin{equation}
[\cO(-\varpi_{k_{2}})] \cdot [\cO^{w}] = \be^{w\varpi_{k_{2}}} \sum_{A \in \A_{\lessdot}(w, \Gamma^{\ast}(k_{2}))} (-1)^{|A|} \left( [\cO^{\wend(w, A)}] - Q_{k_{1}}Q_{k_{2}} [\cO^{\mcr{\wend(w, A)(k_{1}, k_{2}+1)}}] \right)\,. 
\end{equation}
\end{enumerate}

\item If condition \textup{(Full)$^{\ast}$} holds, then we have the following cancellation-free formula: 
\begin{equation}
\begin{split}
[\cO(-\varpi_{k_{2}})] \cdot [\cO^{w}] &= \be^{w\varpi_{k_{2}}} \sum_{A \in \A_{\lessdot}(w, \Gamma^{\ast}(k_{2}))} (-1)^{|A|} \left( [\cO^{\wend(w, A)}] - Q_{k_{2}} [\cO^{\mcr{\wend(w, A)s_{k_{2}}}}] \right. \\ 
& \hspace*{20mm} \left. - Q_{k_{1}}Q_{k_{2}} \left( [\cO^{\mcr{\wend(w, A)(k_{1}, k_{2}+1)}}] - [\cO^{\mcr{\wend(w, A)(k_{1}, k_{2}+1)s_{k_{2}}}}] \right) \right)\,. 
\end{split}
\end{equation}

\end{enumerate}
\end{thm}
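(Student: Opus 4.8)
The plan is to follow exactly the template established in Sections~\ref{sec:max_parabolic} and \ref{subsec:lemmas}: start from the expansion of $[\cO(-\varpi_{k_2})]\cdot[\cO^w]$ obtained by applying $\Phi_J$ to the $G/B$ Chevalley formula \eqref{qkchev-f} with the chain $\Gamma^{\ast}(k_2)$, split the index set according to Lemma~\ref{lem:containing_quantum_steps_2} into $\A_\lessdot$, $\A_1$, $\A_2$, $\A_3$, and then construct an explicit sign-reversing involution that cancels all unwanted terms. By Remark~\ref{nocancel} there are no cancellations inside $\A_\lessdot(w,\Gamma^{\ast}(k_2))$, so the first sum $\be^{w\varpi_{k_2}}\sum_{A\in\A_\lessdot}(-1)^{|A|}[\cO^{\wend(w,A)}]$ always survives; the entire content of the six cases is which of the ``quantum'' contributions (the $Q_{k_2}$-term, the $Q_{k_1}Q_{k_2}$-term, and the $Q_{k_1}Q_{k_2}$-pair in the (Full)$^{\ast}$ case) survive and which cancel. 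By condition~(Q) and Remark~\ref{rem:k1>k1+1} (applied with the roles adapted to $k_2$), we are always in the regime $w(k_1)>w(k_2)\ge w(k_1+1)>w(k_2+1)$, which is what makes the quantum edges $(k_2,k_2+1)$ and $(k_1,k_2+1)$ available in the sense of Lemma~\ref{lem:containing_quantum_steps_2}.

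Concretely, I would argue as follows. For $A\in\A_1(w,\Gamma^{\ast}(k_2))$ (the path ends with a quantum $(k_2,k_2+1)$-step), I pair $A$ with a set $A'$ obtained by inserting, just before the last root, a Bruhat-cover root of the form $(i,k_2+1)$ with $i<k_2$ (when $w(1)>w(k_2+1)$, so such an $i$ with the appropriate value $w(k_2)-1$, or the relevant ``predecessor'' value, exists) or $(k_2,j)$ with $j>k_2+1$ (in the mirror situation), using the value-adjacency argument from Case~1/Case~2 of the proof of Theorem~\ref{qkchev-a} together with the type~$A$ criterion in Proposition~\ref{prop:quantum_bruhat_order_type_A}. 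The $\mcr{\cdot}$-equality $\mcr{\wend(w,A')}=\mcr{\wend(w,A)}$ follows from the local commutation $\lfloor\cdots(i,k_2+1)(k_2,k_2+1)\rfloor=\lfloor\cdots(k_2,k_2+1)(i,k_2)\rfloor$ (and its mirror), the $\dn$-values agree because both lie in $\A_1$, and $\wt$ is preserved because all affine reflections attached to roots of $\Gamma^{\ast}(k_2)$ fix $\varpi_{k_2}$ (as in \cite[Corollary~8.2]{lapawg}). When no such predecessor exists — precisely when $w(k_2)=w(n)$ in the relevant sense, equivalently the Bruhat-order condition forcing the element to be ``full'' in the last positions — the $A\in\A_1$ term survives and becomes the $-Q_{k_2}[\cO^{\mcr{\wend(w,A)s_{k_2}}}]$ contribution. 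For $A\in\A_2$ and $A\in\A_3$ (paths ending with a quantum $(k_1,k_2+1)$-step, possibly followed by a Bruhat $(k_2,k_2+1)$-step), I use Lemma~\ref{lem:quantum_edge_2-step_2} (transported by $\omega$) to see that the only Bruhat step that can follow the quantum $(k_1,k_2+1)$-step is $(k_2,k_2+1)$, and I pair $\A_2\leftrightarrow\A_3$ by adjoining/removing that last $(k_2,k_2+1)$-root; this pairing is sign-reversing and $\mcr{\cdot}$-, $\dn$-, $\wt$-preserving by the same local-commutation and weight-invariance arguments, and it cancels both $\A_2$ and $\A_3$ whenever the $(k_2,k_2+1)$ extension is available. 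The leftover of this $\A_2$--$\A_3$ cancellation is nonempty exactly when the extra Bruhat step is forced to not exist, giving the surviving $-Q_{k_1}Q_{k_2}[\cO^{\mcr{\wend(w,A)(k_1,k_2+1)}}]$ term (in cases (2)(b) and part of (3)), together with its companion $+Q_{k_1}Q_{k_2}[\cO^{\mcr{\wend(w,A)(k_1,k_2+1)s_{k_2}}}]$ coming from the $\A_3$-sets that themselves cannot be reduced (in the (Full)$^{\ast}$ case). Finally, I determine in each of the six cases, by translating the availability/non-availability of the predecessor roots into inequalities among $w(1),w(k_1),w(k_2),w(n),w(k_2+1),w(k_1+1)$ via Deodhar-type criteria (\cite[Theorem~2.6.3]{babccg}), which surviving terms remain; the (Full)$^{\ast}$ characterization in the Remark preceding the theorem is exactly the case where \emph{all three} quantum contributions survive.

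The main obstacle I expect is the bookkeeping in the $\A_2$/$\A_3$ analysis and the precise delineation of the sub-cases (1)(a)/(1)(b)/(2)(a)/(2)(b)/(3): one must check that the conditions $w(1)\lessgtr w(k_2+1)$, $w(k_2)\lessgtr w(n)$, and $w(n)\lessgtr w(k_1)$ partition condition~(Q) correctly and that, within each piece, the predecessor-root constructions above are simultaneously well-defined or simultaneously obstructed so that no stray term is left over and no term is cancelled twice. In particular, one has to verify that the set $\A_3$ of sets ending $\cdots\xrightarrow[\sQ]{(k_1,k_2+1)}\cdot\xrightarrow[\sB]{(k_2,k_2+1)}\cdot$ is \emph{not} in bijection with $\A_2$ precisely under (Full)$^{\ast}$ — that is, that the failure of the $\A_2\leftrightarrow\A_3$ pairing to be a bijection is governed by exactly the same inequalities as the failure of the $\A_1$-reduction — which is what makes the last, most complicated formula in part~(3) internally consistent. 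Once the combinatorial dictionary between these inequalities and the structure of $\Gamma^{\ast}(k_2)=\omega(\Gamma(n-k_2))$ is set up (transporting everything from the $k=k_1$ analysis via $\omega$ and then renaming), the weight- and $\dn$-preservation are routine, and each of the six displayed formulas reads off directly.
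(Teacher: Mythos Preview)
Your central cancellation mechanism for the $Q_{k_1}Q_{k_2}$ terms is wrong. You propose to pair $A\in\A_2(w,\Gamma^{\ast}(k_2))$ with $A\cup\{(k_2,k_2+1)\}\in\A_3(w,\Gamma^{\ast}(k_2))$ and claim this preserves $\mcr{\wend}$. It does not: passing from $\A_2$ to $\A_3$ multiplies $\wend(w,A)$ on the right by $s_{k_2}$, and since $k_2\notin J$ we have $s_{k_2}\notin W_J$, so in general $\mcr{\wend(w,A)s_{k_2}}\ne\mcr{\wend(w,A)}$. Indeed, the very formula you are trying to prove in part~(3) displays the two \emph{distinct} classes $[\cO^{\mcr{\wend(w,A)(k_1,k_2+1)}}]$ and $[\cO^{\mcr{\wend(w,A)(k_1,k_2+1)s_{k_2}}}]$, coming from $\A_2$ and $\A_3$ respectively. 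Moreover, by (the $\omega$-image of) Lemma~\ref{lem:quantum_edge_2-step_2} the Bruhat step $(k_2,k_2+1)$ is \emph{always} available after the quantum $(k_1,k_2+1)$ step, so your ``leftover'' criterion (``the extra Bruhat step is forced to not exist'') is never satisfied and your scheme would erase all $Q_{k_1}Q_{k_2}$ terms in every case, contradicting (2)(b) and (3).

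The paper's proof is a single line: Theorem~\ref{thm:2-step_6} follows from Theorem~\ref{thm:2-step_3} by applying the diagram automorphism $\omega$ (this is stated at the start of Section~\ref{subsec:proof1}). If you want to argue directly instead, you must mirror the proof of Theorem~\ref{thm:2-step_3}: the cancellations for $\A_2$ and $\A_3$ are carried out \emph{separately}, each by an involution that inserts or deletes a single Bruhat-cover root \emph{outside} the final segment (e.g.\ a root $(i,k_2+1)$ with $i\le k_1$, or $(k_2,j)$ with $j>k_2+1$, chosen via a predecessor/successor value), so that the extra reflection lies in $W_J$ and $\mcr{\wend}$ is genuinely preserved. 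Only in one exceptional sub-case (the analogue of Lemma~\ref{lem:2-step_cancellation_3-2-1-A23'}) are specific subsets of $\A_2$ and $\A_3$ matched to each other, and there the matching adds and removes \emph{two} roots against one, not a bare $(k_2,k_2+1)$.
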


\begin{exa}
In this example, we consider the case that $n = 6$ and $(k_{1}, k_{2}) = (2, 4)$. Let $w = s_{4}s_{1}s_{2}s_{3}s_{5}s_{4}s_{3}s_{2}$. Then, $w$ satisfies condition (Q), and we see that $w(k_{2}+1) (= w(5)) {}< w(1) < w(k_{1}+1) (= w(3))$. Recall that $\Gamma(2) = ((1, 6), (1, 5), (1, 4), (1, 3), (2, 6), (2, 5), (2, 4), (2, 3))$. Then Table~3 is the list of all admissible subset $A \in \A(w, \Gamma(5))$ and their statistics $\wend(w, A)$, $\dn(w, A)$, together with $\mcr{\wend(w, A)}$. 
\begin{table}[ht]
\caption{The list of all admissible subsets $A \in \A(s_{4}s_{1}s_{2}s_{3}s_{5}s_{4}s_{3}s_{2}, \Gamma(2))$}
\label{tab:exm_2-step}
\centering
\begin{tabular}{|c|ccc|} \hline
$A$ & $\wend(w, A)$ & $\mcr{\wend(w, A)}$ & $\dn(w, A)$ \\ \hline
$\emptyset$ & $s_{4}s_{5}s_{1}s_{2}s_{3}s_{4}s_{3}s_{2}$ & $s_{4}s_{5}s_{1}s_{2}s_{3}s_{4}s_{3}s_{2}$ & $0$ \\ 
$\{4\}$ & $s_{4}s_{5}s_{1}s_{2}s_{3}s_{4}s_{3}s_{1}s_{2}$ & $s_{4}s_{5}s_{1}s_{2}s_{3}s_{4}s_{3}s_{1}s_{2}$ & $0$ \\ 
$\{6\}$ & $s_{4}s_{5}s_{1}$ & $s_{4}$ & $\alpha_{2}^{\vee} + \alpha_{3}^{\vee} + \alpha_{4}^{\vee}$ \\ 
$\{8\}$ & $s_{4}s_{5}s_{1}s_{2}s_{3}s_{4}s_{3}$ & $s_{4}s_{5}s_{1}s_{2}s_{3}s_{4}$ & $\alpha_{2}^{\vee}$ \\ 
$\{4, 6\}$ & $s_{4}s_{5}s_{2}s_{1}$ & $s_{4}s_{2}$ & $\alpha_{2}^{\vee} + \alpha_{3}^{\vee} + \alpha_{4}^{\vee}$ \\ 
$\{4, 8\}$ & $s_{4}s_{5}s_{1}s_{2}s_{3}s_{4}s_{3}s_{1}$ & $s_{4}s_{5}s_{1}s_{2}s_{3}s_{4}$ & $\alpha_{2}^{\vee}$ \\ 
$\{6, 8\}$ & $s_{4}s_{5}s_{1}s_{2}$ & $s_{4}s_{1}s_{2}$ & $\alpha_{2}^{\vee} + \alpha_{3}^{\vee} + \alpha_{4}^{\vee}$ \\ 
$\{4, 6, 8\}$ & $s_{4}s_{5}s_{1}s_{2}s_{1}$ & $s_{4}s_{1}s_{2}$ & $\alpha_{2}^{\vee} + \alpha_{3}^{\vee} + \alpha_{4}^{\vee}$ \\ \hline
\end{tabular}
\end{table}

By Theorem~\ref{qkchev}, in $QK_{T}^{\mathrm{poly}}(G/B)$, we have: 
\begin{equation} \label{eq:Chevalley_full_example_2-step}
\begin{split}
& [\cO(-\vpi_{2})] \cdot [\cO^{s_{4}s_{5}s_{1}s_{2}s_{3}s_{4}s_{3}s_{2}}] \\ 
&= \be^{s_{4}s_{5}s_{1}s_{2}s_{3}s_{4}s_{3}s_{2}\vpi_{2}} ( [\cO^{s_{4}s_{5}s_{1}s_{2}s_{3}s_{4}s_{3}s_{2}}] - [\cO^{s_{4}s_{5}s_{1}s_{2}s_{3}s_{4}s_{3}s_{1}s_{2}}] \\ 
& \quad - Q_{2}Q_{3}Q_{4} [\cO^{s_{4}s_{5}s_{1}}] - Q_{2} [\cO^{s_{4}s_{5}s_{1}s_{2}s_{3}s_{4}s_{3}}] + Q_{2}Q_{3}Q_{4} [\cO^{s_{4}s_{5}s_{2}s_{1}}] \\ 
& \quad + Q_{2} [\cO^{s_{4}s_{5}s_{1}s_{2}s_{3}s_{4}s_{3}s_{1}}] + Q_{2}Q_{3}Q_{4} [\cO^{s_{4}s_{5}s_{1}s_{2}}] - Q_{2}Q_{3}Q_{4} [\cO^{s_{4}s_{5}s_{1}s_{2}s_{1}}]). 
\end{split}
\end{equation}
By applying the surjection $\Phi_{J}: QK_{T}^{\mathrm{poly}}(G/B) \rightarrow QK_{T}^{\mathrm{poly}}(G/P_{J})$ to equation~\eqref{eq:Chevalley_full_example_2-step}, we obtain the following cancellation-free formula in $QK_{T}^{\mathrm{poly}}(G/P_{J}) \subset QK_{T}(G/P_{J})$; here, the underlined terms in the first equality are canceled out: 
\begin{align}
\begin{split}
& [\cO(-\vpi_{2})] \cdot [\cO^{s_{4}s_{5}s_{1}s_{2}s_{3}s_{4}s_{3}s_{2}}] \\ 
&= \be^{s_{4}s_{5}s_{1}s_{2}s_{3}s_{4}s_{3}s_{2}\vpi_{2}} ( [\cO^{s_{4}s_{5}s_{1}s_{2}s_{3}s_{4}s_{3}s_{2}}] - [\cO^{s_{4}s_{5}s_{1}s_{2}s_{3}s_{4}s_{3}s_{1}s_{2}}] \\ 
& \quad - Q_{2}Q_{3}Q_{4} [\cO^{s_{4}}] \underline{{}- Q_{2} [\cO^{s_{4}s_{5}s_{1}s_{2}s_{3}s_{4}}]} + Q_{2}Q_{3}Q_{4} [\cO^{s_{4}s_{2}}] \\ 
& \quad \underline{{}+ Q_{2} [\cO^{s_{4}s_{5}s_{1}s_{2}s_{3}s_{4}}]} \underline{{}+ Q_{2}Q_{3}Q_{4} [\cO^{s_{4}s_{1}s_{2}}]} \underline{{}- Q_{2}Q_{3}Q_{4} [\cO^{s_{4}s_{1}s_{2}}]}). 
\end{split} \\ 
&= \be^{s_{4}s_{5}s_{1}s_{2}s_{3}s_{4}s_{3}s_{2}\vpi_{2}} ( [\cO^{s_{4}s_{5}s_{1}s_{2}s_{3}s_{4}s_{3}s_{2}}] - [\cO^{s_{4}s_{5}s_{1}s_{2}s_{3}s_{4}s_{3}s_{1}s_{2}}] - Q_{2}Q_{4} [\cO^{s_{4}}] + Q_{2}Q_{4} [\cO^{s_{4}s_{2}}]). 
\end{align}

Also, we deduce that $\A_{\lessdot}(w, \Gamma(2)) = \{ \emptyset, \{4\} \}$. Therefore, we see that 
\begin{align}
& \text{(RHS of the equation in Theorem~\ref{thm:2-step_3}\,(2)(b))} \\ 
\begin{split}
&= \be^{s_{4}s_{5}s_{1}s_{2}s_{3}s_{4}s_{3}s_{2}\vpi_{2}} ( ([\cO^{s_{4}s_{5}s_{1}s_{2}s_{3}s_{4}s_{3}s_{2}}] - Q_{2}Q_{4} [\cO^{\mcr{s_{4}s_{5}s_{1}}}]) \\ 
& \quad - ([\cO^{s_{4}s_{5}s_{1}s_{2}s_{3}s_{4}s_{3}s_{1}s_{2}}] - Q_{2}Q_{4} [\cO^{\mcr{s_{4}s_{5}s_{2}s_{1}}}]) )
\end{split} \\ 
&= \be^{s_{4}s_{5}s_{1}s_{2}s_{3}s_{4}s_{3}s_{2}\vpi_{2}} ([\cO^{s_{4}s_{5}s_{1}s_{2}s_{3}s_{4}s_{3}s_{2}}] - Q_{2}Q_{4} [\cO^{s_{4}}] - [\cO^{s_{4}s_{5}s_{1}s_{2}s_{3}s_{4}s_{3}s_{1}s_{2}}] + Q_{2}Q_{4} [\cO^{s_{4}s_{2}}]) \\ 
&= [\cO(-\vpi_{2})] \cdot [\cO^{s_{4}s_{5}s_{1}s_{2}s_{3}s_{4}s_{3}s_{2}}]. 
\end{align}
Thus Theorem~\ref{thm:2-step_3}\,(2)(b) holds in this case. 
\end{exa}

\begin{rema}
In \cite[Theorem~4.5]{Xu}, Xu obtained a Chevalley formula for \emph{incidence varieties}, that is, for the two-step flag manifold $G/P_{J}$ in the case that $J = I \setminus \{1, n-1\}$, by a completely different method of proof than ours of Theorems~\ref{thm:2-step_1}, \ref{thm:2-step_2}, \ref{thm:2-step_3}, \ref{thm:2-step_4}, \ref{thm:2-step_5}, and \ref{thm:2-step_6}. We can verify that in this case, our Chevalley formula coincides with the one in \cite[Theorem~4.5]{Xu} for incidence varieties. As an example, we compare Theorem~20\,(2) with \cite[Equation\,(9) of Theorem~4.5]{Xu}; this is a most complicated case. 
As for Theorems~\ref{thm:2-step_1} and \ref{thm:2-step_2}\,(1), we can also compare our formulas and Xu's ones by the same argument as below. As for Theorem~\ref{thm:2-step_3}, $w$ should be be the unique element of $W^{J}$ such that $w(1) = n$ and $w(n) = 1$, and hence we can compare the formulas by direct calculation. 
As for Theorems~\ref{thm:2-step_4}, \ref{thm:2-step_5}, and \ref{thm:2-step_6}, we can show the coincidence of the formulas from that of the formulas in Theorems~\ref{thm:2-step_1}, \ref{thm:2-step_2}, and \ref{thm:2-step_3} by applying the diagram automorphism $\omega$ (see Section~\ref{subsec:proof1}). 

Throughout this remark, we assume that $k_{1} = 1$, $k_{2} = n-1$. 
Note that under this assumption, for $1 \le i, j \le n$ with $i \not= j$, there exists a unique $w \in W^{J}$ such that $w(1) = i$ and $w(n) = j$; in such a case, we write $w = [i, j]$, as in \cite{Xu}. 

We assume that $w \in W^{J}$ satisfies the following: 
\begin{itemize}
\item $w(k_{1}) > w(k_{1}+1)$, 
\item condition (Q) does not hold, 
\item $w(1) > w(k_{1}+1)$, 
\item $w(k_{1}) > w(k_{2})$, 
\end{itemize}
and set $i := w(1)$, $j := w(n)$ (i.e., $w = [i, j]$). Under these assumptions, we see that $i+1 \equiv j \mod n$ if and only if $i = n-1$ and $j =n$ (i.e., $w = [n-1, n]$). 

Let us compute the product $[\cO^{s_{1}}] \cdot [\cO^{w}]$ by our Chevalley formula. 
Recall that 
\begin{equation}
\Gamma(1) = ((1,n), (1,n-1), \ldots, (1,2))
\end{equation}
(with all roots negated). First, assume that $w = [n-1, n]$. Then, by Proposition~2, we deduce that $\A_{\lessdot}([n-1, n], \Gamma(1)) = \{ \emptyset, \{(1, n)\}\}$. By Theorem~20\,(2), we compute: 
\begin{align}
& [\cO(-\vpi_{1})] \cdot [\cO^{[n-1, n]}] \\ 
&= \be^{[n-1, n]\vpi_{1}} \sum_{A \in \A_{\lessdot}([n-1, n], \Gamma(1))} (-1)^{|A|} \left( [\cO^{\wend([n-1, n], A)}] - Q_{1} [\cO^{\mcr{\wend([n-1, n], A)s_{1}}}] \right) \\ 
&= \be^{\ve_{n-1}} \left( \underbrace{\left( [\cO^{[n-1, n]}] - Q_{1}[\cO^{[1,n]}] \right)}_{A = \emptyset} - \underbrace{\left( [\cO^{[n, n-1]}] - Q_{1}[\cO^{[1, n-1]}] \right)}_{A = \{(1, n)\}} \right). 
\end{align}
By the well-known formula $[\cO^{s_{1}}] = 1 - \be^{-\vpi_{1}} [\cO(-\vpi_{1})]$ (cf., \cite[Section~4.1]{bcmcfe}), we see that 
\begin{align}
& [\cO^{s_{1}}] \cdot [\cO^{[n-1, n]}] \\ 
&= (1 - \be^{-\ve_{1}} [\cO(-\vpi_{1})]) \cdot [\cO^{[n-1, n]}] \\ 
&= [\cO^{[n-1, n]}] - \be^{-\ve_{1}} [\cO(-\vpi_{1})] \cdot [\cO^{[n-1, n]}] \\ 
&= [\cO^{[n-1, n]}] - \be^{\ve_{n-1} - \ve_{1}} \left( \left( [\cO^{[n-1, n]}] - Q_{1}[\cO^{[1,n]}] \right) - \left( [\cO^{[n, n-1]}] - Q_{1}[\cO^{[1, n-1]}] \right) \right) \\ 
&= (1- \be^{\ve_{n-1} - \ve_{1}}) [\cO^{[n-1, n]}] + \be^{\ve_{n-1}-\ve_{1}} \left( Q_{1} [\cO^{[1, n]}] + [\cO^{[n, n-1]}] - Q_{1}[\cO^{[1, n-1]}] \right). 
\end{align}
This result agrees with the second equation of \cite[Equation\,(9) of Theorem~4.5]{Xu}. 

Next, we consider the case $w = [i, j] \not= [n-1, n]$. In this case, we see that $i+1 \not\equiv j \mod n$. Since condition (Q) does not hold, we have $w(n) \not = 1$. Also, we have $w(n) \not= n$; this is because if $w(n) = n$, then $w$ must be $[n-1, n]$ under our assumptions. These facts imply that $w(1) = i = n$ and $w(2) = 1$. By Proposition~2, we deduce that $\A_{\lessdot}(w, \Gamma(1)) = \{ \emptyset \}$. Therefore, we compute: 
\begin{align}
& [\cO(-\vpi_{1})] \cdot [\cO^{[n, j]}] \\ 
&= \be^{w\vpi_{1}} \sum_{A \in \A_{\lessdot}([n, j], \Gamma(1))} (-1)^{|A|} \left( [\cO^{\wend([n, j], A)}] - Q_{1} [\cO^{\mcr{\wend([n, j], A)s_{1}}}] \right) \\ 
&= \be^{\ve_{n}} \left( [\cO^{[n, j]}] - Q_{1}[\cO^{[1, j]}] \right). 
\end{align}
Again, since $[\cO^{s_{1}}] = 1 - \be^{-\vpi_{1}} [\cO(-\vpi_{1})]$, we see that 
\begin{align}
& [\cO^{s_{1}}] \cdot [\cO^{[n, j]}] \\ 
&= (1 - \be^{-\vpi_{1}} [\cO(-\vpi_{1})]) \cdot [\cO^{[n, j]}] \\ 
&= [\cO^{[n, j]}] - \be^{-\ve_{1}} [\cO(-\vpi_{1})] \cdot [\cO^{[n, j]}] \\ 
&= [\cO^{[n, j]}] - \be^{\ve_{n} - \ve_{1}} \left( [\cO^{[n, j]}] - Q_{1}[\cO^{[1, j]}] \right) \\ 
&= (1 - \be^{\ve_{n} - \ve_{1}}) [\cO^{[n, j]}] + \be^{\ve_{n} - \ve_{1}} Q_{1} [\cO^{[1, j]}]. 
\end{align}
This result agrees with the first equation of \cite[Equation\,(9) of Theorem~4.5]{Xu}. 
\end{rema}

\subsection{Proofs of parabolic Chevalley formulas: Part~1} \label{subsec:proof1}

In this and the next subsection, we give proofs of the results stated in the previous subsection. 
Since Theorems~\ref{thm:2-step_4}, \ref{thm:2-step_5}, and \ref{thm:2-step_6} follow from Theorems~\ref{thm:2-step_1}, \ref{thm:2-step_2}, and \ref{thm:2-step_3}, respectively, by applying the diagram automorphism $\omega: [n-1] \rightarrow [n-1]$, it suffices to prove Theorems~\ref{thm:2-step_1}, \ref{thm:2-step_2}, and \ref{thm:2-step_3}. 
Note that the diagram automorphism $\omega$ induces a group automorphism $\omega: W \xrightarrow{\sim} W$, $s_{l} \mapsto s_{\omega(l)}$, together with a linear automorphism $\omega: \mathfrak{h}_{\mathbb{R}}^{*} \xrightarrow{\sim} \mathfrak{h}_{\mathbb{R}}^{*}$, $\varpi_{l} \mapsto \varpi_{\omega(l)}$, and also an isomorphism $\omega: G/P_{J} \xrightarrow{\sim} G/P_{\omega(J)}$ of varieties; recall that $G$ is simply-connected. 
Hence, as mentioned in \cite[Sections 8.1 and 8.3]{mnsdem}, we see that there exists a $\bZ$-module isomorphism $\omega: QK_{T}(G/P_{J}) \xrightarrow{\sim} QK_{T}(G/P_{\omega(J)})$ such that
\begin{equation}
\be^{\mu} [\cO^{w}] \mapsto \be^{\omega(\mu)} [\cO^{\omega(w)}]  
\end{equation}
for $\mu \in \Lambda$, $w \in W^{J}$, and such that $\omega(Q_{l}) = Q_{\omega(l)}$ for $l \in I \setminus J$. 
In this subsection, we give proofs of Theorems~\ref{thm:2-step_1} and \ref{thm:2-step_2}. 

By Remark~\ref{rem:Chevalley_cancellation-free}, we obtain the following. 
\begin{lem} \label{lem:2-step_cancellation-free_Bruhat}
The sum 
\begin{equation}
\be^{w\varpi_{k_{1}}} \sum_{A \in \A_{\lessdot}(w, \Gamma(k_{1}))} (-1)^{|A|} [\cO^{\wend(w, A)}]
\end{equation}
is cancellation-free.
\end{lem}

Also, by making use of  Proposition~\ref{prop:quantum_bruhat_order_type_A}, 
we can verify the following.
\begin{lem} \label{lem:condition_nonempty}
The following hold. 
\begin{enumerate}
\item We have $\A_{1}(w, \Gamma(k_{1})) \not= \emptyset$ if and only if $w(k_{1}) > w(k_{1}+1)$. 
\item We have $\A_{2}(w, \Gamma(k_{1})) \not= \emptyset$ if and only if condition \textup{(Q)} holds. 
\end{enumerate}
\end{lem}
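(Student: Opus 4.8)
The plan is to analyze when the directed paths described in Lemma~\ref{lem:containing_quantum_steps} can occur, using the explicit criterion in Proposition~\ref{prop:quantum_bruhat_order_type_A} together with the structure of the $(-\varpi_{k_1})$-chain $\Gamma(k_1)$ in~\eqref{omegakchain} and the characterization of $W^J$ in terms of increasing windows. Recall that $A \in \A_1(w, \Gamma(k_1))$ means $\Pi(w,A)$ ends with a quantum edge labeled $(k_1, k_1+1)$, all previous edges being Bruhat covers, while $A \in \A_2(w, \Gamma(k_1))$ means $\Pi(w,A)$ ends with a quantum edge labeled $(k_1, k_2+1)$, all previous edges being Bruhat covers.

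\textbf{Part (1).} First I would show that if $w(k_1) < w(k_1+1)$, then $\A_1(w,\Gamma(k_1)) = \emptyset$. By Lemma~\ref{lem:quantum_edge_2-step}, every Bruhat cover along $\Pi(w,A)$ with one endpoint index $\le k_1$ and the other $> k_1$ stays within $W^J$, and since the roots $(i,j)$ in $\Gamma(k_1)$ all have $i \le k_1 < j$, the partial products $w_0, w_1, \ldots$ all lie in $W^J$; moreover the structure of $\Gamma(k_1)$ forces $A$ to contain at most one root of the form $(k_1, \cdot\,)$, and $(k_1, k_1+1)$ is the very last root of $\Gamma(k_1)$. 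Before that last root is applied, only Bruhat covers of the form in Lemma~\ref{lem:quantum_edge_2-step}(1) have been used, and none of them involves both positions $k_1$ and $k_1+1$ simultaneously (they each use at most one of $i = k_1$ or $j \in \{k_1+1, \ldots\}$, but to change the relative order of positions $k_1$ and $k_1+1$ one would need the transposition $(k_1, k_1+1)$ itself, or a straddling configuration that Proposition~\ref{prop:quantum_bruhat_order_type_A} forbids in $W^J$); hence the value at position $k_1$ of $w_{s-1}$ is still $\le$ the value at position $k_1+1$, so $(k_1,k_1+1)$ cannot label a quantum edge out of $w_{s-1}$. Conversely, if $w(k_1) > w(k_1+1)$, I would exhibit $A = \{(k_1,k_1+1)\}$ (the last root of $\Gamma(k_1)$): by Proposition~\ref{prop:quantum_bruhat_order_type_A}, since $w(k_1) > w(k_1+1)$ and there is no $k$ with $k_1 < k < k_1+1$, the edge $w \xrightarrow{(k_1,k_1+1)} ws_{k_1}$ is an edge; it is quantum because $\pair{\rho}{\alpha_{k_1}^\vee} = 1$ gives $\ell(ws_{k_1}) = \ell(w) - 1 = \ell(w) + 1 - 2$. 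Thus $A \in \A_1(w,\Gamma(k_1))$.

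\textbf{Part (2).} For the ``if'' direction, assume condition (Q): $w(k_1) > w(k_2)$ and $w(k_1+1) > w(k_2+1)$. I would construct an explicit $A \in \A_2(w,\Gamma(k_1))$ whose path is a single quantum edge $w \xrightarrow{(k_1,k_2+1)} w(k_1,k_2+1)$; one must check via Proposition~\ref{prop:quantum_bruhat_order_type_A} that this is a quantum edge, i.e. that there is no $k$ with $k_1 < k < k_2+1$ and $w(k_1) \prec w(k) \prec w(k_2+1)$ in the circular order $\prec_{w(k_1)}$, and that $\ell$ drops appropriately. Here Remark~\ref{rem:k1>k1+1} is the key input: (Q) forces $w(k_1) > w(k_2) \ge w(k) \ge w(k_1+1) > w(k_2+1)$ for all $k_1 < k \le k_2$, so in the circular order starting at $w(k_1)$ (going downward), the intermediate values $w(k)$ all satisfy $w(k_2+1) \prec_{w(k_1)} w(k)$, i.e. none lies strictly between $w(k_1)$ and $w(k_2+1)$; this is exactly the no-straddling condition. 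The length computation: $(k_1, k_2+1) = \varepsilon_{k_1} - \varepsilon_{k_2+1}$, and one checks $2\pair{\rho}{(k_1,k_2+1)^\vee} = 2(k_2+1-k_1)$, matching the number of positions strictly between, all of which by (Q) contribute inversions being destroyed — so the edge is quantum. For the ``only if'' direction, suppose $A \in \A_2(w, \Gamma(k_1)) \ne \emptyset$, so $\Pi(w,A)$ has final quantum edge $w_{s-1} \xrightarrow{(k_1, k_2+1)} w_s$ with all prior edges Bruhat covers in $W^J$ of the form in Lemma~\ref{lem:quantum_edge_2-step}(1). Using that the Bruhat covers preceding $(k_1, k_2+1)$ in $\Gamma(k_1)$ only involve roots $(i,j)$ with $i \le k_1$ and $k_2+1 < j \le n$ or $i < k_1$, $j = k_1+1, \ldots$ — and in particular none of them can move a value into position $k_1$ that is smaller than $w(k_2)$ nor into position $k_1+1$ smaller than $w(k_2+1)$, because $w_{s-1} \in W^J$ and the relative orders among positions $k_1, k_1+1, \ldots, k_2, k_2+1$ are constrained — I would trace back to conclude $w(k_1) > w(k_2)$ and $w(k_1+1) > w(k_2+1)$, i.e. (Q) holds.

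The main obstacle I expect is the ``only if'' direction of part (2): carefully controlling how the sequence of Bruhat covers preceding the final quantum edge can (or cannot) alter the values in positions $k_1, k_1+1, k_2, k_2+1$, so as to deduce that condition (Q) must already have held for $w$ itself. This requires a somewhat delicate bookkeeping argument combining the row structure of $\Gamma(k_1)$ in~\eqref{omegakchain}, the at-most-one-root-per-row constraint, the $W^J$ increasing-window property at every stage, and the straddling criterion of Proposition~\ref{prop:quantum_bruhat_order_type_A}; everything else is a direct application of the edge criterion to explicit small examples.
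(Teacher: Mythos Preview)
Your approach matches the paper's: the paper gives no proof beyond the remark ``by making use of Proposition~\ref{prop:quantum_bruhat_order_type_A}, we can verify the following,'' and you are indeed filling in that verification with the same tools (the straddling criterion, the row structure of $\Gamma(k_1)$, and the $W^J$ increasing-window property). Your ``if'' directions are correct as written.

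There are two genuine issues in your Part~(1) ``only if'' argument that you should fix. First, the claim that ``$A$ contains at most one root of the form $(k_1,\cdot)$'' is \emph{false} in the two-step setting: since the window $[k_1+1,n]$ is split into two increasing blocks $[k_1+1,k_2]$ and $[k_2+1,n]$, one can have a Bruhat cover $(k_1,j)$ with $j>k_2$ followed by another $(k_1,j')$ with $k_1+1<j'\le k_2$. Fortunately this claim is not needed. Second, the sentence ``to change the relative order of positions $k_1$ and $k_1+1$ one would need the transposition $(k_1,k_1+1)$ itself'' is not a valid argument. The correct reasoning is inductive: if $w(k_1)<w(k_1+1)$, then no root $(i,k_1+1)$ with $i<k_1$ can be a Bruhat cover at any stage $u$, because $u(i)<u(k_1)=w(k_1)<u(k_1+1)$ forces position $k_1$ to be straddled; hence $w_{s-1}(k_1+1)=w(k_1+1)$. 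Then any Bruhat cover $(k_1,j)$ with $j>k_1+1$ applied to $u\in W^J$ has $u(k_1+1)=w(k_1+1)$ sitting strictly between $k_1$ and $j$, so non-straddling forces $u(j)<w(k_1+1)$; inductively $w_{s-1}(k_1)<w(k_1+1)=w_{s-1}(k_1+1)$, and the quantum edge $(k_1,k_1+1)$ is impossible.

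An entirely analogous inductive straddling argument handles the ``only if'' direction of Part~(2): if $w(k_1)<w(k_2)$, then position $k_2$ (which is untouched before $(k_1,k_2+1)$) blocks $w_{s-1}(k_1)$ from ever exceeding $w(k_2)$; if $w(k_1+1)<w(k_2+1)$, the argument is symmetric using position $k_1+1$. Once you set up Part~(1) correctly, Part~(2) is no harder, so your anticipated ``main obstacle'' is less severe than you expect.
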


\begin{rema}
It is obvious that $\A_{2}(w, \Gamma(k_{1})) \not= \emptyset$ if and only if $\A_{3}(w, \Gamma(k_{1})) \not= \emptyset$. 
\end{rema}

\begin{rema}
If $w(k_{1}) > w(k_{1}+1)$, then we have 
\begin{equation} \label{eq:A1}
\A_{1}(w, \Gamma(k_{1})) = \{ A \sqcup \{(k_{1}, k_{1}+1)\} \mid A \in \A_{\lessdot}(w, \Gamma(k_{1})) \}\,. 
\end{equation}
Also, if condition (Q) holds, then we have 
\begin{align}
\A_{2}(w, \Gamma(k_{1})) &= \{ A \sqcup \{(k_{1}, k_{2}+1)\} \mid A \in \A_{\lessdot}(w, \Gamma(k_{1})) \}\,, \label{eq:A2} \\ 
\A_{3}(w, \Gamma(k_{1})) &= \{ A \sqcup \{(k_{1}, k_{2}+1), (k_{1}, k_{1}+1)\} \mid A \in \A_{\lessdot}(w, \Gamma(k_{1})) \}\,. \label{eq:A3} 
\end{align}
\end{rema}

\begin{proof}[Proof of Theorem~\ref{thm:2-step_1}]
By Lemma~\ref{lem:condition_nonempty}, we have $\A(w, \Gamma(k_{1})) = \A_{\lessdot}(w, \Gamma(k_{1}))$. Therefore, the theorem follows from Lemma~\ref{lem:2-step_cancellation-free_Bruhat}. 
\end{proof}

In the rest of this subsection, we assume that $w(k_{1}) > w(k_{1}+1)$, and assume that condition (Q) does not hold. 
Hence we have $\A_{2}(w, \Gamma(k_{1})) = \A_{3}(w, \Gamma(k_{1})) = \emptyset$. 

First, assume that $w(1) < w(k_{1}+1)$. Take the maximal $1 \leq p \leq k_{1}$ such that $w(p) < w(k_{1}+1)$. Then, we can define an involution $\iota$ on $\A_{1}(w, \Gamma(k_{1}))$ as follows: set 
\begin{align}
\A_{1}^{1}(w, \Gamma(k_{1})) &:= \{ A \in \A_{1}(w, \Gamma(k_{1})) \mid (p, k_{1}+1) \in A \}\,, \\ 
\A_{1}^{2}(w, \Gamma(k_{1})) &:= \{ A \in \A_{1}(w, \Gamma(k_{1})) \mid (p, k_{1}+1) \notin A \}\,, 
\end{align}
and define $\iota$ by 
\begin{align}
A \in \A_{1}^{2}(w, \Gamma(k_{1})) \ &\mapsto \ \iota(A) := A \sqcup \{(p, k_{1}+1)\} \in \A_{1}^{1}(w, \Gamma(k_{1}))\,, \\ 
A \in \A_{1}^{1}(w, \Gamma(k_{1})) \ &\mapsto \ \iota(A) := A \setminus \{(p, k_{1}+1)\} \in \A_{1}^{2}(w, \Gamma(k_{1}))\,. 
\end{align}
This $\iota$ has the following properties: 
\begin{itemize}
\item $\wend(w, \iota(A)) = \wend(w, A)(p, k_{1})$ (and hence $\mcr{\wend(w, \iota(A))} = \mcr{\wend(w, A)}$); 
\item $|\iota(A)| = |A| \pm 1$. 
\end{itemize}
By using the involution $\iota$, we obtain the following. 

\begin{lem} \label{lem:2-step_cancellation_2-1-1}
Assume that $w(k_{1}) > w(k_{1}+1)$, and assume that condition \textup{(Q)} does not hold. If $w(1) < w(k_{1}+1)$, then 
\begin{equation}
\be^{w\varpi_{k_{1}}} \sum_{A \in \A_{1}(w, \Gamma(k_{1}))} (-1)^{|A|} Q_{k_{1}} [\cO^{\mcr{\wend(w, A)}}] = 0\,. 
\end{equation}
\end{lem}

\begin{rema} \label{rem:2-step_cancellation_A1}
Even if we assume condition (Q), the identity in Lemma~\ref{lem:2-step_cancellation_2-1-1} is still valid, if all the conditions of this lemma other than the negation of condition (Q) hold. This is because the involution $\iota$ above is well-defined whether or not condition (Q) holds. 
\end{rema}

Next, assume that $w(k_{1}) < w(k_{2})$. Take the minimal $k_{1}+1 \leq q \leq k_{2}$ such that $w(k_{1}) < w(q)$. Then, we can define an involution $\iota$ on $\A_{1}(w, \Gamma(k_{1}))$ as follows: set 
\begin{align}
\A_{1}^{1}(w, \Gamma(k_{1})) &:= \{ A \in \A_{1}(w, \Gamma(k_{1})) \mid (k_{1}, q) \in A \}\,, \\ 
\A_{1}^{2}(w, \Gamma(k_{1})) &:= \{ A \in \A_{1}(w, \Gamma(k_{1})) \mid (k_{1}, q) \notin A \}\,, \\ 
\end{align}
and define $\iota$ by 
\begin{align}
A \in \A_{1}^{2}(w, \Gamma(k_{1})) \ &\mapsto \ \iota(A) := A \sqcup \{(k_{1}, q)\} \in \A_{1}^{1}(w, \Gamma(k_{1}))\,, \\ 
A \in \A_{1}^{1}(w, \Gamma(k_{1})) \ &\mapsto \ \iota(A) := A \setminus \{(k_{1}, q)\} \in \A_{1}^{2}(w, \Gamma(k_{1}))\,. 
\end{align}
This $\iota$ has the following properties: 
\begin{itemize}
\item $\wend(w, \iota(A)) = \wend(w, A)(k_{1}+1, q)$ (and hence $\mcr{\wend(w, \iota(A))} = \mcr{\wend(w, A)}$); 
\item $|\iota(A)| = |A| \pm 1$. 
\end{itemize}
By using the involution $\iota$, we obtain the following. 

\begin{lem} \label{lem:2-step_cancellation_2-1-2}
Assume that $w(k_{1}) > w(k_{1}+1)$, and assume that condition \textup{(Q)} does not hold. If $w(k_{1}) < w(k_{2})$, then 
\begin{equation}
\be^{w\varpi_{k_{1}}} \sum_{A \in \A_{1}(w, \Gamma(k_{1}))} (-1)^{|A|} Q_{k_{1}} [\cO^{\mcr{\wend(w, A)}}] = 0\,. 
\end{equation}
\end{lem}

\begin{proof}[Proof of Theorem~\ref{thm:2-step_2}\,(1)]
By Lemmas~\ref{lem:2-step_cancellation_2-1-1} and \ref{lem:2-step_cancellation_2-1-2}, we deduce that 
\begin{equation}
\be^{w\varpi_{k_{1}}} \sum_{A \in \A_{1}(w, \Gamma(k_{1}))} (-1)^{|A|} Q_{k_{1}} [\cO^{\mcr{\wend(w, A)}}] = 0\,. 
\end{equation}
Therefore, we obtain the desired cancellation-free formula from Lemma~\ref{lem:2-step_cancellation-free_Bruhat}, together with the fact that $\A_{2}(w, \Gamma(k_{1})) = \A_{3}(w, \Gamma(k_{1})) = \emptyset$. 
\end{proof}

We assume that $w(1) > w(k_{1}+1)$ and $w(k_{1}) > w(k_{2})$ until the end of 
this subsection. 
Let $A \in \A_{1}(w, \Gamma(k_{1}))$, and set $y := \wend(w, A \setminus \{(k_{1}, k_{1}+1)\})$. Since $A \setminus \{(k_{1}, k_{1}+1)\}$ contains only Bruhat steps, we see that $y(k_{1}+1) < y(1)$ and $y(k_{2}) < y(k_{1})$. Therefore, if we set $z := ys_{k_{1}} = \wend(w, A)$, then we have 
\begin{itemize}
\item $z(k_{1}) < z(1) < z(2) < \cdots < z(k_{1}-1)$, 
\item $z(k_{1}+2) < z(k_{1}+3) < \cdots < z(k_{2}) < z(k_{1}+1)$, and 
\item $z(k_{2}+1) < z(k_{2}+2) < \cdots < z(n)$; 
\end{itemize}
hence, if we take cyclic permutations $\sigma_{1} := (1, k_{1}, k_{1}-1, \ldots, 2)$ (if $k_{1} = 1$, then we take $\sigma_{1} := e$, the identity permutation) 
and $\sigma_{2} := (k_{1}+1, k_{1}+2, \ldots, k_{2})$ (if $k_{1} + 1 = k_{2}$, then we take $\sigma_{2} := e$), 
then we deduce that $\mcr{z} = z\sigma_{1}\sigma_{2}$. 
Note that the definitions of $\sigma_{1}$ and $\sigma_{2}$ do not depend on the choice of $A$. Thus, for $A, B \in \A_{1}(w, \Gamma(k_{1}))$ with $A \not= B$, it follows that 
\begin{equation}
\mcr{\wend(w, A)} = \wend(w, A)\sigma_{1}\sigma_{2} \not= \wend(w, B)\sigma_{1}\sigma_{2} = \mcr{\wend(w, B)}\,. 
\end{equation}
Since the right-hand side of equation \eqref{qkchev-f} in Theorem~\ref{qkchev} is cancellation-free, as mentioned in Remark~\ref{rem:Chevalley_cancellation-free},
this proves the following. 

\begin{lem} \label{lem:2-step_cancellation-free_2-2}
Assume that $w(k_{1}) > w(k_{1}+1)$, and assume that condition \textup{(Q)} does not hold. If $w(1) > w(k_{1}+1)$ and $w(k_{1}) > w(k_{2})$, then the sum 
\begin{equation}\label{eq:2-step_A1}
\be^{w\varpi_{k_{1}}} \sum_{A \in \A_{1}(w, \Gamma(k_{1}))} (-1)^{|A|} Q_{k_{1}} [\cO^{\mcr{\wend(w, A)}}] 
\end{equation}
is cancellation-free. 
\end{lem}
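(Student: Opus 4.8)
The plan is to establish Lemma~\ref{lem:2-step_cancellation-free_2-2} by showing that the terms in the sum \eqref{eq:2-step_A1} are indexed injectively by $\A_{\lessdot}(w,\Gamma(k_1))$ via equation~\eqref{eq:A1}, and that the surviving labels $\mcr{\wend(w,A)}$ are pairwise distinct; once distinctness is verified, no two terms can cancel, and the sum is automatically cancellation-free. Concretely, for $A\in\A_1(w,\Gamma(k_1))$ write $A = B\sqcup\{(k_1,k_1+1)\}$ with $B\in\A_{\lessdot}(w,\Gamma(k_1))$, set $y:=\wend(w,B)$ and $z:=ys_{k_1}=\wend(w,A)$. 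The first step is to record the monotonicity properties of $y$ that follow from the two hypotheses $w(1)>w(k_1+1)$ and $w(k_1)>w(k_2)$ together with the structure of $\Gamma(k_1)$: since $B$ consists only of Bruhat steps drawn from the $(-\varpi_{k_1})$-chain, applying the criterion in Proposition~\ref{prop:quantum_bruhat_order_type_A} repeatedly (as in the proof of Lemma~\ref{paredge}) gives $y(k_1+1)<y(1)$ and $y(k_2)<y(k_1)$, which after the transposition $s_{k_1}$ yields the three displayed chains of inequalities for $z$.

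The second step is to convert these inequalities into an explicit formula for the minimal coset representative $\mcr{z}$. The inequalities say precisely that, within the three windows $[1,k_1]$, $[k_1+1,k_2]$, $[k_2+1,n]$, the entry $z(k_1)$ is the smallest in the first window but sits in front, and $z(k_1+1)$ is the largest in the second window but sits in front, while the third window is already increasing. Thus the unique cyclic rotations $\sigma_1=(1\ k_1\ k_1-1\ \cdots\ 2)$ and $\sigma_2=(k_1+1\ k_1+2\ \cdots\ k_2)$ (degenerating to the identity when $k_1=1$ or $k_1+1=k_2$) sort all three windows, so $\mcr{z}=z\sigma_1\sigma_2$. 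The key observation is that $\sigma_1$ and $\sigma_2$ depend only on $w$ (through the window boundaries $k_1,k_2$), not on the choice of $A$; this is where the hypotheses are used, since they guarantee that every $z=\wend(w,A)$ has exactly the same ``misplacement pattern.''

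The third step is the distinctness argument: right-multiplication by the fixed permutation $\sigma_1\sigma_2$ is a bijection on $S_n$, so if $A\ne B$ in $\A_1(w,\Gamma(k_1))$ then, since the right-hand side of \eqref{qkchev-f} is cancellation-free (Remark~\ref{rem:Chevalley_cancellation-free}), we have $\wend(w,A)\ne\wend(w,B)$, whence $\mcr{\wend(w,A)}=\wend(w,A)\sigma_1\sigma_2\ne\wend(w,B)\sigma_1\sigma_2=\mcr{\wend(w,B)}$. Because distinct $A$ give distinct Schubert classes in \eqref{eq:2-step_A1}, no cancellation can occur, proving the lemma. The main obstacle is the second step: one must carefully argue that the three chains of inequalities for $z$ really do force $\mcr z=z\sigma_1\sigma_2$ with these specific rotations and, in the degenerate cases $k_1=1$ or $k_1+1=k_2$, that the corresponding $\sigma_i$ collapses to the identity without breaking the argument; everything else is bookkeeping or a direct appeal to Proposition~\ref{prop:quantum_bruhat_order_type_A} and Remark~\ref{rem:Chevalley_cancellation-free}.
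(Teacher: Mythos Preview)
Your proposal is correct and follows essentially the same argument as the paper: both show that for every $A\in\A_1(w,\Gamma(k_1))$ one has $\mcr{\wend(w,A)}=\wend(w,A)\sigma_1\sigma_2$ for fixed cyclic permutations $\sigma_1,\sigma_2$ independent of $A$, and then conclude distinctness via Remark~\ref{rem:Chevalley_cancellation-free}. One small slip: in your informal description, $z(k_1)$ sits at the \emph{back} of the first window (position $k_1$), not the front, but the cyclic permutations you write down are correct and the argument goes through unchanged.
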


\begin{rema} \label{rem:2-step_cancellation-free_A1}
Note that we do not use the negation of condition (Q) in the proof of Lemma~\ref{lem:2-step_cancellation-free_2-2}. Hence the sum \eqref{eq:2-step_A1} is cancellation-free whether or not we assume condition (Q), if all the conditions of Lemma~\ref{lem:2-step_cancellation-free_2-2} other than the negation of (Q) hold. 
\end{rema}

\begin{rema} \label{rem:2-step_deformation_A1}
If $\A_{1}(w, \Gamma(k_{1})) \not= \emptyset$ (or equivalently, $w(k_{1}) > w(k_{1}+1)$), then equation~\eqref{eq:A1} shows that 
\begin{align}
& \be^{w\varpi_{k_{1}}} \sum_{A \in \A_{1}(w, \Gamma(k_{1}))} (-1)^{|A|} Q_{k_{1}} [\cO^{\mcr{\wend(w, A)}}] \\ 
&= - \be^{w\varpi_{k_{1}}} \sum_{A \in \A_{\lessdot}(w, \Gamma(k_{1}))} (-1)^{|A|} Q_{k_{1}} [\cO^{\mcr{\wend(w, A)s_{k_{1}}}}]\,. 
\end{align}
\end{rema}

\begin{proof}[Proof of Theorem~\ref{thm:2-step_2}\,(2)]
The desired identity follows from Lemmas~\ref{lem:2-step_cancellation-free_Bruhat}, \ref{lem:2-step_cancellation-free_2-2}, and Remark~\ref{rem:2-step_deformation_A1}, together with the fact that $\A_{2}(w, \Gamma(k_{1})) = \A_{3}(w, \Gamma(k_{1})) = \emptyset$. 
\end{proof}

\subsection{Proofs of parabolic Chevalley formulas: Part~2} \label{subsec:proof2}

In this subsection, we give a proof of Theorem~\ref{thm:2-step_3}; since we assume condition (Q), we have $w(k_{1}) > w(k_{1}+1)$; see Remark~\ref{rem:k1>k1+1}. 

First, assume that $w(k_{1}) < w(n)$. Then, we can take the minimal $k_{2}+1 \leq q \leq n$ such that $w(k_{1}) < w(q)$, and define an involution $\iota$ on $\A_{l}(w, \Gamma(k_{1}))$, $l = 2, 3$, as follows: for each $l = 2, 3$, we set 
\begin{align}
\A_{l}^{1}(w, \Gamma(k_{1})) &:= \{ A \in \A_{l}(w, \Gamma(k_{1})) \mid (k_{1}, q) \in A \}\,, \\ 
\A_{l}^{2}(w, \Gamma(k_{1})) &:= \{ A \in \A_{l}(w, \Gamma(k_{1})) \mid (k_{1}, q) \notin A \}\,, 
\end{align}
and define $\iota$ by 
\begin{align}
A \in \A_{l}^{2}(w, \Gamma(k_{1})) \ &\mapsto \ \iota(A) := A \sqcup \{(k_{1}, q)\} \in \A_{l}^{1}(w, \Gamma(k_{1}))\,, \\ 
A \in \A_{l}^{1}(w, \Gamma(k_{1})) \ &\mapsto \ \iota(A) := A \setminus \{(k_{1}, q)\} \in \A_{l}^{2}(w, \Gamma(k_{1}))\,. 
\end{align}
This $\iota$ has the following properties: 
\begin{itemize}
\item $\wend(w, \iota(A)) = \wend(w, A)(k_{2}+1, q)$ (and hence $\mcr{\wend(w, \iota(A))} = \mcr{\wend(w, A)}$); 
\item $|\iota(A)| = |A| \pm 1$. 
\end{itemize}
By using the involution $\iota$, we obtain the following. 

\begin{lem} \label{lem:2-step_cancellation_3-1}
Assume condition \textup{(Q)}. If $w(k_{1}) < w(n)$, then for $l = 2, 3$, 
\begin{equation}
\be^{w\varpi_{k_{1}}} \sum_{A \in \A_{l}(w, \Gamma(k_{1}))} (-1)^{|A|} Q_{k_{1}}Q_{k_{2}} [\cO^{\mcr{\wend(w, A)}}] = 0\,. 
\end{equation}
\end{lem}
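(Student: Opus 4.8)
The plan is to prove this identity by exhibiting a fixed‑point‑free sign‑reversing involution, namely the map $\iota$ on $\A_{l}(w,\Gamma(k_1))$ ($l=2,3$) described immediately above the statement. First I would check that the index $q$ is well defined: since $w[k_2+1,n]$ is an increasing sequence (as $w\in W^{J}$) and $w(k_1)<w(n)$ by hypothesis, the candidate $q=n$ already works, so there is a minimal $q\in\{k_2+1,\dots,n\}$ with $w(k_1)<w(q)$. Moreover condition~(Q) forces $w(k_1)>w(k_2)\ge w(k_1+1)>w(k_2+1)$ (see Remark~\ref{rem:k1>k1+1}), so $w(k_1)>w(k_2+1)$ and hence $q\ge k_2+2$; consequently the root $(k_1,q)$ occurs in the last row $(k_1,n),(k_1,n-1),\dots,(k_1,k_1+1)$ of $\Gamma(k_1)$, strictly before the occurrence of $(k_1,k_2+1)$ (and, a fortiori, of $(k_1,k_1+1)$).

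Next I would verify that $\iota$ is a well-defined involution on $\A_{l}(w,\Gamma(k_1))$ for $l=2,3$. The point is that toggling the root $(k_1,q)$ in an admissible subset $A$ — inserting it at its position in $\Gamma(k_1)$ if absent, deleting it if present — again yields a $w$-admissible subset of the same type $l$. By the explicit descriptions \eqref{eq:A2} and \eqref{eq:A3}, every $A\in\A_{l}(w,\Gamma(k_1))$ has the form $B\sqcup\{(k_1,k_2+1)\}$ (for $l=2$) or $B\sqcup\{(k_1,k_2+1),(k_1,k_1+1)\}$ (for $l=3$) with $B\in\A_{\lessdot}(w,\Gamma(k_1))$; since $A$ contains at most one Bruhat‑cover root of the form $(k_1,\cdot)$ and $(k_1,q)$ comes before $(k_1,k_2+1)$, no root of $A$ lies strictly between $(k_1,q)$ and $(k_1,k_2+1)$ in $\Gamma(k_1)$ (these would be among $(k_1,q-1),\dots,(k_1,k_2+2)$, possibly none). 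Using the quantum Bruhat graph criterion of Proposition~\ref{prop:quantum_bruhat_order_type_A} together with the fact that $w[1,k_1]$, $w[k_1+1,k_2]$, $w[k_2+1,n]$ are increasing, one checks that inserting/deleting $(k_1,q)$ produces (resp.\ removes) a Bruhat cover at that spot and leaves untouched the Bruhat/quantum nature of the remaining steps, in particular of $(k_1,k_2+1)$ (and of $(k_1,k_1+1)$ when $l=3$). Thus $\iota$ maps $\A_{l}(w,\Gamma(k_1))$ to itself, is clearly fixed‑point‑free (it changes $|A|$ by $\pm1$), and is an involution since it merely toggles membership of $(k_1,q)$.

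Finally I would record the two stated properties of $\iota$: $|\iota(A)|=|A|\pm1$, which is immediate, and $\wend(w,\iota(A))=\wend(w,A)(k_2+1,q)$, which follows by the same word‑level bookkeeping as for the analogous involutions in Lemmas~\ref{lem:2-step_cancellation_2-1-1} and \ref{lem:2-step_cancellation_2-1-2}, since conjugating the reflection $(k_1,q)$ past the trailing factor $(k_1,k_2+1)$ of the word $\Pi(w,A)$ produces exactly the transposition $(k_2+1,q)$ (and $(k_2+1,q)$ commutes with the possible final factor $(k_1,k_1+1)$). Crucially, $(k_2+1,q)$ is a product of the simple reflections $s_{k_2+1},\dots,s_{q-1}$, whose indices avoid both $k_1$ and $k_2$, so $(k_2+1,q)\in W_{J}$ and therefore $\mcr{\wend(w,\iota(A))}=\mcr{\wend(w,A)}$. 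Hence $A$ and $\iota(A)$ contribute $(-1)^{|A|}Q_{k_1}Q_{k_2}[\cO^{\mcr{\wend(w,A)}}]$ and $(-1)^{|A|\pm1}Q_{k_1}Q_{k_2}[\cO^{\mcr{\wend(w,A)}}]$, which cancel; grouping the sum over $\A_{l}(w,\Gamma(k_1))$ into the $\iota$-orbits $\{A,\iota(A)\}$ gives $0$, as claimed. I expect the main obstacle to be the well‑definedness step of the second paragraph — confirming via the quantum Bruhat graph criteria of Section~\ref{sec:qbg} that toggling $(k_1,q)$ neither destroys admissibility of the directed path nor changes which step is the quantum one — everything else being routine bookkeeping.
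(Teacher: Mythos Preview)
Your proposal is correct and follows exactly the paper's approach: you use the same sign-reversing involution $\iota$ (toggling the root $(k_1,q)$, with $q$ minimal in $\{k_2+1,\dots,n\}$ satisfying $w(k_1)<w(q)$) and verify the same two properties $|\iota(A)|=|A|\pm1$ and $\wend(w,\iota(A))=\wend(w,A)(k_2+1,q)$, whence $\mcr{\wend(w,\iota(A))}=\mcr{\wend(w,A)}$. One small imprecision: your claim that ``$A$ contains at most one Bruhat-cover root of the form $(k_1,\cdot)$'' is slightly too strong in the two-step setting---what is true (and sufficient for your argument) is that $A$ contains at most one Bruhat-cover root $(k_1,j)$ with $j>k_2$, which guarantees that nothing lies strictly between $(k_1,q)$ and $(k_1,k_2+1)$ in $A$.
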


\begin{proof}[Proof of Theorem~\ref{thm:2-step_3}\,(1)]
By Lemma~\ref{lem:2-step_cancellation_3-1}, we deduce that 
\begin{equation}
\begin{split}
[\cO(-\varpi_{k_{1}})] \cdot [\cO^{w}] &= \be^{w\varpi_{k_{1}}} \sum_{A \in \A_{\lessdot}(w, \Gamma(k_{1}))} (-1)^{|A|} [\cO^{\wend(w, A)}] \\ 
& \quad + \be^{w\varpi_{k_{1}}} \sum_{A \in \A_{1}(w, \Gamma(k_{1}))} (-1)^{|A|} Q_{k_{1}} [\cO^{\mcr{\wend(w, A)}}]\,. 
\end{split}
\end{equation}

If $w(1) < w(k_{1}+1)$, then 
\begin{equation}
\be^{w\varpi_{k_{1}}} \sum_{A \in \A_{1}(w, \Gamma(k_{1}))} (-1)^{|A|} Q_{k_{1}} [\cO^{\mcr{\wend(w, A)}}] = 0\, 
\end{equation}
by Remark~\ref{rem:2-step_cancellation_A1}. Therefore, Theorem~\ref{thm:2-step_3}\,(1)\,(a) follows from Lemma~\ref{lem:2-step_cancellation-free_Bruhat}. 

Assume now that $w(1) > w(k_{1}+1)$. Note that $w(k_{1}) > w(k_{2})$ by condition (Q). Hence Remark~\ref{rem:2-step_cancellation-free_A1} implies that the sum 
\begin{equation}
\be^{w\varpi_{k_{1}}} \sum_{A \in \A_{1}(w, \Gamma(k_{1}))} (-1)^{|A|} Q_{k_{1}} [\cO^{\mcr{\wend(w, A)}}] 
\end{equation}
is cancellation-free. Therefore, Theorem~\ref{thm:2-step_3}\,(1)\,(b) follows from Lemma~\ref{lem:2-step_cancellation-free_Bruhat} and Remark~\ref{rem:2-step_deformation_A1}. 
\end{proof}

Next, assume that $w(k_{1}) > w(n)$. We consider the following auxiliary condition: 
\begin{itemize}
\item[(Q-A)] there exists $1 \leq l \leq k_{1}$ such that $w(k_{2}+1) < w(l) < w(k_{1}+1)$. 
\end{itemize}

Assume condition (Q-A), and that $w(1) < w(k_{2}+1)$. We take the maximal $1 \leq p_{\A_{2}} \leq k_{1}$ such that $w(p_{\A_{2}}) < w(k_{2}+1)$. Then, we can define an involution $\iota_{\A_{2}}$ on $\A_{2}(w, \Gamma(k_{1}))$ as follows: set 
\begin{align}
\A_{2}^{1}(w, \Gamma(k_{1})) &:= \{ A \in \A_{2}(w, \Gamma(k_{2})) \mid (p_{\A_{2}}, k_{2}+1) \in A \}\,, \\ 
\A_{2}^{2}(w, \Gamma(k_{1})) &:= \{ A \in \A_{2}(w, \Gamma(k_{2})) \mid (p_{\A_{2}}, k_{2}+1) \notin A \}\,, 
\end{align}
and define $\iota_{\A_{2}}$ by 
\begin{align}
A \in \A_{2}^{2}(w, \Gamma(k_{1})) \ &\mapsto \ \iota_{\A_{2}}(A) := A \sqcup \{(p_{\A_{2}}, k_{2}+1)\} \in \A_{2}^{1}(w, \Gamma(k_{1}))\,, \\ 
A \in \A_{2}^{1}(w, \Gamma(k_{1})) \ &\mapsto \ \iota_{\A_{2}}(A) := A \setminus \{(p_{\A_{2}}, k_{2}+1)\} \in \A_{2}^{2}(w, \Gamma(k_{1}))\,. 
\end{align}
\begin{rema}
If condition (Q-A) does not hold, then the above $\iota_{\A_{2}}: \A_{2}^{1}(w, \Gamma(k_{1})) \rightarrow \A_{2}^{2}(w, \Gamma(k_{1}))$ is not well-defined; we will explain this situation later. 
\end{rema}

This $\iota_{\A_{2}}$ has the following properties: 
\begin{itemize}
\item $\wend(w, \iota_{\A_{2}}(A)) = \wend(w, A)(p_{\A_{2}}, k_{1})$ (and hence $\mcr{\wend(w, \iota_{\A_{2}}(A))} = \mcr{\wend(w, A)}$); 
\item $|\iota_{\A_{2}}(A)| = |A| \pm 1$. 
\end{itemize}
By using the involution $\iota_{\A_{2}}$, we obtain the following. 

\begin{lem} \label{lem:2-step_cancellation_3-2-1-A2}
Assume condition \textup{(Q)}. If $w(1) < w(k_{2}+1)$ and condition \textup{(Q-A)} hold, then 
\begin{equation}
\be^{w\varpi_{k_{1}}} \sum_{A \in \A_{2}(w, \Gamma(k_{1}))} (-1)^{|A|} Q_{k_{1}}Q_{k_{2}} [\cO^{\mcr{\wend(w, A)}}] = 0\,. 
\end{equation}
\end{lem}

Also, we take the maximal $1 \leq p_{\A_{3}} \leq k_{1}$ such that $w(p_{\A_{3}}) < w(k_{1}+1)$. We can define an involution $\iota_{\A_{3}}$ on $\A_{3}(w, \Gamma(k_{1}))$ as follows: set 
\begin{align}
\A_{3}^{1}(w, \Gamma(k_{1})) &:= \{ A \in \A_{3}(w, \Gamma(k_{1})) \mid (p_{\A_{3}}, k_{1}+1) \in A \}\,, \\ 
\A_{3}^{2}(w, \Gamma(k_{1})) &:= \{ A \in \A_{3}(w, \Gamma(k_{1})) \mid (p_{\A_{3}}, k_{1}+1) \notin A \}\,, 
\end{align}
and define $\iota_{\A_{3}}$ by 
\begin{align}
A \in \A_{3}^{2}(w, \Gamma(k_{1})) \ &\mapsto \ \iota_{\A_{3}}(A) := A \sqcup \{(p_{\A_{3}}, k_{1}+1)\} \in \A_{3}^{1}(w, \Gamma(k_{1}))\,, \\ 
A \in \A_{3}^{1}(w, \Gamma(k_{1})) \ &\mapsto \ \iota_{\A_{3}}(A) := A \setminus \{(p_{\A_{3}}, k_{1}+1)\} \in \A_{3}^{2}(w, \Gamma(k_{1}))\,. 
\end{align}

\begin{rema}
If condition (Q-A) does not hold, then the above $\iota_{\A_{3}}: \A_{3}^{2}(w, \Gamma(k_{1})) \rightarrow \A_{3}^{1}(w, \Gamma(k_{1}))$ is not well-defined for the same reason as $\iota_{\A_{2}}$. 
\end{rema}

This $\iota_{\A_{3}}$ has the following properties: 
\begin{itemize}
\item $\wend(w, \iota_{\A_{3}}(A)) = \wend(w, A)(p_{\A_{3}}, k_{1})$ (and hence $\mcr{\wend(w, \iota_{\A_{3}}(A))} = \mcr{\wend(w, A)}$); 
\item $|\iota_{\A_{3}}(A)| = |A| \pm 1$. 
\end{itemize}
By using the involution $\iota_{\A_{3}}$, we obtain the following. 

\begin{lem} \label{lem:2-step_cancellation_3-2-1-A3}
Assume condition \textup{(Q)}. If $w(1) < w(k_{2}+1)$ and condition \textup{(Q-A)} holds, then 
\begin{equation}
\be^{w\varpi_{k_{1}}} \sum_{A \in \A_{3}(w, \Gamma(k_{1}))} (-1)^{|A|} Q_{k_{1}}Q_{k_{2}} [\cO^{\mcr{\wend(w, A)}}] = 0\,. 
\end{equation}
\end{lem}

Next, assume that condition (Q-A) does not hold, but assume that $w(1) < w(k_{2}+1)$. Take the maximal $1 \leq p \leq k_{1}$ such that $w(p) < w(k_{2}+1)$. Set 
\begin{align}
\A_{2}'(w, \Gamma(k_{1})) &:= \{ A \in \A_{2}(w, \Gamma(k_{1})) \mid (p, k_{1}+1) \in A \}\,, \\ 
\A_{2}'^{C}(w, \Gamma(k_{1})) &:= \A_{2}(w, \Gamma(k_{1})) \setminus \A_{2}'(w, \Gamma(k_{1}))\,. 
\end{align}
Observe that if $A \in \A_{2}'(w, \Gamma(k_{1}))$, then we must have $(p, k_{2}+1) \in A$; if not, then $A$ cannot contain a quantum step $(k_{1}, k_{2}+1)$, which contradicts the definition of $\A_{2}(w, \Gamma(k_{1}))$. Thus, the above $\iota_{\A_{2}}: \A_{2}^{1}(w, \Gamma(k_{1})) \rightarrow \A_{2}^{2}(w, \Gamma(k_{1}))$ is not well-defined. Hence we need another involution. 

In fact, we can define an involution on $\A_{2}'^{C}(w, \Gamma(k_{1}))$ similar to $\iota_{\A_{2}}$ as follows. We set 
\begin{align}
\A_{2}'^{C, 1}(w, \Gamma(k_{1})) &:= \{ A \in \A_{2}'^{C}(w, \Gamma(k_{1})) \mid (p, k_{2}+1) \in A \}\,, \\ 
\A_{2}'^{C, 2}(w, \Gamma(k_{1})) &:= \{ A \in \A_{2}'^{C}(w, \Gamma(k_{1})) \mid (p, k_{2}+1) \notin A \}\,. 
\end{align}
Then we can define an involution $\iota$ on $\A_{2}'^{C}(w, \Gamma(k_{1}))$ by 
\begin{align}
A \in \A_{2}'^{C, 2}(w, \Gamma(k_{1})) \ &\mapsto \ \iota(A) := A \sqcup \{(p, k_{2}+1)\} \in \A_{2}'^{C, 1}(w, \Gamma(k_{1}))\,, \\ 
A \in \A_{2}'^{C, 1}(w, \Gamma(k_{1})) \ &\mapsto \ \iota(A) := A \setminus \{(p, k_{2}+1)\} \in \A_{2}'^{C, 2}(w, \Gamma(k_{1}))\,. 
\end{align}
This $\iota$ has the following properties:
\begin{itemize}
\item $\wend(w, \iota(A)) = \wend(w, A)(p, k_{1})$ (and hence $\mcr{\wend(w, \iota(A))} = \mcr{\wend(w, A)}$); 
\item $|\iota(A)| = |A| \pm 1$. 
\end{itemize}
By using the involution $\iota$, we obtain the following. 

\begin{lem} \label{lem:2-step_cancellation_3-2-1-A2'C}
Assume condition \textup{(Q)}. If $w(1) < w(k_{2}+1)$, and if condition \textup{(Q-A)} does not hold, then 
\begin{equation}
\be^{w\varpi_{k_{1}}} \sum_{A \in \A_{2}'^{C}(w, \Gamma(k_{1}))} (-1)^{|A|} Q_{k_{1}}Q_{k_{2}} [\cO^{\mcr{\wend(w, A)}}] = 0\,. 
\end{equation}
\end{lem}

Similarly, we set 
\begin{align}
\A_{3}'(w, \Gamma(k_{1})) &:= \{ A \in \A_{3}(w, \Gamma(k_{1})) \mid (p, k_{2}+1) \notin A \}\,, \\ 
\A_{3}'^{C}(w, \Gamma(k_{1})) &:= \A_{3}(w, \Gamma(k_{1})) \setminus \A_{3}'(w, \Gamma(k_{1}))\,. 
\end{align}
Observe that if $A \in \A_{3}'(w, \Gamma(k_{1}))$, then we must have $(p, k_{1}+1) \notin A$; if not, then $A$ cannot contain a quantum step $(k_{1}, k_{2}+1)$. 
However, we can define an involution on $\A_{3}'(w, \Gamma(k_{1}))$ similar to $\iota_{\A_{3}}$ as follows. 
We set 
\begin{align}
\A_{3}'^{C, 1}(w, \Gamma(k_{1})) &:= \{ A \in \A_{3}'^{C}(w, \Gamma(k_{1})) \mid (p, k_{1}+1) \in A \}\,, \\ 
\A_{3}'^{C, 2}(w, \Gamma(k_{1})) &:= \{ A \in \A_{3}'^{C}(w, \Gamma(k_{1})) \mid (p, k_{1}+1) \notin A \}\,. 
\end{align}
Then we can define an involution $\iota$ on $\A_{3}'^{C}(w, \Gamma(k_{1}))$ by 
\begin{align}
A \in \A_{3}'^{C, 2}(w, \Gamma(k_{1})) \ &\mapsto \ \iota(A) := A \sqcup \{(p, k_{1}+1)\} \in \A_{3}'^{C, 1}(w, \Gamma(k_{1}))\,, \\ 
A \in \A_{3}'^{C, 1}(w, \Gamma(k_{1})) \ &\mapsto \ \iota(A) := A \setminus \{(p, k_{1}+1)\} \in \A_{3}'^{C, 2}(w, \Gamma(k_{1}))\,. 
\end{align}
This $\iota$ has the following properties: 
\begin{itemize}
\item $\wend(w, \iota(A)) = \wend(w, A)(p, k_{1})$ (and hence $\mcr{\wend(w, \iota(A))} = \mcr{\wend(w, A)}$); 
\item $|\iota(A)| = |A| \pm 1$. 
\end{itemize}
By using the involution $\iota$, we obtain the following. 

\begin{lem} \label{lem:2-step_cancellation_3-2-1-A3'C}
Assume condition \textup{(Q)}. If $w(1) < w(k_{2}+1)$, and if condition \textup{(Q-A)} does not hold, then 
\begin{equation}
\be^{w\varpi_{k_{1}}} \sum_{A \in \A_{3}'^{C}(w, \Gamma(k_{1}))} (-1)^{|A|} Q_{k_{1}} Q_{k_{2}} [\cO^{\mcr{\wend(w, A)}}] = 0\,. 
\end{equation}
\end{lem}

It remains to examine cancellations for the set 
\begin{equation}
\A_{23}'(w, \Gamma(k_{1})) := \A_{2}'(w, \Gamma(k_{1})) \sqcup \A_{3}'(w, \Gamma(k_{1}))\,. 
\end{equation}
The desired involution on $\A_{23}'(w, \Gamma(k_{1}))$ is given as follows: 
\begin{align}
A \in \A_{2}'(w, \Gamma(k_{1})) \ &\mapsto \ \iota(A) := (A \setminus \{(p, k_{2}+1), (p, k_{1}+1)\}) \sqcup \{(k_{1}, k_{1}+1)\} \in \A_{3}'(w, \Gamma(k_{1}))\,, \\ 
A \in \A_{3}'(w, \Gamma(k_{1})) \ &\mapsto \ \iota(A) := (A \setminus \{(k_{1}, k_{1}+1)\}) \sqcup \{(p, k_{2}+1), (p, k_{1}+1)\} \in \A_{2}'(w, \Gamma(k_{1}))\,. 
\end{align}
This $\iota$ has the following properties: 
\begin{itemize}
\item $\wend(w, \iota(A)) = \wend(w, A)(p, k_{1})$ (and hence $\mcr{\wend(w, \iota(A))} = \mcr{\wend(w, A)}$); 
\item $|\iota(A)| = |A| \pm 1$. 
\end{itemize}
By using the involution $\iota$, we obtain the following. 
\begin{lem} \label{lem:2-step_cancellation_3-2-1-A23'}
Assume condition \textup{(Q)}. If $w(1) < w(k_{2}+1)$, and if condition \textup{(Q-A)} does not hold, then 
\begin{equation}
\be^{w\varpi_{k_{1}}} \sum_{A \in \A_{2}'(w, \Gamma(k_{1}))} (-1)^{|A|} Q_{k_{1}}Q_{k_{2}} [\cO^{\mcr{\wend(w, A)}}] + \be^{w\varpi_{k_{1}}} \sum_{A \in \A_{3}'(w, \Gamma(k_{1}))} (-1)^{|A|} Q_{k_{1}}Q_{k_{2}} [\cO^{\mcr{\wend(w, A)}}] = 0\,. 
\end{equation}
\end{lem}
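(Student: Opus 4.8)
The plan is to reduce the statement to the combinatorics of the map $\iota$ on $\A_{23}'(w, \Gamma(k_{1})) = \A_{2}'(w, \Gamma(k_{1})) \sqcup \A_{3}'(w, \Gamma(k_{1}))$ defined just above, by showing that $\iota$ is a well-defined, fixed-point-free involution that interchanges $\A_{2}'(w, \Gamma(k_{1}))$ and $\A_{3}'(w, \Gamma(k_{1}))$ and has the two listed properties, $\wend(w, \iota(A)) = \wend(w, A)(p, k_{1})$ and $|\iota(A)| = |A| \pm 1$. Granting this, the lemma is immediate: since condition (Q) holds, Remark~\ref{rem:k1>k1+1} gives $w(k_{1}) > w(k_{2}+1)$, so $p < k_{1}$, and hence right multiplication by the transposition $(p, k_{1})$ permutes only the positions $1, \dots, k_{1}$ among themselves; therefore $\mcr{\wend(w, \iota(A))} = \mcr{\wend(w, A)}$, while $(-1)^{|\iota(A)|} = -(-1)^{|A|}$. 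As the factor $\be^{w\varpi_{k_{1}}}$ and the monomial $Q_{k_{1}}Q_{k_{2}}$ are common to all the summands, the contributions of $A$ and $\iota(A)$ cancel in pairs, and the sum over $\A_{23}'(w, \Gamma(k_{1}))$ vanishes.

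First I would check that $\iota$ is well-defined. Let $A \in \A_{2}'(w, \Gamma(k_{1}))$. Then $(p, k_{1}+1) \in A$ by definition of $\A_{2}'(w, \Gamma(k_{1}))$, the path $\Pi(w, A)$ is of the form (2) in Lemma~\ref{lem:containing_quantum_steps} with final (quantum) step labeled $(k_{1}, k_{2}+1)$, and $(p, k_{2}+1) \in A$ by the observation recorded in the discussion preceding Lemma~\ref{lem:2-step_cancellation_3-2-1-A2'C}. Using the type-$A$ quantum Bruhat graph criterion (Proposition~\ref{prop:quantum_bruhat_order_type_A}) together with the standing hypotheses -- condition (Q), $w(k_{1}) > w(n)$, $w(1) < w(k_{2}+1)$, and the failure of condition (Q-A) -- one first pins down the relative order of the relevant entries of $w$, namely $w(1) < \cdots < w(p) < w(k_{2}+1) < w(k_{1}+1) < w(p+1) < \cdots < w(k_{1})$ (the last string of inequalities being forced precisely by the negation of (Q-A) and by (Q)), and thereby controls the values of the states along $\Pi(w, A)$ at positions $p, k_{1}, k_{1}+1, k_{2}+1$. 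A direct verification with Proposition~\ref{prop:quantum_bruhat_order_type_A} then shows that removing $(p, k_{2}+1)$ and $(p, k_{1}+1)$ from $A$ and inserting $(k_{1}, k_{1}+1)$ right after the quantum step $(k_{1}, k_{2}+1)$ produces a genuine directed path in $\QB(W)$ whose last step is a Bruhat cover; i.e. $\iota(A) \in \A_{3}(w, \Gamma(k_{1}))$, and it lies in $\A_{3}'(w, \Gamma(k_{1}))$ since $(p, k_{2}+1) \notin \iota(A)$. The reverse inclusion $\iota(\A_{3}'(w, \Gamma(k_{1}))) \subseteq \A_{2}'(w, \Gamma(k_{1}))$ is proved in the same way, now using that $(p, k_{1}+1) \notin A$ for $A \in \A_{3}'(w, \Gamma(k_{1}))$ (the observation preceding Lemma~\ref{lem:2-step_cancellation_3-2-1-A3'C}); and $\iota \circ \iota = \mathrm{id}$ is clear from the defining formulas, so $\iota$ has no fixed points.

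Next I would establish the two numerical properties. The cardinality claim is immediate: $|\iota(A)| = |A| - 1$ for $A \in \A_{2}'(w, \Gamma(k_{1}))$ and $|\iota(A)| = |A| + 1$ for $A \in \A_{3}'(w, \Gamma(k_{1}))$. For $\wend(w, \iota(A)) = \wend(w, A)(p, k_{1})$, the structural description above guarantees that every root of $A$ other than $(p, k_{2}+1)$, $(p, k_{1}+1)$, $(k_{1}, k_{2}+1)$ (together with $(k_{1}, k_{1}+1)$ in the $\A_{3}'$ case) fixes each of the positions $p, k_{1}, k_{1}+1, k_{2}+1$; hence the identity reduces to multiplying out the short products $(p, k_{2}+1)(p, k_{1}+1)(k_{1}, k_{2}+1)$ and $(k_{1}, k_{2}+1)(k_{1}, k_{1}+1)$ as permutations of these four positions and reading off that they differ exactly by the transposition $(p, k_{1})$.

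The hard part is the structural input used in the last two paragraphs: that, under the hypotheses of the lemma, the roots $(p, k_{2}+1)$ and $(p, k_{1}+1)$ occur in $A$ with no other root of $A$ lying between them in $\Gamma(k_{1})$, and that no root of $A$ other than the four listed ones disturbs the positions $p, k_{1}, k_{1}+1, k_{2}+1$ along $\Pi(w, A)$. This is exactly where the failure of condition (Q-A) is indispensable -- it forces $w(k_{1}+1) < w(p+1) < \cdots < w(k_{1})$ and thereby rigidifies the portion of $\Pi(w, A)$ acting near position $k_{1}$ -- and it requires a careful case analysis with the criterion of Proposition~\ref{prop:quantum_bruhat_order_type_A}. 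Once that is in place, the remainder is bookkeeping with products of transpositions.
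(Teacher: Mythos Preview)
Your plan is exactly the paper's approach: the paper defines the involution $\iota$ on $\A_{23}'(w,\Gamma(k_1))$, records the two properties $\wend(w,\iota(A))=\wend(w,A)(p,k_1)$ and $|\iota(A)|=|A|\pm 1$, and immediately concludes. Your proposal supplies the verification the paper omits, and your reduction of the $\wend$ identity to the product computation $(p,k_2+1)(p,k_1+1)(k_1,k_2+1)$ versus $(k_1,k_2+1)(k_1,k_1+1)$ is the right endgame.

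One point deserves sharpening. Your structural assertion that ``no root of $A$ other than the four listed ones disturbs the positions $p, k_1, k_1+1, k_2+1$'' is stronger than what actually holds: roots $(i,k_1+1)$ or $(i,k_2+1)$ with $i<p$ from earlier rows, and roots $(k_1,j)$ with $j>k_2+1$ from the beginning of row $k_1$, can certainly sit in $A$ and do touch these positions. What you really need is more local: (i) no root $(p,j)$ with $k_1+2\le j\le k_2$ lies between $(p,k_2+1)$ and $(p,k_1+1)$ in $A$, and (ii) the roots strictly between $(p,k_1+1)$ and $(k_1,k_2+1)$ in $A$ commute with $(p,k_2+1)(p,k_1+1)$ in the required way (equivalently, they avoid position $p$ and their effect on positions $k_1+1$, $k_2+1$ is the same in both $A$ and $\iota(A)$). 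These refined claims are what the failure of (Q-A), together with the fact that Bruhat steps preserve $W^J$ (Lemma~\ref{lem:quantum_edge_2-step}), actually buy you; once stated precisely, the case analysis you allude to goes through, and admissibility of $\iota(A)$ follows alongside the $\wend$ identity.
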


\begin{proof}[Proof of Theorem~\ref{thm:2-step_3}\,(2)\,(a)]
By Lemmas~\ref{lem:2-step_cancellation_3-2-1-A2}, \ref{lem:2-step_cancellation_3-2-1-A3}, \ref{lem:2-step_cancellation_3-2-1-A2'C}, \ref{lem:2-step_cancellation_3-2-1-A3'C}, and \ref{lem:2-step_cancellation_3-2-1-A23'}, we have 
\begin{equation}
\be^{w\varpi_{k_{1}}} \sum_{A \in \A_{2}(w, \Gamma(k_{1}))} (-1)^{|A|} Q_{k_{1}}Q_{k_{2}} [\cO^{\mcr{\wend(w, A)}}] + \be^{w\varpi_{k_{1}}} \sum_{A \in \A_{3}(w, \Gamma(k_{1}))} (-1)^{|A|} Q_{k_{1}}Q_{k_{2}} [\cO^{\mcr{\wend(w, A)}}] = 0\,. 
\end{equation}
Also, since $w(1) < w(k_{2}+1) < w(k_{1}+1)$, Remark~\ref{rem:2-step_cancellation_A1} implies that 
\begin{equation}
\be^{w\varpi_{k_{1}}} \sum_{A \in \A_{1}(w, \Gamma(k_{1}))} (-1)^{|A|} Q_{k_{1}} [\cO^{\mcr{\wend(w, A)}}] = 0\,. 
\end{equation}
These observations, together with Lemma~\ref{lem:2-step_cancellation-free_Bruhat}, prove the desired cancellation-free identity. 
\end{proof}

\begin{rema}
In the proof of Theorem~\ref{thm:2-step_3}\,(2)\,(a), we do not use the assumption that $w(k_{1}) > w(n)$. 
\end{rema}

Now, we assume that $w(k_{1}) > w(n)$ and $w(k_{2}+1) < w(1) < w(k_{1}+1)$, which are the assumptions of Theorem~\ref{thm:2-step_3}\,(2)\,(b); 
note that $w(1) < w(k_{1}+1) < w(k_{1})$ by condition (Q), and hence $k_{1} \not= 1$. 
In this case, the same proof as that of Lemma~\ref{lem:2-step_cancellation_3-2-1-A3} yields the following. 

\begin{lem} \label{lem:2-step_cancellation_3-2-2-A3}
Assume condition \textup{(Q)}. If $w(k_{2}+1) < w(1) < w(k_{1}+1)$, then 
\begin{equation}
\be^{w\varpi_{k_{1}}} \sum_{A \in \A_{3}(w, \Gamma(k_{1}))} (-1)^{|A|} Q_{k_{1}} Q_{k_{2}} [\cO^{\mcr{\wend(w, A)}}] = 0\,. 
\end{equation}
\end{lem}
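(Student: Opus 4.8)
The plan is to reuse, essentially verbatim, the argument that proved Lemma~\ref{lem:2-step_cancellation_3-2-1-A3}: exhibit a sign-reversing involution on $\A_{3}(w, \Gamma(k_{1}))$ that preserves $\mcr{\wend(w, \cdot)}$ while changing the cardinality by one. The point is that the present hypothesis $w(k_{2}+1) < w(1) < w(k_{1}+1)$ is exactly strong enough to make that involution well defined: it forces condition~(Q-A) (take $l = 1$), which is the ingredient on which the proof of Lemma~\ref{lem:2-step_cancellation_3-2-1-A3} rests.

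First I would fix the pivot position: let $p$ be the maximal index $1 \leq p \leq k_{1}$ with $w(p) < w(k_{1}+1)$. Such a $p$ exists because $w(1) < w(k_{1}+1)$, and $p \leq k_{1}-1$ because $w(k_{1}) > w(k_{1}+1)$ by condition~(Q) (cf.\ Remark~\ref{rem:k1>k1+1}); in particular $k_{1} \geq 2$. By maximality of $p$, the increasing sequence of values $w(p+1), \ldots, w(k_{1})$ all exceed $w(k_{1}+1)$. Note that $(p, k_{1}+1)$ is the last root of the $p$-th row of $\Gamma(k_{1})$ in the display~\eqref{omegakchain}, so it occurs strictly before the row $k_{1}$ that carries the quantum steps $(k_{1}, k_{2}+1)$ and $(k_{1}, k_{1}+1)$ appearing in the paths of type~(3) in Lemma~\ref{lem:containing_quantum_steps}.

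Next I would define $\iota$ on $\A_{3}(w, \Gamma(k_{1}))$ by toggling the root $(p, k_{1}+1)$: set $\iota(A) := A \sqcup \{(p, k_{1}+1)\}$ if $(p, k_{1}+1) \notin A$, and $\iota(A) := A \setminus \{(p, k_{1}+1)\}$ if $(p, k_{1}+1) \in A$. The verification that $\iota(A) \in \A_{3}(w, \Gamma(k_{1}))$ is the same local computation as in the proof of Lemma~\ref{lem:2-step_cancellation_3-2-1-A3}: using Proposition~\ref{prop:quantum_bruhat_order_type_A}, the maximality of $p$, and the fact that $w[1, k_{1}]$ is increasing since $w \in W^{J}$, the step labeled $(p, k_{1}+1)$ inserted at its place in $\Pi(w, A)$ is a Bruhat cover that straddles nothing, and it leaves the quantum portion of $\Pi(w, A)$ — hence membership in $\A_{3}(w, \Gamma(k_{1}))$ — unchanged (cf.\ also the description~\eqref{eq:A3}). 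As in that lemma, one then checks $\wend(w, \iota(A)) = \wend(w, A)(p, k_{1})$, so that $\mcr{\wend(w, \iota(A))} = \mcr{\wend(w, A)}$ because $(p, k_{1}) \in W_{J}$, while $|\iota(A)| = |A| \pm 1$.

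Since $\iota$ is an involution pairing subsets whose cardinalities differ by $1$ and whose values of $\mcr{\wend(w, \cdot)}$ agree, the contributions of $A$ and $\iota(A)$ to $\be^{w\varpi_{k_{1}}} \sum_{A \in \A_{3}(w, \Gamma(k_{1}))} (-1)^{|A|} Q_{k_{1}}Q_{k_{2}} [\cO^{\mcr{\wend(w, A)}}]$ cancel, and the sum vanishes. I expect the only non-routine point to be the well-definedness of $\iota$, i.e.\ that toggling $(p,k_{1}+1)$ keeps us inside $\A_{3}(w, \Gamma(k_{1}))$; but this is precisely the analogous step in Lemma~\ref{lem:2-step_cancellation_3-2-1-A3}, and no new difficulty arises, the role of the hypothesis $w(k_{2}+1) < w(1) < w(k_{1}+1)$ being merely to guarantee the pivot $p \leq k_{1}-1$ (equivalently, to ensure~(Q-A)).
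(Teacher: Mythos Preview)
Your proposal is correct and is exactly the paper's approach: the paper's proof simply reads ``the same proof as that of Lemma~\ref{lem:2-step_cancellation_3-2-1-A3} yields the following,'' and you have correctly spelled this out, including the key observation that the hypothesis $w(k_{2}+1) < w(1) < w(k_{1}+1)$ forces condition~(Q-A) (with $l=1$), so that the involution $\iota_{\A_{3}}$ toggling $(p_{\A_{3}}, k_{1}+1)$ is well defined. One minor imprecision in your final sentence: the role of the hypothesis is not merely to guarantee $p \le k_{1}-1$, but specifically to ensure $w(p_{\A_{3}}) > w(k_{2}+1)$ (this is what (Q-A) buys and what keeps the quantum step $(k_{1}, k_{2}+1)$ available after the toggle); this does not affect the correctness of your argument.
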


\begin{rema}
We do not need the assumption that $w(k_{1}) > w(n)$ for Lemma~\ref{lem:2-step_cancellation_3-2-2-A3}. 
\end{rema}

In contrast, the sum over $\A_{2}(w, \Gamma(k_{1}))$ is cancellation-free. Indeed, let $A \in \A_{2}(w, \Gamma(k_{1}))$, and set $y := \wend(w, A \setminus \{(k_{1}, k_{2}+1)\})$. Note that $A \setminus \{(k_{1}, k_{2}+1)\}$ contains only Bruhat steps. Hence we see that $y(k_{2}+1) < y(1)$ and $y(n) < y(k_{1})$. Therefore, if we set $z := y(k_{1}, k_{2}+1) = \wend(w, A)$, then 
\begin{itemize}
\item $z(k_{1}) < z(1) < z(2) < \cdots < z(k_{1}-1)$, 
\item $z(k_{1}+1) < z(k_{1}+2) < \cdots < z(k_{2})$, and 
\item $z(k_{2}+2) < z(k_{2}+3) < \cdots < z(n) < z(k_{2}+1)$; 
\end{itemize}
hence, if we take cyclic permutations $\sigma_{1} := (1, k_{1}, k_{1}-1, \ldots , 2)$ and $\sigma_{2} := (k_{2}+1, k_{2}+2, \ldots, n)$
(if $k_{2}+1 = n$, then we take $\sigma_{2} := e$), 
then we have $\mcr{\wend(w, A)} = \wend(w, A)\sigma_{1}\sigma_{2}$. 
Note that these $\sigma_{1}$ and $\sigma_{2}$ does not depends on the choice of $A$. Thus, for $A, B \in \A_{2}(w, \Gamma(k_{1}))$ with $A \not= B$, it follows that 
\begin{equation}
\mcr{\wend(w, A)} = \wend(w, A)\sigma_{1}\sigma_{2} \not= \wend(w, B)\sigma_{1}\sigma_{2} = \mcr{\wend(w, B)}\,. 
\end{equation}
This, together with Remark~\ref{rem:Chevalley_cancellation-free}, proves the following. 

\begin{lem} \label{lem:2-step_cancellation-free_3-2-2-A2}
Assume condition \textup{(Q)}. If $w(k_{1}) > w(n)$ and $w(k_{2}+1) < w(1) < w(k_{1}+1)$, then the sum 
\begin{equation}
\be^{w\varpi_{k_{1}}} \sum_{A \in \A_{2}(w, \Gamma(k_{1}))} (-1)^{|A|} Q_{k_{1}}Q_{k_{2}} [\cO^{\mcr{\wend(w, A)}}] 
\end{equation}
is cancellation-free. 
\end{lem}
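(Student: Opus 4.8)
The plan is to make the set $\A_2(w,\Gamma(k_1))$ completely explicit and then check that passing from $\wend(w,A)$ to $\mcr{\wend(w,A)}$ neither fuses terms of opposite sign nor creates cancellation. By \eqref{eq:A2} (valid since condition (Q) holds), every $A\in\A_2(w,\Gamma(k_1))$ is of the form $A=A_0\sqcup\{(k_1,k_2+1)\}$ with $A_0\in\A_\lessdot(w,\Gamma(k_1))$; I would write $y:=\wend(w,A_0)$ and $z:=\wend(w,A)=y(k_1,k_2+1)$, so that $z$ agrees with $y$ outside positions $k_1$ and $k_2+1$, where $z(k_1)=y(k_2+1)$ and $z(k_2+1)=y(k_1)$. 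Since $\Pi(w,A_0)$ consists of Bruhat steps only, each labeled by a root of $\Gamma(k_1)$ (hence of the form $(i,j)$ with $i\le k_1<j$), Lemma~\ref{lem:quantum_edge_2-step}\,(1) shows $y\in W^J$; in particular $y[1,k_1]$, $y[k_1+1,k_2]$, $y[k_2+1,n]$ are increasing.

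The heart of the matter is the pair of inequalities $y(k_2+1)<y(1)$ and $y(n)<y(k_1)$. For these, observe that each Bruhat step of $\Pi(w,A_0)$, being a cover of the form $v\mapsto v(i,j)$ with $i\le k_1<j$, forces $v(i)<v(j)$, so it moves the larger value $v(j)$ into the position $i\le k_1$ and the smaller value $v(i)$ into the position $j>k_1$. Consequently, along $\Pi(w,A_0)$ the value occupying any fixed position $\le k_1$ is weakly increasing, and that occupying any fixed position $>k_1$ is weakly decreasing; hence $y(1)\ge w(1)$, $y(k_1)\ge w(k_1)$, $y(k_2+1)\le w(k_2+1)$, and $y(n)\le w(n)$. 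The hypotheses $w(k_2+1)<w(1)$ and $w(n)<w(k_1)$ then yield $y(k_2+1)<y(1)$ and $y(n)<y(k_1)$.

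Combining these with $y\in W^J$ gives the three block-wise chains $z(k_1)<z(1)<\cdots<z(k_1-1)$, $z(k_1+1)<\cdots<z(k_2)$, and $z(k_2+2)<\cdots<z(n)<z(k_2+1)$. Reading off the minimal coset representative (which sorts each of the three blocks increasingly), these chains show that $\mcr{z}$ is obtained from $z$ by right multiplication by the fixed cyclic permutations $\sigma_1=(1\ k_1\ k_1-1\ \cdots\ 2)$ --- note $k_1\ne 1$ here, since $w(1)<w(k_1+1)<w(k_1)$ --- and $\sigma_2=(k_2+1\ k_2+2\ \cdots\ n)$ (with $\sigma_2:=e$ when $k_2+1=n$). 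Crucially, neither $\sigma_1$ nor $\sigma_2$ depends on $A$, so $\mcr{\wend(w,A)}=\wend(w,A)\,\sigma_1\sigma_2$ for every $A\in\A_2(w,\Gamma(k_1))$.

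To finish, suppose $A,B\in\A_2(w,\Gamma(k_1))$ contribute to the same Schubert class, i.e. $\mcr{\wend(w,A)}=\mcr{\wend(w,B)}$. Cancelling $\sigma_1\sigma_2$ on the right gives $\wend(w,A)=\wend(w,B)$; writing $A=A_0\sqcup\{(k_1,k_2+1)\}$ and $B=B_0\sqcup\{(k_1,k_2+1)\}$ this forces $\wend(w,A_0)=\wend(w,B_0)$, and since $A_0,B_0$ are saturated Bruhat chains in $\QB(W)$ from $w$ to this common element, $|A_0|=|B_0|$ and hence $|A|=|B|$; so the two contributions carry the same sign $(-1)^{|A|}$ and cannot cancel (and, invoking Remark~\ref{rem:Chevalley_cancellation-free}, one even gets that $A\mapsto\wend(w,A)$, and therefore $A\mapsto\mcr{\wend(w,A)}$, is injective on $\A_2(w,\Gamma(k_1))$). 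The step I expect to require the most care is the monotonicity claim for positional values along Bruhat chains supported on roots straddling $k_1$, together with tracking the degenerate case $k_2+1=n$ where $\sigma_2$ collapses to the identity.
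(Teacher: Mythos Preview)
Your proof is correct and follows essentially the same route as the paper: write $A=A_0\sqcup\{(k_1,k_2+1)\}$, observe $y=\wend(w,A_0)\in W^J$, establish $y(k_2+1)<y(1)$ and $y(n)<y(k_1)$, deduce the block-wise chains for $z$, and conclude that $\mcr{\wend(w,A)}=\wend(w,A)\sigma_1\sigma_2$ for fixed $\sigma_1,\sigma_2$ independent of $A$, then invoke Remark~\ref{rem:Chevalley_cancellation-free}. Your monotonicity argument for the inequalities (values at positions $\le k_1$ weakly increase and those at positions $>k_1$ weakly decrease along Bruhat steps labeled by roots of $\Gamma(k_1)$) spells out what the paper leaves implicit with ``Hence we see that\ldots'', and your final step---deducing equal signs from $\wend(w,A)=\wend(w,B)$ rather than asserting injectivity of $\wend$ outright---is a cleaner way to reach the no-cancellation conclusion.
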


\begin{rema} \label{rem:2-step_deformation_A2}
If $\A_{2}(w, \Gamma(k_{1})) \not= \emptyset$ (or equivalently, if condition (Q) holds), then equation~\eqref{eq:A2} shows that 
\begin{align}
& \be^{w\varpi_{k_{1}}} \sum_{A \in \A_{2}(w, \Gamma(k_{1}))} (-1)^{|A|} Q_{k_{1}}Q_{k_{2}} [\cO^{\mcr{\wend(w, A)}}] \\ 
&= -\be^{w\varpi_{k_{1}}} \sum_{A \in \A_{\lessdot}(w, \Gamma(k_{1}))} (-1)^{|A|} Q_{k_{1}}Q_{k_{2}} [\cO^{\mcr{\wend(w, A)(k_{1}, k_{2}+1)}}]\,. 
\end{align}
\end{rema}

\begin{proof}[Proof of Theorem~\ref{thm:2-step_3}\,(2)\,(b)]
Since $w(1) < w(k_{1}+1)$, Remark~\ref{rem:2-step_cancellation_A1} implies that 
\begin{equation}
\be^{w\varpi_{k_{1}}} \sum_{A \in \A_{1}(w, \Gamma(k_{1}))} (-1)^{|A|} Q_{k_{1}} [\cO^{\mcr{\wend(w, A)}}] = 0\,. 
\end{equation}
Hence, by Lemmas~\ref{lem:2-step_cancellation-free_Bruhat}, \ref{lem:2-step_cancellation_3-2-2-A3}, \ref{lem:2-step_cancellation-free_3-2-2-A2}, and Remark~\ref{rem:2-step_deformation_A2}, we obtain the desired cancellation-free formula. 
\end{proof}

It only remains to prove Theorem~\ref{thm:2-step_3}\,(3). To do so, we assume condition (Full). By the same argument as in the proof of Lemma~\ref{lem:2-step_cancellation-free_3-2-2-A2}, we see that for $A \in \A_{2}(w, \Gamma(k_{1}))$, $\mcr{\wend(w, A)} = \wend(w, A)\sigma_{1}\sigma_{2}$, where $\sigma_{1}$ and $\sigma_{2}$ are the cyclic permutations defined above (if $k_{1} = 1$, then we take $\sigma_{1} := e$). 
In addition, since $w(k_{1}+1) < w(1)$ by condition (Full)\,(2), it follows that $\wend(w, A)(k_{1}) < \wend(w, A)(k_{1}+1) < \wend(w, A)(1)$
(if $k_{1} = 1$, then we need only the inequality $\wend(w, A)(k_{1}+1) < \wend(w, A)(1)$). 
Therefore, for $A \in \A_{3}(w, \Gamma(k_{1}))$ (note that $A \setminus \{(k_{1}, k_{1}+1)\} \in \A_{2}(w, \Gamma(k_{1}))$), the following hold: 
\begin{itemize}
\item $\wend(w, A)(k_{1}) < \wend(w, A)(1) < \wend(w, A)(2) < \cdots < \wend(w, A)(k_{1}-1)$, 
\item $\wend(w, A)(k_{1}+1) < \wend(w, A)(k_{1}+2) < \cdots < \wend(w, A)(k_{2})$, and 
\item $\wend(w, A)(k_{2}+2) < \wend(w, A)(k_{2}+3) < \cdots < \wend(w, A)(n) < \wend(w, A)(k_{2}+1)$. 
\end{itemize}
Thus we conclude that $\mcr{\wend(w, A)} = \wend(w, A)\sigma_{1}\sigma_{2}$, and hence that for $A, B \in \A_{2}(w, \Gamma(k_{1})) \sqcup \A_{3}(w, \Gamma(k_{1}))$ with $A \not= B$, 
\begin{equation}
\mcr{\wend(w, A)} = \wend(w, A)\sigma_{1}\sigma_{2} \not= \wend(w, B)\sigma_{1}\sigma_{2} = \mcr{\wend(w, B)}\,. 
\end{equation}
This, together with Remark~\ref{rem:Chevalley_cancellation-free}, proves the following. 

\begin{lem} \label{lem:2-step_cancellation-free_3-3-A23}
Assume conditions \textup{(Q)} and \textup{(Full)}. Then the sum 
\begin{equation}
\be^{w\varpi_{k_{1}}} \sum_{A \in \A_{2}(w, \Gamma(k_{1}))} (-1)^{|A|} Q_{k_{1}}Q_{k_{2}} [\cO^{\mcr{\wend(w, A)}}] + \be^{w\varpi_{k_{1}}} \sum_{A \in \A_{3}(w, \Gamma(k_{1}))} (-1)^{|A|} Q_{k_{1}}Q_{k_{2}} [\cO^{\mcr{\wend(w, A)}}]
\end{equation}
is cancellation-free. 
\end{lem}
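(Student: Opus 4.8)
The plan is to reduce the statement to the cancellation-free property of the $G/B$ Chevalley formula \eqref{qkchev-f}, as recorded in Remark~\ref{rem:Chevalley_cancellation-free}, using the explicit description of $\mcr{\wend(w,A)}$ obtained in the paragraph preceding the statement. First I would note that every admissible subset occurring in the two displayed sums lies in $\A_{2}(w,\Gamma(k_{1})) \sqcup \A_{3}(w,\Gamma(k_{1}))$, and that on this set two pieces of data are constant: $\wt(w,A) = -w\varpi_{k_{1}}$, since every affine reflection attached to a root of $\Gamma(k_{1})$ fixes $\varpi_{k_{1}}$ (exactly as in the proof of Theorem~\ref{qkchev-a}); and $Q^{[\dn(w,A)]^{J}} = Q_{k_{1}}Q_{k_{2}}$, by the bullet list in Section~\ref{subsec:lemmas}. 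Hence, after expanding through $\Phi_{J}$, both sums together are a combination of the classes $[\cO^{\mcr{\wend(w,A)}}]$, all carrying the common factor $\be^{w\varpi_{k_{1}}}Q_{k_{1}}Q_{k_{2}}$ and the sign $(-1)^{|A|}$, so it suffices to prove that $\mcr{\wend(w,A)} = \mcr{\wend(w,B)}$ implies $(-1)^{|A|} = (-1)^{|B|}$ for $A, B \in \A_{2}(w,\Gamma(k_{1})) \sqcup \A_{3}(w,\Gamma(k_{1}))$.

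The key step is to invoke the computation carried out just above the statement: under conditions (Q) and (Full), for \emph{every} $A \in \A_{2}(w,\Gamma(k_{1})) \sqcup \A_{3}(w,\Gamma(k_{1}))$ one has the identity $\mcr{\wend(w,A)} = \wend(w,A)\,\sigma_{1}\sigma_{2}$, where $\sigma_{1} = (1 \ k_{1} \ k_{1}-1 \ \cdots \ 2)$ and $\sigma_{2} = (k_{2}+1 \ k_{2}+2 \ \cdots \ n)$ are fixed cyclic permutations independent of $A$ (with the degenerate conventions $\sigma_{1} := e$ if $k_{1} = 1$ and $\sigma_{2} := e$ if $k_{2}+1 = n$). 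Since right multiplication by the fixed permutation $\sigma_{1}\sigma_{2}$ is a bijection of $S_{n}$, the equality $\mcr{\wend(w,A)} = \mcr{\wend(w,B)}$ forces $\wend(w,A) = \wend(w,B)$. Then $\Pi(w,A)$ and $\Pi(w,B)$ are two directed paths in $\QB(W)$ from $w$ to this common endpoint, and $|A| = \ell(\Pi(w,A))$, $|B| = \ell(\Pi(w,B))$; by the parity property of $\QB(W)$ cited in Remark~\ref{rem:Chevalley_cancellation-free}, these lengths have the same parity, so $(-1)^{|A|} = (-1)^{|B|}$. This shows no two terms cancel, i.e., the sum is cancellation-free.

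The bookkeeping showing that $\wt$ and the Novikov monomial are constant across $\A_{2}(w,\Gamma(k_{1})) \sqcup \A_{3}(w,\Gamma(k_{1}))$ is routine, and the reduction to Remark~\ref{rem:Chevalley_cancellation-free} is formal. The substance lies in the chain of inequalities already supplied before the statement: using condition (Full)\,(2) together with (Q) one checks that, for $A \in \A_{3}(w,\Gamma(k_{1}))$, the word $\wend(w,A)$ has the same increasing pattern on positions $1,\ldots,k_{1}$ (with $\wend(w,A)(k_{1})$ smallest) and on positions $k_{2}+1,\ldots,n$ (with $\wend(w,A)(k_{2}+1)$ largest) as the corresponding word with the step $(k_{1},k_{1}+1)$ removed, so that the single pair $\sigma_{1},\sigma_{2}$ computes the minimal coset representative throughout both $\A_{2}$ and $\A_{3}$. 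I expect this to be the main obstacle: one must verify that adjoining the Bruhat step $(k_{1},k_{1}+1)$, which transposes the entries in positions $k_{1}$ and $k_{1}+1$, does not disturb the relative order of the entries across the three blocks $[1,k_{1}]$, $[k_{1}+1,k_{2}]$, $[k_{2}+1,n]$ in a way that would change the correcting permutation.
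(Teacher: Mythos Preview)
Your proposal is correct and follows essentially the same approach as the paper. Both arguments hinge on the fact, established in the paragraph preceding the lemma, that $\mcr{\wend(w,A)} = \wend(w,A)\,\sigma_{1}\sigma_{2}$ for a fixed pair $\sigma_{1},\sigma_{2}$ independent of $A \in \A_{2}(w,\Gamma(k_{1})) \sqcup \A_{3}(w,\Gamma(k_{1}))$, and then reduce to the parity property of paths in $\QB(W)$ recorded in Remark~\ref{rem:Chevalley_cancellation-free}.
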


\begin{rema} \label{rem:2-step_deformation_A3}
If $\A_{3}(w, \Gamma(k_{1})) \not= \emptyset$ (or equivalently, if condition (Q) holds), then equation~\eqref{eq:A3} shows that 
\begin{align}
&\be^{w\varpi_{k_{1}}} \sum_{A \in \A_{3}(w, \Gamma(k_{1}))} (-1)^{|A|} Q_{k_{1}}Q_{k_{2}} [\cO^{\mcr{\wend(w, A)}}] \\ 
&= \be^{w\varpi_{k_{1}}} \sum_{A \in \A_{\lessdot}(w, \Gamma(k_{1}))} (-1)^{|A|} Q_{k_{1}}Q_{k_{2}} [\cO^{\mcr{\wend(w, A)(k_{1}, k_{2}+1)s_{k_{1}}}}]\,. 
\end{align}
Therefore, by Remark~\ref{rem:2-step_deformation_A2}, we deduce that 
\begin{align}
& \be^{w\varpi_{k_{1}}} \sum_{A \in \A_{2}(w, \Gamma(k_{1}))} (-1)^{|A|} Q_{k_{1}}Q_{k_{2}} [\cO^{\mcr{\wend(w, A)}}] + \be^{w\varpi_{k_{1}}} \sum_{A \in \A_{3}(w, \Gamma(k_{1}))} (-1)^{|A|} Q_{k_{1}}Q_{k_{2}} [\cO^{\mcr{\wend(w, A)}}] \\ 
&= - \be^{w\varpi_{k_{1}}} \sum_{A \in \A_{\lessdot}(w, \Gamma(k_{1}))} (-1)^{|A|} Q_{k_{1}}Q_{k_{2}} \left( [\cO^{\mcr{\wend(w, A)(k_{1}, k_{2}+1)}}] - [\cO^{\mcr{\wend(w, A)(k_{1}, k_{2}+1)s_{k_{1}}}}] \right)\,. 
\end{align}
\end{rema}

\begin{proof}[Proof of Theorem~\ref{thm:2-step_3}\,(3)]
The desired identity follows from Lemmas~\ref{lem:2-step_cancellation-free_Bruhat}, \ref{lem:2-step_cancellation-free_3-3-A23} and Remarks~\ref{rem:2-step_cancellation-free_A1}, \ref{rem:2-step_deformation_A1}, \ref{rem:2-step_deformation_A3}. 
\end{proof}

\subsection{The positivity property}
We prove the positivity property of structure constants for two-step flag manifolds in type $A$, which is a corollary of Chevalley formulas (Theorems~\ref{thm:2-step_1}, \ref{thm:2-step_2}, \ref{thm:2-step_3}, \ref{thm:2-step_4}, \ref{thm:2-step_5}, and \ref{thm:2-step_6}). 
\begin{cor} \label{cor:positivity_2-step}
Let $G$ be of type $A_{n-1}$, $J = I \setminus \{k_{1}, k_{2}\}$ for arbitrarily fixed $1 \le k_{1} < k_{2} \le n-1$, and $k = k_{1}$ or $k = k_{2}$. 
Then, for $w, u \in W^{J}$ and $\xi \in Q_{I \setminus J}^{\vee, +}$, we have 
\begin{equation}
(-1)^{1 + \ell(w) + \ell(u) + \deg(Q^{\xi})}N_{s_{k}, w}^{u, \xi} \in \bZ_{\ge 0}[\be^{\gamma} - 1 \mid \gamma \in -\Delta].  
\end{equation}
\end{cor}
\begin{proof}
We give a proof of Corollary~\ref{cor:positivity_2-step} under the assumptions of Theorems~\ref{thm:2-step_1}, \ref{thm:2-step_2} and \ref{thm:2-step_3}. 
The positivity property under the assumptions of Theorems~\ref{thm:2-step_4}, \ref{thm:2-step_5}, and \ref{thm:2-step_6} follows by the same arguments as those for Theorems~\ref{thm:2-step_1}, \ref{thm:2-step_2} and \ref{thm:2-step_3}. 
Note that the positivity property under the assumptions of Theorems~\ref{thm:2-step_1}, \ref{thm:2-step_2}\,(1), \ref{thm:2-step_3}\,(1)\,(a) and (2)\,(a) has already been known because of the positivity property of $N_{u, v}^{w, 0}$ for $u, v, w \in W^{J}$. 
First, it is easy to check (see, for example, \cite[Section~3.1.5, Exercise~4]{gwrep}) that 
\begin{equation}
2\rho_{J} = \sum_{i = 1}^{k_{1}-1} i (k_{1} - i) \alpha_{i} + \sum_{i = 1}^{k_{2} - k_{1} - 1} i (k_{2} - k_{1} - i) \alpha_{k_{1}+i} + \sum_{i = 1}^{n - k_{2} - 1} i (n - k_{2} - i) \alpha_{k_{2}+i}. 
\end{equation}
Since 
\begin{equation}
\pair{\alpha_{i}}{\alpha_{j}^{\vee}} = \begin{cases}
2 & \text{if $i = j$}, \\ 
-1 & \text{if $|i - j| = 1$}, \\ 
0 & \text{otherwise}, 
\end{cases}
\end{equation}
we have 
\begin{align}
2\pair{\rho_{J}}{\alpha_{k_{1}}^{\vee}} &= -(k_{1} - 1) - (k_{2} - k_{1} - 1) = 2 - k_{2}, \\ 
2\pair{\rho_{J}}{\alpha_{k_{2}}^{\vee}} &= -(k_{2} - k_{1} - 1) - (n - k_{2} - 1) = 2 - n + k_{1}. 
\end{align}
In addition, we know that $2\pair{\rho}{\alpha_{k_{1}}^{\vee}} = 2$. Therefore, 
\begin{align}
\deg(Q_{k_{1}}) &= 2\pair{\rho - \rho_{J}}{\alpha_{k_{1}}^{\vee}} = -k_{2} \\ 
\deg(Q_{k_{2}}) &= 2\pair{\rho - \rho_{J}}{\alpha_{k_{2}}^{\vee}} = n - k_{1}, 
\end{align}
and hence 
\begin{align}
\deg(\underbrace{Q_{k_{1}}Q_{k_{2}}}_{= Q^{\alpha_{k_{1}}^{\vee} + \alpha_{k_{2}}^{\vee}}}) = 2\pair{\rho - \rho_{J}}{\alpha_{k_{1}}^{\vee} + \alpha_{k_{2}}^{\vee}} = n - k_{1} + k_{2}. 
\end{align}

Let us consider the structure constants $N_{s_{k_{1}}, w}^{u, \xi}$ with $\xi \not= 0$ under the assumptions of Theorems~\ref{thm:2-step_2}\,(2) and \ref{thm:2-step_3}\,(1)\,(b). 
We maintain the setting of Lemma~\ref{lem:2-step_cancellation-free_2-2} except for the negation of condition (Q) (see Remark~\ref{rem:2-step_cancellation-free_A1}). 
Take $A \in \A_{1}(w, \Gamma(k_{1}))$, and set $u := \mcr{\wend(w, A)}$, $u_{0} := \wend(w, A)$. 
Then, by the proof of Lemma~\ref{lem:2-step_cancellation-free_2-2}, we have $u = u_{0}\sigma_{1}\sigma_{2}$, where 
$\sigma_{1} = (1, k_{1}, k_{1}-1, \ldots, 2)$ and $\sigma_{2} = (k_{1}+1, k_{1}+2, \ldots, k_{2})$. 
Therefore, we see that 
\begin{align}
& (-1)^{|A|} \be^{w\vpi_{k_{1}}} Q_{k_{1}} [\cO^{\mcr{\wend(w, A)}}] \\ 
&= (-1)^{\ell(u_{0}) - \ell(w)} \be^{w\vpi_{k_{1}}} Q_{k_{1}} [\cO^{u}] \\ 
&= (-1)^{(\ell(u_{0}\sigma_{1}\sigma_{2}) - \ell(\sigma_{1}) - \ell(\sigma_{2})) - \ell(w)} \be^{w\vpi_{k_{1}}} Q_{k_{1}} [\cO^{u}] \\ 
&= (-1)^{\ell(u) - (k_{1} - 1) - (k_{2} - k_{1} - 1) - \ell(w)} \be^{w\vpi_{k_{1}}} Q_{k_{1}} [\cO^{u}] \\ 
&= (-1)^{\ell(w) + \ell(u) + k_{2}} \be^{w\vpi_{k_{1}}} Q_{k_{1}} [\cO^{u}] \\ 
&= (-1)^{\ell(w) + \ell(u) + \deg(Q_{k_{1}})} \be^{w\vpi_{k_{1}}} Q_{k_{1}} [\cO^{u}]. 
\end{align}
We set 
\begin{equation}
\A_{1}(w, \Gamma(k_{1}))_{u} := \{ A \in \A_{1}(w, \Gamma(k_{1})) \mid \mcr{\wend(w, A)} = u \}
\end{equation}
for $u \in W^{J}$. Then, for $u \in W^{J}$, we deduce from Theorems~\ref{thm:2-step_2}\,(2) and \ref{thm:2-step_3}\,(1)\,(b) that 
\begin{equation}
C_{w}^{u, \alpha_{k_{1}}^{\vee}} = \be^{w\vpi_{k_{1}}} \sum_{A \in \A_{1}(w, \Gamma(k_{1}))_{u}} (-1)^{|A|} = (-1)^{\ell(u) + \ell(w) + \deg(Q_{k_{1}})} |\A_{1}(w, \Gamma(k_{1}))_{u}| \be^{w\vpi_{k_{1}}}, 
\end{equation}
and hence that 
\begin{equation}
N_{s_{k_{1}}, w}^{u, \alpha_{k_{1}}^{\vee}} = (-1)^{1 + \ell(w) + \ell(u) + \deg(Q_{k_{1}})} |\A_{1}(w, \Gamma(k_{1}))_{u}| \be^{w\vpi_{k_{1}} - \vpi_{k_{1}}}. 
\end{equation}
Since $w\vpi_{k_{1}} - \vpi_{k_{1}} \in -Q^{+}$ and hence $\be^{w\vpi_{k_{1}} - \vpi_{k_{1}}} \in \bZ_{\ge 0}[\be^{\gamma} - 1 \mid \gamma \in -\Delta]$, we conclude that 
\begin{equation} \label{eq:positivity_2-step_1}
(-1)^{1 + \ell(w) + \ell(u) + \deg(Q_{k_{1}})} N_{s_{k_{1}}, w}^{u, \alpha_{k_{1}}^{\vee}} \in \bZ_{\ge 0}[\be^{\gamma} - 1 \mid \gamma \in -\Delta], 
\end{equation}
as desired. 
Equation~\eqref{eq:positivity_2-step_1}, together with the positivity property of $N_{u, v}^{w, 0}$ for $u, v, w \in W^{J}$, implies Corollary~\ref{cor:positivity_2-step} under the assumptions of Theorems~\ref{thm:2-step_2}\,(2) and \ref{thm:2-step_3}\,(1)\,(b). 

Next, we consider the structure constants $N_{s_{k_{1}}, w}^{u, \xi}$ with $\xi \not= 0$ under the assumption of Theorem~\ref{thm:2-step_3}\,(2)\,(b). 
We maintain the setting of Lemma~\ref{lem:2-step_cancellation-free_3-2-2-A2}. Take $A \in \A_{2}(w, \Gamma(k_{1}))$, and set 
$u := \mcr{\wend(w, A)}$, $u_{0} := \wend(w, A)$. 
Then, by the proof of Lemma~\ref{lem:2-step_cancellation-free_3-2-2-A2}, we have $u = u_{0}\sigma_{1}\sigma_{2}$, where 
$\sigma_{1} = (1, k_{1}, k_{1}-1, \ldots, 2)$ and $\sigma_{2} = (k_{2}+1, k_{2}+2, \ldots, n)$. 
Therefore, we see that 
\begin{align}
&(-1)^{|A|} \be^{w\vpi_{k_{1}}} Q_{k_{1}}Q_{k_{2}} [\cO^{\mcr{\wend(w, A)}}] \\ 
&= (-1)^{\ell(u_{0}) - \ell(w)} \be^{w\vpi_{k_{1}}} Q_{k_{1}}Q_{k_{2}} [\cO^{u}] \\ 
&= (-1)^{(\ell(u_{0}\sigma_{1}\sigma_{2}) - \ell(\sigma_{1}) - \ell(\sigma_{2})) - \ell(w)} \be^{w\vpi_{k_{1}}} Q_{k_{1}}Q_{k_{2}} [\cO^{u}] \\ 
&= (-1)^{\ell(u) - (k_{1}-1) - (n-k_{2}-1) - \ell(w)} \be^{w\vpi_{k_{1}}} Q_{k_{1}}Q_{k_{2}} [\cO^{u}] \\ 
&= (-1)^{\ell(w) + \ell(u) + (n - k_{1} + k_{2})} \be^{w\vpi_{k_{1}}} Q_{k_{1}}Q_{k_{2}} [\cO^{u}] \\ 
&= (-1)^{\ell(w) + \ell(u) + \deg(Q_{k_{1}}Q_{k_{2}})} \be^{w\vpi_{k_{1}}} Q_{k_{1}}Q_{k_{2}} [\cO^{u}]. 
\end{align}
We set 
\begin{equation}
\A_{2}(w, \Gamma(k_{1}))_{u} := \{ A \in \A_{2}(w, \Gamma(k_{1})) \mid \mcr{\wend(w, A)} = u \}
\end{equation}
for $u \in W^{J}$. Then, for $u \in W^{J}$, we deduce from Theorem~\ref{thm:2-step_3}\,(2)\,(b) that 
\begin{equation}
C_{w}^{u, \alpha_{k_{1}}^{\vee} + \alpha_{k_{2}}^{\vee}} = \be^{w\vpi_{k_{1}}} \sum_{A \in \A_{2}(w, \Gamma(k_{1}))_{u}} (-1)^{|A|} = (-1)^{\ell(w) + \ell(u) + \deg(Q_{k_{1}}Q_{k_{2}})} |\A_{2}(w, \Gamma(k_{1}))_{u}| \be^{w\vpi_{k_{1}}}, 
\end{equation}
and hence that 
\begin{equation} \label{eq:positivity_2-step_2_structure_constant}
N_{s_{k_{1}}, w}^{u, \alpha_{k_{1}}^{\vee} + \alpha_{k_{2}}^{\vee}} = (-1)^{1 + \ell(w) + \ell(u) + \deg(Q_{k_{1}}Q_{k_{2}})} |\A_{2}(w, \Gamma(k_{1}))_{u}| \be^{w\vpi_{k_{1}} - \vpi_{k_{1}}}.
\end{equation}
Again, since $\be^{w\vpi_{k_{1}} - \vpi_{k_{1}}} \in \bZ_{\ge 0}[\be^{\gamma} - 1 \mid \gamma \in -\Delta]$, we conclude that 
\begin{equation} \label{eq:positivity_2-step_2}
(-1)^{1 + \ell(w) + \ell(u) + \deg(Q_{k_{1}}Q_{k_{2}})} N_{s_{k_{1}}, w}^{u, \alpha_{k_{1}}^{\vee} + \alpha_{k_{2}}^{\vee}} \in \bZ_{\ge 0}[\be^{\gamma} - 1 \mid \gamma \in -\Delta], 
\end{equation}
as desired. 
Equation~\eqref{eq:positivity_2-step_2}, together with the positivity property of $N_{u, v}^{w, 0}$ for $u, v, w \in W^{J}$, implies Corollary~\ref{cor:positivity_2-step} under the assumption of Theorem~\ref{thm:2-step_3}\,(2)\,(b). 

It remains to consider the structure constants $N_{s_{k_{1}}, w}^{u, \xi}$ with $\xi \not= 0$ under the assumption of Theorem~\ref{thm:2-step_3}\,(3) and consider the structure constants $N_{s_{k_{1}}, w}^{u, \xi}$ for $\xi \not= 0$. 
The positivity property in the case $\xi = \alpha_{k_{1}}^{\vee}$ has already been proved by equation \eqref{eq:positivity_2-step_1}. 
Hence it suffices to consider the case $\xi = \alpha_{k_{1}}^{\vee} + \alpha_{k_{2}}^{\vee}$. 
We maintain the setting of Lemma~\ref{lem:2-step_cancellation-free_3-3-A23}. 
We set 
\begin{equation}
\A_{23}(w, \Gamma(k_{1}))_{u} := \{ A \in \A_{2}(w, \Gamma(k_{1})) \sqcup \A_{3}(w, \Gamma(k_{1})) \mid \mcr{\wend(w, A)} = u \}
\end{equation}
for $u \in W^{J}$. Then, by the same argument as that for equation \eqref{eq:positivity_2-step_2_structure_constant}, we deduce from Theorem~\ref{thm:2-step_3}\,(3) that 
\begin{equation}
N_{s_{k_{1}}, w}^{u, \alpha_{k_{1}}^{\vee} + \alpha_{k_{2}}^{\vee}} = (-1)^{1 + \ell(w) + \ell(u) + \deg(Q_{k_{1}}Q_{k_{2}})} |\A_{23}(w, \Gamma(k_{1}))_{u}| \be^{w\vpi_{k_{1}} - \vpi_{k_{1}}}, 
\end{equation}
and hence conclude that 
\begin{equation} \label{eq:positivity_2-step_3}
(-1)^{1 + \ell(w) + \ell(u) + \deg(Q_{k_{1}}Q_{k_{2}})} N_{s_{k_{1}}, w}^{u, \alpha_{k_{1}}^{\vee} + \alpha_{k_{2}}^{\vee}} \in \bZ_{\ge 0}[\be^{\gamma} - 1 \mid \gamma \in -\Delta], 
\end{equation}
as desired. Equations~\eqref{eq:positivity_2-step_1}, \eqref{eq:positivity_2-step_3}, together with the positivity property of $N_{u, v}^{w, 0}$ for $u, v, w \in W^{J}$, implies Corollary~\ref{cor:positivity_2-step} under the assumption of Theorem~\ref{thm:2-step_3}\,(3). 
This completes the proof of the corollary. 
\end{proof}

\appendix

\section{Another proof of the existence of the multiplicative surjection $\Phi_{J}$} \label{sec:A}
In this appendix, we mainly use the notation of Section~\ref{sec:qk}. 
In addition, we set $QK_{T}^{\mathrm{poly}}(G/B) := K_{T}(G/B) \otimes_{\bZ[\Lambda]} \bZ[\Lambda][Q]$, 
where $\bZ[\Lambda][Q]$ is the polynomial ring with coefficients in $\bZ[\Lambda]$ in the (Novikov) variables $Q_i = Q^{\alpha_i^{\vee}}$, $i \in I$; 
also, for an arbitrary subset $J \subset I$, we set 
$QK_{T}^{\mathrm{poly}}(G/P_{J}) := K_{T}(G/P_{J}) \otimes_{\bZ[\Lambda]} \bZ[\Lambda][Q_{K}]$, 
with $K := I \setminus J$, where $\bZ[\Lambda][Q_{K}]$ is the polynomial ring with coefficients in $\bZ[\Lambda]$ in the variables $Q_{k}$, $k \in K$. 
It is known (see \cite{katqkg}) that there exists a surjective $\mathbb{Z}[\Lambda]$-algebra homomorphism $\Phi_{J}$ from $QK_{T}^{\mathrm{poly}}(G/B)$ onto $QK_{T}^{\mathrm{poly}}(G/P_{J})$ such that $\Phi_{J}(Q^{\xi}[\mathcal{O}^{w}]) = Q^{[\xi]^J}[\mathcal{O}_{J}^{\lfloor w \rfloor}]$ for $w \in W$ and $\xi \in Q^{\vee,+}$, where $[\xi]^{J} := \sum_{k \in I \setminus J} c_{k} \alpha_{k}^{\vee}$ for $\xi = \sum_{i \in I} c_{i} \alpha_{i}^{\vee} \in Q^{\vee,+}$.
In this appendix, based on results in \cite{newprf}, we give another (short) proof of the existence of such a $\bZ[\Lambda]$-algebra homomorphism. 
First of all, we note that $QK_{T}^{\mathrm{poly}}(G/B)$ is a $\bZ[\Lambda]$-subalgebra of $QK_{T}(G/B) = K_{T}(G/B) \otimes_{\bZ[\Lambda]} \bZ[\Lambda][\![Q]\!]$ 
by \cite[Corollary~1.2]{newprf}. 

Let us briefly recall the main result of \cite{newprf}. 
Following \cite{newprf}, let $Gr_{G}$ denote Pressley-Segal's model of the 
affine Grassmannian associated to a simple and simply-connected complex Lie group $G$; more precisely, let $Gr_{G}$ be the space of polynomial based loops in a (fixed) maximal compact subgroup of $G$, equipped with an ind-variety structure  (see \cite[Chapter~8]{psloop} for details). 
We denote by $K^{T}(Gr_{G})$ the $T$-equivariant $K$-homology (in the topological sense) of the affine Grassmannian $Gr_{G}$, equipped with the Pontryagin product $\odot$ coming from the group product on the topological group $Gr_{G}$. 
Then, we have two bases. One is a basis (called the localization basis) $\mathcal{O}_{\xi} := [\mathcal{O}_{x_{\xi}}]$, $\xi \in Q^{\vee}$, of $K^{T}(Gr_{G})$ over $\mathrm{Frac}(\bZ[\Lambda])$, where $x_{\xi}$ is the $T$-fixed point of $Gr_{G}$ corresponding to the cocharacter of $T$ associated to $\xi \in Q^{\vee}$. 
More precisely, if we consider the $\bZ[\Lambda]$-algebra $\bigoplus_{\xi \in Q^{\vee}} \mathrm{Frac}(\bZ[\Lambda]) \mathcal{O}_{\xi}$ equipped with the product $\odot$ defined by $\mathcal{O}_{\xi_1} \odot \mathcal{O}_{\xi_2} := \mathcal{O}_{\xi_1 + \xi_2}$, $\xi_1, \xi_2 \in Q^{\vee}$, then we have an injective $\bZ[\Lambda]$-algebra homomorphism $K^{T}(Gr_{G}) \hookrightarrow \bigoplus_{\xi \in Q^{\vee}} \mathrm{Frac}(\bZ[\Lambda]) \mathcal{O}_{\xi}$ which fixes every $\mathcal{O}_{\xi}$. 
Another is indeed a basis of $K^{T}(Gr_{G})$ over $\bZ[\Lambda]$ given as follows. 
Let $W_{\mathrm{af}} = W \ltimes Q^{\vee}$ be the affine Weyl group of $G$, and let $W_{\mathrm{af}}^{-}$ denote the set of minimal-length coset representatives for $W_{\mathrm{af}}/W$. We know from \cite[Section 3]{lash} that an element $w t_{\xi} \in W_{\mathrm{af}}$, 
with $w \in W$ and $\xi \in Q^{\vee}$, lies in $W_{\mathrm{af}}^{-}$ if and only if $\xi \in Q^{\vee}$ is anti-dominant and $w$ is of minimal length in its coset $w W_{\xi}$ in $W/W_{\xi}$, where $W_{\xi} \subset W$ is the stabilizer of $\xi$ in $W$; note that if $\xi \in Q^{\vee}$ is anti-dominant, then $\xi \in - Q^{\vee,+}$. 
In particular, if $\xi \in Q^{\vee}$ is regular anti-dominant, then $w t_{\xi} \in W_{\mathrm{af}}^{0}$ for all $w \in W$. 
For each $w t_{\xi} \in W_{\mathrm{af}}^{0}$, there exists a complex cell (called an affine Schubert cell) in $Gr_{G}$ containing the $T$-fixed point $x_{w \xi} \in Gr_{G}$ of finite dimension; the class of the structure sheaf of the Zariski closure of this cell is denoted by $\mathcal{O}_{w t_{\xi}}$, and is called the affine Schubert class associated to $w t_{\xi} \in W_{\mathrm{af}}^{-}$. 
Then we know that the classes $\mathcal{O}_{w t_{\xi}}$, $w t_{\xi} \in W_{\mathrm{af}}^{-}$, form a $\bZ[\Lambda]$-basis of $K^{T}(Gr_{G})$. 

Now the main result of \cite{newprf} is stated as follows. 

\begin{thm}[{\cite[Theorem~1.1]{newprf}}] \label{thm:K-Peterson}
Let $J$ be an arbitrary subset of $I$. Then, 
there exist a $\mathbb{Z}[\Lambda]$-algebra homomorphism 
\begin{equation}
\Psi_{J} : K^{T}(Gr_{G}) \to QK_{T}(G/P_{J})[(Q^{\vee,+})^{-1}],
\end{equation}
where $QK_{T}(G/P_{J})[(Q^{\vee,+})^{-1}]$ denotes the localization of $QK_{T}(G/P_{J})$ with respect to the monomials in the Novikov variables $Q_i = Q^{\alpha_i^{\vee}}$, $i \in I$. 
Moreover, $\Psi_{J}(\mathcal{O}_{wt_{\xi}}) = Q^{[\xi]^{J}}[\mathcal{O}_{J}^{\lfloor w \rfloor}]$ for each $wt_{\xi} \in W_{\mathrm{af}}^{-}$, 
where $[\xi]^{J} = \sum_{k \in I \setminus J} c_{k} \alpha_{k}^{\vee}$ for $\xi = \sum_{i \in I} c_{i} \alpha_{i}^{\vee} \in Q^{\vee}$, 
and $[\mathcal{O}_{J}^{\lfloor w \rfloor}]$ denotes the (opposite) Schubert class in $K_{T}(G/P_{J})$ associated to the minimal-length coset representative 
$\lfloor w \rfloor \in W^{J}$ for the coset $w W_{J}$ in $W/W_{J}$. 
\end{thm}

Note that in the case $J = \emptyset$, i.e., $P_{J} = B$, the $\bZ[\Lambda]$-algebra homomorphism $\Psi := \Psi_{\emptyset}$ is injective since the affine Schubert classes $\mathcal{O}_{w t_{\xi}}$, $w t_{\xi} \in W_{\mathrm{af}}^{-}$, form a $\bZ[\Lambda]$-basis of $K^{T}(Gr_{G})$ and 
$\Psi([\mathcal{O}_{wt_{\xi}}]) = Q^{\xi}[\mathcal{O}^{w}]$. 

We will construct a surjective $\mathbb{Z}[\Lambda]$-algebra homomorphism $\Phi_{J}$ from $QK_{T}^{\mathrm{poly}}(G/B)$ to $QK_{T}^{\mathrm{poly}}(G/P_{J})$ such that $\Phi_{J}(Q^{\xi}[\mathcal{O}^{w}]) = Q^{[\xi]^J}[\mathcal{O}_{J}^{\lfloor w \rfloor}]$ for $w \in W$ and $\xi \in Q^{\vee,+}$, where $[\xi]^{J} = \sum_{k \in I \setminus J} c_{k} \alpha_{k}^{\vee}$ for $\xi = \sum_{i \in I} c_{i} \alpha_{i}^{\vee} \in Q^{\vee,+}$. 
We first note that for each element $v \in QK_{T}^{\mathrm{poly}}(G/B) = K_T(G/B) \otimes_{\bZ[\Lambda]} \bZ[\Lambda][Q]$, there exists a sufficiently regular anti-dominant coroot $\eta \in -Q^{\vee,+}$ such that $Q^{\eta} \, v \in QK_{T}(G/B)[(Q^{\vee,+})^{-1}]$ lies in the image of the map $\Psi$, i.e., 
$Q^{\eta} \, v \in \Psi(K^{T}(Gr_{G}))$; by the injectivity of $\Psi$, there exists a unique $u \in K^{T}(Gr_{G})$ such that $\Psi(u) = Q^{\eta} \, v$. 
Indeed, we may assume that $v = Q^{\xi} [\mathcal{O}^w]$ for some $w \in W$ and $\xi \in Q^{\vee,+}$ since each $v \in QK_{T}^{\mathrm{poly}}(G/B)$ is, by its definition, a finite linear combination with coefficients in $\bZ[\Lambda]$ of such elements. 
Hence we can take a sufficiently regular anti-dominant coroot $\eta \in Q^{\vee}$ such that $\xi + \eta \in Q^{\vee}$ is also regular anti-dominant; note that we have $\eta \in - Q^{\vee,+}$ since $\eta \in Q^{\vee}$ is anti-dominant. We set $u := \mathcal{O}_{w t_{\xi + \eta}}$, where $w t_{\xi + \eta}$ lies in $W_{\mathrm{af}}^{-}$ since $\xi + \eta \in Q^{\vee}$ is regular anti-dominant. Then it follows that $\Psi(u) = Q^{\eta} \, v$ by Theorem~\ref{thm:K-Peterson}. 
Now we define $\Phi_{J}(v) := Q^{[-\eta]^{J}} \Psi_{J}(u) \in QK_{T}(G/P_{J})$. We can easily verify that the element $Q^{[-\eta]^{J}} \Psi_{J}(u)$ does not depend on the choice of (a sufficiently regular anti-dominant coroot) $\eta \in -Q^{\vee,+}$, and hence that $\Phi_{J}$ is a well-defined surjective $\mathbb{Z}[\Lambda]$-module homomorphism from $QK_{T}^{\mathrm{poly}}(G/B)$ onto $QK_{T}^{\mathrm{poly}}(G/P_{J})$. 
Indeed, if $v = Q^{\xi} [\mathcal{O}^{w}]$ with $w \in W$ and $\xi \in Q^{\vee,+}$, then $\Phi_{J}(Q^{\xi} [\mathcal{O}^{w}]) = Q^{[\xi]^{J}}[\mathcal{O}_{J}^{\lfloor w \rfloor}]$. 

Also, for $v_1, v_2 \in QK_{T}^{\mathrm{poly}}(G/B)$, we can take sufficiently regular anti-dominant coroots $\eta_1, \eta_2 \in -Q^{\vee,+}$ such that $Q^{\eta_1} \, v_1, Q^{\eta_2} \, v_2 \in \Psi(K^{T}(Gr_{G}))$; hence there exist uniquely $u_1, u_2 \in K^{T}(Gr_{G})$ such that $\Psi(u_1) = Q^{\eta_1} \, v_1$ and $\Psi(u_2) = Q^{\eta_2} \, v_2$. 
Since $\Psi = \Psi_{\emptyset}$ is a $\bZ[\Lambda]$-algebra homomorphism, we have $Q^{\eta_1 + \eta_2}(v_1 \cdot v_2) = (Q^{\eta_1} \, v_1) \cdot (Q^{\eta_2} \, v_2) = \Psi(u_1) \cdot \Psi(u_2) = \Psi(u_1 \odot u_2)$ in $QK_{T}(G/B)[(Q^{\vee,+})^{-1}]$, 
where $u_1 \odot u_2 \in K^{T}(Gr_{G})$.
Therefore, we see that $\Phi_{J}(v_1 \cdot v_2) = Q^{[-\eta_1-\eta_2]^J} \Psi_{J}(u_1 \odot u_2) = Q^{[-\eta_1-\eta_2]^J} (\Psi_{J}(u_1) \cdot \Psi_{J}(u_2)) = 
(Q^{[-\eta_1]^J} \Psi_{J}(u_1)) \cdot (Q^{[-\eta_2]^J} \Psi_{J}) = \Phi_{J}(v_1) \cdot \Phi_{J}(v_2)$ in $QK_{T}(G/P_{J})[(Q^{\vee,+})^{-1}]$ 
since $\Psi_{J}$ is a $\bZ[\Lambda]$-algebra homomorphism. This proves that the map $\Phi_{J}$ is a $\bZ[\Lambda]$-algebra homomorphism from $QK_{T}^{\mathrm{poly}}(G/B)$ to $QK_{T}^{\mathrm{poly}}(G/P_{J})$, as desired. 

Finally, since $[\mathcal{O}^{s_i}] = 1 - \mathbf{e}^{-\varpi_i}[\mathcal{O}_{G/B}(-\varpi_i)]$ in $K_{T}(G/B)$ for all $i \in I$ and $[\mathcal{O}^{s_k}] = 1 - \mathbf{e}^{-\varpi_k}[\mathcal{O}_{G/P_{J}}(-\varpi_k)]$ in $K_{T}(G/P_{J})$ for all $k \in K = I \setminus J$, it follows that $\Phi_{J}([\mathcal{O}_{G/B}(-\varpi_k)]) = [\mathcal{O}_{G/P_{J}}(-\varpi_k)]$, and hence that $\Phi_{J}([\mathcal{O}^{w}] \cdot [\mathcal{O}_{G/B}(-\varpi_k)]) = \Phi_{J}([\mathcal{O}^{w}]) \cdot \Phi_{J}([\mathcal{O}_{G/B}(-\varpi_k)]) = [\mathcal{O}_{J}^{\lfloor w \rfloor}] \cdot [\mathcal{O}_{G/P_{J}}(-\varpi_k)]$ for all $k \in K = I \setminus J$. 

Thus, we have given a proof of the following fact; cf. Theorem~\ref{thm:qksurj}, due to Kato (\cite{katqkg}). 

\begin{cor}
Let $J$ be an arbitrary subset of $I$. Then, 
there exist a surjective $\mathbb{Z}[\Lambda]$-algebra homomorphism 
\begin{equation}
\Phi_{J} : QK_{T}^{\mathrm{poly}}(G/B) \to QK_{T}^{\mathrm{poly}}(G/P_{J})
\end{equation}
such that 
$\Phi_{J}(Q^{\xi}[\mathcal{O}^{w}]) = Q^{[\xi]^J}[\mathcal{O}_{J}^{\lfloor w \rfloor}]$ for $w \in W$ and $\xi \in Q^{\vee,+}$, where $[\xi]^{J} := \sum_{k \in I \setminus J} c_{k} \alpha_{k}^{\vee}$ for $\xi = \sum_{i \in I} c_{i} \alpha_{i}^{\vee} \in Q^{\vee,+}$, 
and $\lfloor w \rfloor \in W^{J}$ denotes the minimal-length coset representative for the coset $w W_{J}$ in $W/W_{J}$. 
Also, for each $k \in K = I \setminus J$, the following equality holds for all $w \in W$: 
\begin{equation}
\Phi_{J}([\mathcal{O}^{w}] \cdot [\mathcal{O}_{G/B}(-\varpi_k)]) = [\mathcal{O}_{J}^{\lfloor w \rfloor}] \cdot [\mathcal{O}_{G/P_{J}}(-\varpi_k)]. 
\end{equation}
\end{cor}

\section{Weihong Xu's conjecture about a cancellation-free parabolic Chevalley formula in type $A$ (with Weihong Xu)}

In this appendix, we mention the relation between our results and a conjecture due to Weihong~Xu, which is expected to be a cancellation-free Chevalley formula in type $A$ for an arbitrary subset $J \subset I$. 

Let $G$ be of type $A_{n-1}$. Take $1 \le k_{1} < k_{2} < \cdots < k_{m} \le n-1$, and set $J := I \setminus \{k_{1}, \ldots, k_{m}\}$. 
In this case, the partial flag manifold $G/P_{J}$ is isomorphic to the $m$-step flag manifold $\Fl(k_{1}, \ldots, k_{m}; n)$, defined as: 
\begin{equation}
\Fl(k_{1}, \ldots, k_{m}; n) := \left\{ (V_{1}, \ldots, V_{m}) \ \middle| \ \parbox{20em}{$V_{l}$, $l = 1, \ldots, m$, is a subspace of $\bC^{n}$ such that $\dim V_{l} = k_{l}$, and $V_{1} \subset V_{2} \subset \cdots \subset V_{m}$} \right\}. 
\end{equation}
For a directed path 
\begin{equation}
\bp: w_{0} \xrightarrow{\gamma_{1}} w_{1} \xrightarrow{\gamma_{2}} \cdots \xrightarrow{\gamma_{r}} w_{r}
\end{equation}
in $\QB(W)$, we define $\ell(\bp) \ge 0$, $\wend(\bp) \in W$, and $\wt(\bp) \in Q^{\vee, +}$ by 
\begin{align}
\ell(\bp) &:= r, \\ 
\wend(\bp) &:= w_{r}, \\ 
\wt(\bp) &:= \sum_{\substack{1 \le k \le r \\ w_{k-1} \rightarrow w_{k} \text{is a quantum edge}}} \gamma_{k}^{\vee}. 
\end{align}
Also, for $1 \le a \le n-1$, the quantum $a$-Bruhat graph $\QB_{a}(W)$ is defined to be the subgraph of $\QB(W)$ having only those edges whose labels are of the form $(i, j)$ such that $i \le a < j$. 
In addition, we define a total order $\vtl$ on $\Phi^{+}$ as follows: 
for $1 \le i < j \le n$ and $1 \le k < l \le n$, we define $(i, j) \vtl (k, l)$ if ($j > l$) or ($j = l$ and $i < k$). 

Xu formulated the following conjecture on a cancellation-free Chevalley formula for $QK(G/P_{J})$, the non-equivariant quantum $K$-theory of $G/P_{J}$, and checked it for all partial flag manifolds with \(n\leq 8\) and \(m\leq 4\) using a computer program.  
\begin{conj}[Weihong Xu] \label{conj:Xu}
In $QK(G/P_{J})$, for $w \in W^{J}$, the following cancellation-free formula holds:  
\begin{equation} \label{eq:Xu}
[\cO^{s_{k_{l}}}] \cdot [\cO^{w}] = \sum_{\bp} (-1)^{\ell(\bp) - 1} Q^{[\wt(\bp)]^{J}} [\cO^{\mcr{\wend(\bp)}}], 
\end{equation}
where the sum on the right-hand side is over all non-empty paths $\bp$ in $\QB_{k_{l}}(W)$ of the form 
\begin{equation}
\bp: w = w_{0} \xrightarrow{(i_{1}, j_{1})} w_{1} \xrightarrow{(i_{2}, j_{2})} \cdots \xrightarrow{(i_{r}, j_{r})} w_{r}
\end{equation}
such that 
\begin{enumerate}
\item $(i_{1}, j_{1}) \vtl (i_{2}, j_{2}) \vtl \cdots \vtl (i_{r}, j_{r})$, 
\item for each $0 \le t \le r$ (regarding as $k_{0} = 0$ and $k_{n+1} = n$) and an edge $v \xrightarrow{(i, j)} w$ in $\bp$, 
\begin{itemize}
\item there does not exist any paths of the form $v \xrightarrow{(i, j')} w'$ in $\QB_{k_{l}}(W)$ such that $k_{t}+1 \le j < j' \le k_{t+1}$, 
\item there does not exist any paths of the form $v \xrightarrow{(i', j)} w'$ in $\QB_{k_{l}}(W)$ such that $k_{t}+1 \le i' < i \le k_{t+1}$,
\end{itemize}
\item if there are two edges $\xrightarrow{(i, j)}$ and $\xrightarrow{(i, j')}$ in $\bp$ such that $(i, j) \vtl (i, j')$, then there exists $1 \le t \le n-1$ such that $j' \le k_{t} < j$, 
\item if there are two edges $\xrightarrow{(i, j)}$ and $\xrightarrow{(i', j)}$ in $\bp$ such that $(i, j) \vtl (i', j)$, then there exists $1 \le t \le n-1$ such that $i \le k_{t} < i'$. 
\end{enumerate}
\end{conj}

We now compare Xu's conjectural formula in the case $m = 2$ with our cancellation-free Chevalley formula for two-step flag manifolds. 
For $w \in W^{J}$, we obtain the following formula in $QK(G/P_{J})$ by applying the surjection $\Phi_{J}$ to equation~\eqref{qkchev-f} and specializing at $\be^{\mu} = 1$ for $\mu \in \Lambda$: 
\begin{align}
[\cO(-\vpi_{k_{1}})] \cdot [\cO^{w}] &= \sum_{A \in \A(w, \Gamma(k_{1}))} (-1)^{|A|} Q^{[\dn(w, A)]^{J}} [\cO^{\mcr{\wend(w, A)}}] \\ 
&= \sum_{\bp} (-1)^{\ell(\bp)} Q^{[\wt(\bp)]^{J}} [\cO^{\mcr{\wend(\bp)}}], 
\end{align}
where the sum $\sum_{\bp}$ is over all (possibly empty) directed paths in $\QB_{k_{1}}(W)$ satisfying (1) in Conjecture~\ref{conj:Xu}. 
By the formula $[\cO^{s_{k_{1}}}] = 1 - [\cO(-\vpi_{k_{1}})]$ in $QK(G/P_{J})$, we deduce that 
\begin{equation}
[\cO^{s_{k_{1}}}] \cdot [\cO^{w}] = \sum_{\bp} (-1)^{\ell(\bp)-1} Q^{[\wt(\bp)]^{J}} [\cO^{\mcr{\wend(\bp)}}], 
\end{equation}
where the sum $\sum_{\bp}$ is over all non-empty directed paths in $\QB_{k_{1}}(W)$ satisfying (1) in Conjecture~\ref{conj:Xu}. 
Here, we can construct certain involutions among non-empty directed paths satisfying (1) but not (2), and those satisfying (1) but not (3) or (4). 
Furthermore, we can verify that such involutions agree with those constructed in Section~\ref{sec:2-step} by direct calculation. 
Hence we that equation~\eqref{eq:Xu} coincides with our cancellation-free Chevalley formula (Theorems~\ref{thm:2-step_1}, \ref{thm:2-step_2}, and \ref{thm:2-step_3}). 
We can also consider the product $[\cO^{s_{k_{2}}}] \cdot [\cO^{w}]$ by using the diagram automorphism $\omega$ and the result above for the product $[\cO^{s_{k_{1}}}] \cdot [\cO^{w}]$. 
In addition, we can verify that Xu's conjectural formula \eqref{eq:Xu} coincides with our Chevalley formula for Grassmannians of type $A$ (Theorem~\ref{qkchev-a}) in the same way as above. 

\bibliographystyle{alpha}

\newcommand{\etalchar}[1]{$^{#1}$}

\end{document}